\renewcommand{\Sh}{\mathrm{Sh}}
\renewcommand{\Mod}{\mathrm{Mod}}
\newcommand{\musupp}{\mu\mathrm{supp}}
\newcommand{\Int}{\mathrm{Int}}
\newcommand{\CSh}{\mathrm{CSh}}
\newcommand{\comp}{\mathrm{c}}
\newcommand{\SQ}{\mathrm{SQ}}
\newcommand{\coim}{\mathrm{coim}}
\newcommand{\almost}{\mathrm{a}}
\title{Almost equivalences between \\ Tamarkin category and Novikov sheaves}
\author{Tatsuki Kuwagaki\footnote{Department of Mathematics, Kyoto University \\
Kitashirakawa Oiwakecho, Sakyo-ku, Kyoto 606-8502, Japan\\
Email: {tatsuki.kuwagaki@gmail.com}\\}}
\date{\today}
\begin{document}

\maketitle
\begin{abstract}
    We revisit the relationship between the Tamarkin's extra variable $\bR_t$ and Novikov rings. We prove that the equivariant version of Tamarkin category is almost equivalent (in the sense of almost mathematics) to the category of derived complete modules over the Novikov ring.
\end{abstract}
\section{Introduction}
\subsection{Main result}
In symplectic geometry, there exists a series of extra variables, which are (expected to be) just various aspects of the same variable.

\begin{enumerate}
    \item In microlocal sheaf theory, it is called $t$ and introduced by Tamarkin~\cite{Tam} to study sheaf quantizations, which was also envisioned by Sato in relation with WKB analysis~\cite{Sato}.
    \item In deformation quantization/WKB analysis/twistor theory, it is the Laplace dual of the inverse of $\hbar$. 
    \item In Floer theory, it is the exponent/valuation of the universal Novikov ring. 
\end{enumerate}

For example, in \cite{WKBkuw}, we discussed the relationship between (1) and (2); By using exact WKB analysis for $\hbar$-differential operators, we construct sheaf quantization. In the process, the value in (2) is identified with (1). Also, some hints of the relationship between (1) and (3) can be seen from the work of Tamarkin and \cite{AI, IK}; In \cite{AI}, the relation between symplectic energy and $t$ is explained, which is the same role as the Novikov variable. In \cite{IK}, this point of view is further expanded to develop a sheaf-theoretic construction parallel to Lagrangian Floer theory.

In this paper, we investigate the relationship between (1) and (3) further. 

We would like to formulate our main theorem now. Let $\bK$ be a field. Let $X$ be a manifold and $\bR_t$ be the real line. We denote the discrete additive group of the real numbers by $\bR_{d}$. We then have the equivariant derived category $\Sh^{\bR_d}(X\times \bR_t,\bK)$ of $\bK$-modules sheaves. We quotient this category by the sheaves with non-positive microsupport and denote it by $\Sh^{\bR_d}_{>0}(X\times \bR_t,\bK)$. The non-equivariant version of this category was originally introduced by Tamarkin~\cite{Tam}, and the equivariant version was later introduced by the present author~\cite{WKBkuw}. On the other hand, we have the universal Novikov ring $\Lambda_0$ over $\bK$. We denote the derived category of $\Lambda_0$-module sheaves by $\Sh(X, \Lambda_0)$.
\begin{theorem}
We have an almost embedding 
\begin{equation}
    \frakA \colon \Sh^{\bR_d}_{>0}(X\times \bR_t,\bK)\hookrightarrow_\almost\Sh(X,\Lambda_0).
\end{equation}
The almost images are sheaves valued in derived complete modules over $\Lambda_0$.
\end{theorem}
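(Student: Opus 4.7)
The natural definition is to build a relative-Hom functor out of the Tamarkin ``positive half-space'' sheaf. Concretely, let $p_M \colon M\times\bR_t\to M$ denote the projection and, for each $c\in\bR_{\geq 0}$, let $K_c := \bK_{M\times[c,\infty)}$. The restriction maps $K_c\to K_{c'}$ for $c\geq c'$ assemble (in the $\bR_d$-equivariant category) into a single object $\mathcal{K}$ carrying translation endomorphisms indexed by $\bR_{\geq 0}$. I would define
\[
    \frakA(F)\ :=\ R(p_M)_*\, R\mathcal{H}om(\mathcal{K}, F),
\]
equipped with the $\Lambda_0$-action obtained by completing the $\bK[\bR_{\geq 0}]$-action coming from the translation endomorphisms of $\mathcal{K}$ with respect to the Novikov filtration.

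The first step is to show $\frakA$ descends to $\mu(T^*M)$: if $\musupp(F)\subset\{\tau\leq 0\}$, then since $\musupp(\mathcal{K})\subset\{\tau\geq 0\}$, the standard microsupport Hom-vanishing estimates give $\frakA(F)=0$. The main step is almost full faithfulness: I would establish a natural isomorphism
\[
    R\mathrm{Hom}_{\mu(T^*M)}(F, G)\ \cong_{\almost}\ R\mathrm{Hom}_{\Lambda_0}\bigl(\frakA(F), \frakA(G)\bigr)
\]
by reducing the right-hand side to an $\bR_d$-equivariant Hom on $M\times\bR_t$ and comparing the $\bK[\bR_{\geq 0}]$-equivariant structure on the sheaf side with the $\mathfrak{m}$-adic topology on the $\Lambda_0$-module side, where $\mathfrak{m} = T^{\bR_{>0}}\Lambda_0$. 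The discrepancy between these two structures is exactly $\mathfrak{m}$-torsion; crucially, since $\mathfrak{m}$ is idempotent (as $T^{\epsilon}\cdot T^{\epsilon}=T^{2\epsilon}$ for every $\epsilon>0$), it vanishes in the almost category.

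The main obstacle is precisely this comparison: honest $\mathfrak{m}$-adic completion is lossy, and one must show that the loss lies entirely in $\mathfrak{m}$-torsion rather than in some nontrivial derived completion error. I anticipate that the cleanest route passes through an intermediate category of monoid-equivariant sheaves (or ``unrolled'' $\bR_{\geq 0}$-modules), in which both the Tamarkin side and the $\Lambda_0$-side can be represented uniformly \emph{before} completion, so that the almost comparison reduces to a clean statement about idempotence of $\mathfrak{m}$. Once almost full faithfulness is in hand, the essential image description follows: objects of the form $\frakA(F)$ are derived complete because they are intrinsically built as $R\lim$ over the Novikov filtration coming from the diagram $\{K_c\}_{c\geq 0}$; conversely, any derived complete $\Lambda_0$-module sheaf admits, almost, an $\bR_d$-equivariant lift to $M\times\bR_t$ via the inverse construction, thereby exhibiting it in the image of $\frakA$.
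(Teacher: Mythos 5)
Your overall strategy---define $\frakA$ by $\Hom$-ing against a Tamarkin kernel built from $\bK_{t\geq c}$ and show the discrepancy with the $\Lambda_0$-side is almost zero---is the same Morita-style idea the paper uses. The main structural difference is that you define $\frakA(F)=R(p_M)_*R\cHom(\mathcal{K},F)$ directly on $M\times\bR_t$, whereas the paper first proves the almost equivalence over a point, $\frakA\colon\mu^\bG(*)\to\Mod_\comp(L_0^\bG)$ (Theorem~\ref{thm:main}), and then globalizes via the Lurie tensor product $\mu^\bG(T^*M)\cong\Sh(M,\bK)\otimes\mu^\bG(*)$. That reduction is not cosmetic: it gives cocontinuity of the global functor for free and controls the essential image via the inclusion $\Mod_\comp(L_0^\bG)\hookrightarrow\Mod(L_0^\bG)$, neither of which is automatic for your direct $Rp_{M*}$-definition on a possibly noncompact $M$.

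The genuine gap is that the two load-bearing steps are asserted, not established. For almost full faithfulness you say the loss from $\frakm$-adic completion ``lies entirely in $\frakm$-torsion'' and hence vanishes because $\frakm$ is idempotent; but idempotence of $\frakm$ is what makes the almost quotient well-defined, not an argument that the particular error modules you meet are almost zero. The paper's Lemma~\ref{lem:almostisom1} supplies the actual content: it uses the triangle $\bK_{t\geq 0}\to\bK_\bR\to\bK_{t<0}$ to compute $H^0\End_{\mu^\bG(*)}(1_\mu)\cong\Lambda_0$ and shows $H^1$ is annihilated by $T^a$ for every $a>0$ by sliding the window $(-a,0)$, which is precisely why it is almost zero. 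Step~2 of Theorem~\ref{thm:main} then needs the sharper statement that $\Hom$ against $\bigoplus_c T_c 1_\mu$ turns arbitrary coproducts of shifted copies of $1_\mu$ into completed coproducts almost isomorphically---a further explicit cohomology computation not attempted in your proposal and not a formal consequence of idempotence. For the image, the claim that ``any derived complete $\Lambda_0$-module sheaf admits, almost, an $\bR_d$-equivariant lift via the inverse construction'' gives no such construction; the paper instead runs a generator argument in the style of \cite{SS} (conservativity from microsupport vanishing in Step~1, cocontinuity plus Step~2, then the standard triangle $A\to\cE\to B$), and proves derived completeness of the image separately in Lemma~\ref{lem:derivedcompleteness} by checking that the $T$-inverse-limit of $\Hom(\cE,\cF)$ vanishes using the internal $\cHom^{\star}$ and $\varinjlim_c\bK_{t\geq c}=0$---not just the informal remark that $\frakA$ is built from an $R\lim$ over the Novikov filtration.
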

Here the term ``almost" comes from the almost mathematics~\cite{GabberRamero}. Roughly speaking, the above theorem without ``almost" holds after negating almost zero modules. The precise meaning will be explained in the body of the paper. The use of the almost mathematics in symplectic geometry also happened in \cite{fukaya2021gromovhausdorff}.

In the body of paper, we will state a more generalized version of the theorem: Namely, for sheaves equivariant with respect to a subgroup $\bG\subset \bR$. In this version, the right hand side is a sheaf valued in modules over a certain completion of infinite versions of $A_n$-quiver algebra. For example, in the classical case, it is a certain completion of the poset $(\bR, \geq )$ used in the literature of persistent modules.

\begin{remark}
    Here let us introduce some previous works.
    \begin{enumerate}
        \item In \cite{WKBkuw}, we found the Novikov ring a posteriori after the construction of the equivariant version of Tamarkin's category. Namely, we showed that the said category is linear over the Novikov ring. In this paper, we show that Novikov sheaves and the equivariant Tamarkin category is actually (almost) equivalent, which is stronger that the former one. In particular, this equivalence allows us to have an alternative model for the category of sheaf quantizations.
        \item The holonomic Riemann--Hilbert correspondence by D'Agnolo--Kashiwara~\cite{DK} is formulated and proved by using a variant of Tamarkin category, called enhanced ind-sheaves. In \cite{IrregPerv}, we use the sheaves valued in modules over the finite Novikov ring $\bK[\bR_{\geq 0}]$ to reformulate the holonomic Riemann--Hilbert correspondence. One can reprove this result by using this paper with some understanding on the relationship between enhanced ind-sheaves and Tamarkin category. This will be further explained in \cite{KudomiKuwagaki}.
        \item After completing this manuscript, Bingyu Zhang informed the author that Vaintrob sketched a similar result for non-negatively microsupported sheaves in his unpublished manuscript~\cite{Vaintrob}.
    \end{enumerate}
\end{remark}

\subsection{Why is the theorem important?}
In this section, we explain the reason why the stated main theorem seems to be important.
\subsubsection{Symplectic geometry}
The original non-equivariant version of Tamarkin's category~\cite{Tam} is known to capture many symplectic geometric information of $T^*X$ and its Lagrangian submanifolds. Viterbo~\cite{Viterbo} envisioned that the Tamarkin category contains filtered Fukaya category of exact Lagrangians of $T^*X$, and constructed a baby version of the functor (a sheaf-theoretic construction of the expected images were constructed by Guillermou~\cite{Guillermou}). The idea is very canonical: Consider the family Floer functor with respect to the cotangent fibers, then one gets a sheaf on $X$. Since the Floer cohomology is filtered in the exact case, one can translate it to the additional $\bR$-direction of the Tamarkin category.

If one expects a generalization of the family Floer functor to the non-exact case, the functor should take its value in the category of the sheaves over the Novikov ring, since the Floer theory is defined over the Novikov ring~\cite{FOOO}, due to the infiniteness of the number of holomorphic disks.

On the other hand, in the non-exact case, the works of \cite{WKBkuw, IK} show that the equivariant Tamarkin category works well to treat non-exact Lagrangians. So, in particular, \cite{IK} conjectures that a version of non-exact Fukaya category of $T^*X$ is embedded to $\Sh^{\bR_d}_{>0}(X\times \bR_t,\bK)$. An integral version of the conjecture is proved in ~\cite{KPS} without using family Floer theory, but using \cite{GPS}.

To prove this conjecture by using the family Floer theory, one has to relate $\Sh^{\bR_d}_{>0}(X\times \bR_t,\bK)$ and the category of sheaves over the Novikov ring, which is the natural target of the family Floer functor. The main theorem explains the relationship precisely.

Aside from this application, we will explain two applications of (the philosophy) of the theorem. 

The first one is a first step toward nonconic microlocal sheaf theory of sheaves valued in modules over real valuation rings. Namely, for a manifold $X$, a real valuation ring $R$, and a sheaf $\cE$ of modules over $R$, we can define a subset $\musupp(\cE)\subset T^*M$ by interpreting Tamarkin's non-conic microsupport. This refines Kashiwara--Schapira's microsupport~\cite{KS}.

The second one is an introduction of curved sheaves and twisted sheaves. When we would like to relate sheaves with Floer theory, such notions are very natural, since Floer complex can be curved and allowed to be deformed by some bulk classes. 

\subsubsection{Algebraic asymptotic analysis}
We also discuss an application to asymptotic analysis. Let $f(\hbar)$ be a function which is asymptotically expandable toward $\hbar\to 0$ (say, from the real positive direction), namely it has an associated power series expansion. In this expansion, functions like $e^{-c/\hbar}$ ($c\in \bR_{>0}$) are ignored, since they have zero asymptotic expansion. However, sometimes, one can expand the function further into a formal power series of $e^{-c/\hbar}$, by seeing more precise asymptotics. The resulting expansion is called transseries, and by interpreting $e^{-c/\hbar}$ as $T^c$, the ring of transseries is a Novikov ring.

On the other hand, in \cite{WKBkuw}, the author constructed an object of $\Sh^{\bR_d}_{>0}(X\times \bR_t,\bK)$ from a second-order $\hbar$-differential equation over a Riemann surface, by using exact WKB analysis: From exact WKB analysis, one obtains a global version of Stokes matrices (called Voros matrices). Gluing by Voros matrices give an object of $\Sh^{\bR_d}_{>0}(X\times \bR_t,\bK)$.

The higher-order version of exact WKB analysis is not well-established yet, it has been expected that the resulting solutions and Stokes matrices are expressed by transseries. So, the natural place where we can find solutions would be sheaves over the Novikov ring. Then, to generalize the picture from \cite{WKBkuw} to the higher-order cases, we again need to relate $\Sh^{\bR_d}_{>0}(X\times \bR_t,\bK)$ and the category of sheaves over the Novikov ring, which is again the job of our main theorem in this paper.

\subsection*{Notation}
Let $\bK$ be a field. All the operations in this paper should be understood in the derived sense unless otherwise stated. For a $\bK$-algebra $A$, $\Mod(A)$ (resp. $\Mod^\heartsuit(A)$) is the derived $\infty$-category of right $A$-modules (resp. the abelian category of right $A$-modules).

\subsection*{Acknowledgment}
This work is supported by JSPS KAKENHI  Grant Numbers 22K13912, 23H01068, 20H01794, and 25K21660. I'd like to thank Yuichi Ike for related discussions. I also thank Tomohiro Asano, Takumi Arai, Bingyu Zhang, and anonymous referees for several helpful comments on earlier drafts.

\section{Novikov rings}
In this section, we define Novikov rings and explain some properties.

\subsection{Definition}
Let $\bR$ be the 1-dimensional Euclidean vector space. Let $\bG$ be a subgroup of $\bR$. Then $\bR_{\geq 0}\cap \bG$ has a semigroup structure with respect to the addition. We denote the corresponding polynomial ring by $\bK[\bR_{\geq 0}\cap \bG]$. We denote the indeterminate corresponding to $a\in\bR_{\geq 0}\cap \bG$ by $T^a$. Let $|\cdot|$ be the Euclidean norm of $\bR$. For $r\in \bR_{>0}$, we denote the ideal of $\bK[\bR_{\geq 0}\cap \bG]$ generated by $T^a$'s with $a>r$ by $\frakm(r)$. Obviously, $\frakm(r')\supset \frakm(r)$ if $r>r'$. Hence $\bK[\bR_{\geq 0}\cap \bG]/\frakm(r)$ forms a projective system. 
\begin{definition}
    The Novikov ring $\Lambda_0^{\bG}$ associated to $\bG$ is defined by
    \begin{equation}
\Lambda_0^{\bG}:=\lim_{\substack{\longleftarrow \\ r\rightarrow +\infty}}\bK[\bR_{\geq 0}\cap \bG]/\frakm(r).
\end{equation}
\end{definition}

\begin{example}[The universal Novikov ring]
    If $\bG=\bR$, the definition can be read as follows: Consider the semigroup of the non-negative real numbers $\bR_{\geq 0}$. We consider the polynomial ring $\bK[\bR_{\geq 0}]$. We denote the indeterminate corresponding to $a\in \bR_{\geq 0}$ by $T^a$. We set
\begin{equation}
\Lambda_0:=\Lambda_0^{\bR}\cong \lim_{\substack{\longleftarrow \\ a\rightarrow +\infty}}\bK[\bR_{\geq 0}]/T^a\bK[\bR_{\geq 0}].
\end{equation}
The ring is very useful in symplectic topology~\cite{FOOO} to control the energy/disk area. 
\end{example}

\begin{example}
    Similarly, if $\bG=\bZ$, we get the formal power series ring $\Lambda_0^{\bZ}\cong \bK[[T]]$.
\end{example}

\begin{example}
    Similarly, if $\bG=\bO:=\{0\}$, we get $\Lambda_0^\bO\cong \bK$.
\end{example}

We list up some properties of $\Lambda_0^\bG$.
\begin{lemma}
    For any $\bG$, the ring $\Lambda_0^\bG$ is 
    \begin{enumerate}
        \item an integral domain, and 
        \item a local ring.
    \end{enumerate}
\end{lemma}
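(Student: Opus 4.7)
I will first give a concrete description of elements of $\Lambda_0^\bG$. Unwinding the inverse limit, an element is a compatible family of truncated polynomials, and so can be written as a formal sum $f=\sum_{a\in\bR_{\geq 0}\cap\bG} c_a T^a$ with $c_a\in\bK$ such that the support $\mathrm{supp}(f):=\{a: c_a\neq 0\}$ meets every interval $[0,r]$ in a finite set. In particular $\mathrm{supp}(f)\subset\bR_{\geq 0}$ is a discrete, well-ordered set. Multiplication is the obvious convolution $(fg)_c=\sum_{a+b=c}c_ad_b$, where for each fixed $c$ the sum is finite because $\mathrm{supp}(f)$ and $\mathrm{supp}(g)$ are bounded below by $0$. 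I will verify that all statements below translate consistently across the truncations $\bK[\bR_{\geq 0}\cap \bG]/\frakm(r)$.

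\textbf{(1) Integral domain.} Let $f,g\in\Lambda_0^\bG$ be nonzero. Since $\mathrm{supp}(f),\mathrm{supp}(g)$ are well-ordered and nonempty, let $a_0:=\min\mathrm{supp}(f)$ and $b_0:=\min\mathrm{supp}(g)$. The coefficient of $T^{a_0+b_0}$ in $fg$ is $\sum_{a+b=a_0+b_0}c_a d_b$, where the sum ranges over $a\in\mathrm{supp}(f)$, $b\in\mathrm{supp}(g)$. Since $a\geq a_0$ and $b\geq b_0$, such a decomposition forces $a=a_0,b=b_0$, so the coefficient is $c_{a_0}d_{b_0}\neq 0$. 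Hence $fg\neq 0$.

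\textbf{(2) Local ring.} The $\bK$-linear map $\epsilon\colon \Lambda_0^\bG\to\bK$, $\sum c_a T^a\mapsto c_0$, is a ring homomorphism (it is the inverse limit of the augmentation maps $\bK[\bR_{\geq 0}\cap\bG]/\frakm(r)\to\bK$). Let $\frakm:=\ker\epsilon$. It suffices to show every element $f\notin\frakm$ is a unit, for then $\frakm$ is the unique maximal ideal. Write $f=c_0(1+h)$ with $c_0\neq 0$ and $h\in\frakm$. I claim $1+h$ is a unit via the geometric series $\sum_{n\geq 0}(-h)^n$. To make this precise at the level of the inverse limit: for each $r>0$, the image $\bar h\in \bK[\bR_{\geq 0}\cap\bG]/\frakm(r)$ lies in the ideal generated by $T^a$ with $a>0$, and this ideal is nilpotent in the quotient (its $N$-th power lands in $\frakm(r)$ once $N\cdot \alpha>r$ for any positive $\alpha\in\mathrm{supp}(h)\cap(0,r]$, after checking the edge case $\mathrm{supp}(h)\cap(0,r]=\emptyset$, where $\bar h=0$ already). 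Hence $1+\bar h$ is invertible in each truncation, with inverse given by the truncated geometric series. Compatibility across different $r$ is immediate from the formula, so these inverses assemble into an inverse in the limit $\Lambda_0^\bG$.

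\textbf{Main obstacle.} The only subtle point is checking that the inversion of $1+h$ is legitimate in the inverse limit rather than merely a suggestive formal manipulation; this is handled by localizing the argument to each truncation $\bK[\bR_{\geq 0}\cap\bG]/\frakm(r)$, where the key observation is that the augmentation ideal is nilpotent because $\mathrm{supp}(h)\cap[0,r]$ is finite and bounded below away from $0$.
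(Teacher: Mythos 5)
Your argument is correct and goes the standard route; the paper simply declares both facts ``obvious'' and records the maximal ideal $\frakm=(T^a : a\in\bG\cap\bR_{>0})$, which agrees with your $\ker\epsilon$. Your description of $\Lambda_0^\bG$ as formal sums whose support is finite in every $[0,r]$, and the integral-domain argument via lowest-order coefficients, are both fine.

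One sentence in part (2) is misstated, though the computation you actually carry out is right. You write that ``the ideal generated by $T^a$ with $a>0$ \dots\ is nilpotent in the quotient $\bK[\bR_{\geq 0}\cap\bG]/\frakm(r)$.'' That is false when $\bG$ is dense (e.g.\ $\bG=\bR$): for every $N$ one can pick $\varepsilon$ with $N\varepsilon\leq r$, so $(T^\varepsilon)^N\neq 0$ in the quotient, and the augmentation ideal of the truncation is not nilpotent. What is true, and what your parenthetical actually proves, is that the \emph{element} $\bar h$ is nilpotent: since $\mathrm{supp}(h)\cap(0,r]$ is finite, it has a positive minimum $\alpha_0$, every monomial of $\bar h$ has exponent $\geq\alpha_0$, hence $\bar h^N$ has all exponents $\geq N\alpha_0$ and vanishes once $N\alpha_0>r$. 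So the geometric series for $(1+\bar h)^{-1}$ is a finite sum in each truncation, and uniqueness of inverses gives compatibility across $r$. Replace ``this ideal is nilpotent'' by ``$\bar h$ is nilpotent'' and the proof is clean.
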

\begin{proof}
    These are obvious. The maximal ideal $\frakm$ is given by the ideal generated by $\lc T^a\relmid a\in \bG\cap \bR_{>0} \rc$.
\end{proof}

\subsection{Modules over the Novikov ring}
Let $R$ be a Novikov ring i.e., $R=\Lambda_0^\bG$ for some $\bG$. We denote the abelian category of $R$-modules by $\Mod^\heartsuit(R)$. We also denote its derived category by $\Mod (R)$. We regard it as an $\infty$-category (or more concretely, a dg-category).

Let $X$ be a topological space. Let $R_X$ be the constant sheaf valued in $R$. We denote the abelian category $R_X$-modules by $\Sh^\heartsuit(X,R)$. We similarly consider the derived category of $\Sh^\heartsuit(X,R)$, and denote it by $\Sh(X,R)$, viewed as an $\infty$-category (or more concretely, a dg-category). Note that $\Sh(X,R)$ is equivalent to the $\infty$-category of sheaves valued in $\Mod(R)$ if $X$ is hypercomplete~\cite{ScholzeSixfunctor}.

In the case of $\bG\neq \bR$, to compare with sheaf theory, we introduce a noncommutative algebra as follows. We first introduce the following quiver: The set of vertices is $\bR/\bG$. For $[c], [c']\in \bR/\bG$, the set of arrows from $[c]$ to $[c']$ is denoted by
\begin{equation}
    \lc T_{[c]}^d \relmid d\in \bG\cap \bR_{\geq 0} \rc.
\end{equation}
The composition of the arrows follow the rule $T_{[c']}^{d'}T^d_{[c]}=T^{d+d'}_{[c]}$. Here we omit the target $[c']$ from the notation.

We denote the associated quiver algebra by $\bK[Q(\bR/\bG)]$. An element of this algebra is of the form
\begin{equation}
    \lb \sum_{d\in \bR_{\geq 0}}a_{[c],d}T_{[c]}^d\rb_{[c]\in \bR/\bG}
\end{equation}
where $a_{[c],d}\in \bK$ are zero except for finitely many $d$ for each $[c]$. For two elements, $\lb \sum_{d\in \bR_{\geq 0}}a_{[c],d}T_{[c]}^d\rb_{[c]\in \bR/\bG},     \lb \sum_{d\in \bR_{\geq 0}}b_{[c],d}T_{[c]}^d\rb_{[c]\in \bR/\bG} $, we define the product by
\begin{equation}
   \lb \sum_{d\in \bR_{\geq 0}}b_{[c],d}T_{[c]}^d\rb_{[c]\in \bR/\bG} \cdot\lb \sum_{d\in \bR_{\geq 0}}a_{[c],d}T_{[c]}^d \rb_{[c]\in \bR/\bG}= \lb\sum_{d''\in \bR_{\geq 0}}\lb \sum_{d+d'=d''}b_{[c+d], d'}a_{[c],d}\rb T_{[c]}^{d''}\rb_{[c]\in \bR/\bG}
\end{equation}
The sums are finite sums, so this is well-defined.

For $\ell>0$, we denote the ideal generated by the arrows represented by the positive numbers greater than $\ell$ by $\frakm(\ell)$. Explicitly,
\begin{equation}
    \frakm(\ell)=\lc \lb \sum_{d\in \bG\cap \bR_{\geq 0}}a_{[c],d}T^d_{[c]}\rb_{[c]\in \bR/\bG}\relmid a_{[c],d}=0 \text{ for } d\leq \ell\rc
\end{equation}
We take the completion of the algebra with $\frakm(\ell)$-adic topology and denote it by $L_0^{\bG}$. More explicitly, an element of $L_0^\bG$ is of the form
\begin{equation}
    \lb\sum_{d\in \bR_{\geq 0}}a_{[c],d}T_{[c]}^d\rb_{[c]\in \bR/\bG} 
\end{equation}
Here, for each $[c]$ and $L>0$, $a_{[c],d}\in \bK$ with $d\leq L$ are zero except for finitely many $d$ i.e., ``Novikov sum". The obvious multiplication is again well-defined.

By definition, the following is obvious:
\begin{lemma}\label{lem:persistentL}
As $\bK$-modules, $L_0^\bG\cong     \prod_{c\in \bR/\bG}\Lambda_0.$
\end{lemma}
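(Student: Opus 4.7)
The plan is to write down the obvious $\bK$-linear map suggested by the explicit description of $L_0^\bG$ given just before the lemma, and then check that it is an isomorphism. Concretely, I would send an element $\prod_{[c]} \sum_{d\in\bR_{\geq 0}} a_{[c],d} e_{[c],d}$ of $L_0^\bG$ to the tuple whose $[c]$-component is $\sum_{d\in \bR_{\geq 0}} a_{[c],d} T^d \in \Lambda_0$. Well-definedness is immediate, since the ``Novikov sum'' condition defining $L_0^\bG$ (for each $[c]$ and each $L>0$, only finitely many $d\leq L$ have $a_{[c],d}\neq 0$) is exactly the defining condition of $\Lambda_0$ in each coordinate.

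The conceptual observation that packages the argument is that the $\frakm(\ell)$-adic filtration is compatible with the orthogonal decomposition by source vertex. Using the multiplication formula above, I would first check that, as a $\bK$-module, $\frakm(\ell)\subset \bK[Q(\bR/\bG)]$ consists precisely of those formal sums with $a_{[c],d}=0$ for all $d\leq \ell$: any product of generators that involves a factor $e_{[c'],d'}$ with $d'>\ell$ produces a basis element $e_{[c],d''}$ with $d''\geq d'>\ell$, and the opposite inclusion is trivial. Consequently,
\[
\bK[Q(\bR/\bG)]/\frakm(\ell) \cong \prod_{[c]\in\bR/\bG} \bigoplus_{0\leq d \leq \ell}\bK e_{[c],d}.
\]
Taking the inverse limit as $\ell\to +\infty$ and swapping the limit with the product (inverse limits commute with products) gives
\[
L_0^\bG \cong \prod_{[c]\in\bR/\bG} \lim_{\substack{\longleftarrow\\ \ell\to +\infty}} \bigoplus_{0\leq d\leq \ell}\bK e_{[c],d},
\]
and the assignment $e_{[c],d}\mapsto T^d$ identifies each inner limit with $\Lambda_0$ by the very definition of the Novikov ring.

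The resulting composite is precisely the map above, so injectivity and surjectivity come for free: all coefficients can be recovered componentwise, and any tuple of Novikov sums lifts uniquely. I do not anticipate any substantive obstacle here; the only step requiring even mild attention is the identification of $\frakm(\ell)$ with the set of sums supported in $d>\ell$, which follows directly from the multiplication formula. The rest of the argument is pure unwinding of definitions.
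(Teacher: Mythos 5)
Your approach matches the paper's: the paper treats the lemma as immediate from the explicit ``Novikov sum'' description of $L_0^\bG$ given just above it, and the componentwise map $\prod_{[c]} \sum_d a_{[c],d} e_{[c],d} \mapsto \bigl(\sum_d a_{[c],d}T^d\bigr)_{[c]}$ that you write down is exactly that identification.

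However, the one step you mark as ``trivial'' is not. You assert that $\frakm(\ell)$ coincides with the set of formal sums with $a_{[c],d}=0$ for all $d\leq \ell$. The containment $\frakm(\ell)\subseteq\{$supported on $d>\ell\}$ does follow from degree-additivity under path composition, as you say. But the reverse inclusion fails if $\frakm(\ell)$ is read as the two-sided ideal in the ordinary algebraic sense and $\bR/\bG$ is infinite. Each generator $e_{[c_0],d_0}$ lives at a single source vertex, and a product $x\cdot e_{[c_0],d_0}\cdot y$ is constrained in which vertices it can touch; for instance with $\bG=\{0\}$ such a product vanishes at every vertex $c>c_0$ (there is no arrow from $c$ to $c_0$). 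A finite sum of such products therefore vanishes beyond finitely many constraints, while the element $a$ with $a_{c,d}=1$ exactly when $d=|c|+\ell+1$ is supported on $d>\ell$ at every vertex and cannot lie in the algebraic ideal. Consequently the intermediate claim $\bK[Q(\bR/\bG)]/\frakm(\ell)\cong\prod_{[c]}\bigoplus_{0\leq d\leq \ell}\bK e_{[c],d}$ does not hold with the literal algebraic quotient, and one cannot conclude the desired description of the inverse limit from it without further argument.

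The paper sidesteps this by simply positing the explicit description of $L_0^\bG$. The intended $\frakm(\ell)$ is evidently the \emph{closed} ideal in the vertexwise product topology, i.e.\ the set $\{$supported on $d>\ell\}$ itself, and with that reading your filtration computation is correct and your proof is exactly the paper's proof, fleshed out. You should make that reading explicit rather than attribute the reverse inclusion to the multiplication formula, which only gives the forward one.
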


We now would like to see several examples.
\begin{example}
The case of $\bR=\bG$. Since $\bR/\bG=\{*\}$, the quiver algebra is simply $\bK[Q(\bR/\bG)]=\bK[\bR_{\geq 0}]$. Hence the completion is $L_0^\bG=\Lambda_0$.
\end{example}

\begin{example}
Let $M$ be a persistence module, namely, a functor from the poset category $(\bR, \geq)$ to the category of vector spaces. For each $c\in \bR$, we denote the image under the functor by $M_c$. We suppose that there exists $L\in \bR$ such that $M_{c}=0$ for any $c<L$.
In the following, we will see that $\prod_{c\in \bR}M_c$ carries a $L_0^\bO$-module structure. 

For $c\leq c'$, we denote the structure morphism by $t_{c,c'}\colon M_c\rightarrow M_{c'}$.
We set $N_{-c}:=M_c$. Take an element $\prod_{c\in \bR} n_c=\prod_{c\leq -L}n_c\in \prod_{c\in \bR} N_c$. We also set $T_{[c]}^{d}=:f_{c, c+d}$.
The action is defined by
    \begin{equation}
        \lb         \prod_{c\in \bR} \sum_{c\leq c'}a_{c,c'}f_{c,c'}\rb\cdot \prod_{c\in \bR}n_c=\prod_{c\in \bR} n'_c
    \end{equation}
where 
\begin{equation}\label{eq:action}
    M_{-c}:=N_{c}\ni n_c'= \sum_{c\leq c'}a_{c,c'}t_{-c',-c}(n_{c'}).
\end{equation}
If $c'$ is sufficiently large, $n_{c'}=0$. Hence the sum (\ref{eq:action}) is a finite sum, hence well-defined.
\end{example}

The case of $\Lambda_0$ (i.e., $\bG=\bR$) is universal in the following sense:
\begin{lemma}
    We have an action of $\Lambda_0$ on $L_0^{ \bG}$. In particular, we have the forgetful functor
    \begin{equation}
        \frakf\colon \Mod(L_0^{\bG})\rightarrow \Mod(\Lambda_0).
    \end{equation}
\end{lemma}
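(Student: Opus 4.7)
The plan is to exhibit an explicit ring homomorphism $\phi \colon \Lambda_0 \to L_0^{\bG}$, from which the forgetful functor follows formally: for any left $L_0^{\bG}$-module $M$, restriction along $\phi$ equips $M$ with a left $\Lambda_0$-module structure, and this prescription is manifestly functorial in quasi-isomorphisms, so it descends to the derived categories.

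To define $\phi$, I would send the generator $T^a \in \Lambda_0$ (for $a \in \bR_{\geq 0}$) to the ``diagonal'' element
\begin{equation}
\tau^a := \prod_{[c]\in \bR/\bG} e_{[c],a} \;\in\; L_0^{\bG},
\end{equation}
which is a legitimate element of $L_0^{\bG}$ because for each class $[c]$ there is a single arrow (of length $a$) involved, trivially satisfying the Novikov finiteness condition at every scale $L$. A general element $\sum_\alpha c_\alpha T^{a_\alpha} \in \Lambda_0$ (a Novikov sum in $a_\alpha$) then maps to $\prod_{[c]} \sum_\alpha c_\alpha e_{[c],a_\alpha}$, and the Novikov condition on the $a_\alpha$ directly implies the componentwise Novikov condition needed for membership in $L_0^{\bG}$. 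In particular, $\phi$ is continuous for the natural adic topologies.

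The key verification is that $\phi$ is a ring homomorphism. Additivity is immediate from the definition. For multiplicativity it suffices to check $\tau^a \cdot \tau^{a'} = \tau^{a+a'}$, which I would obtain by plugging $b_{[c'],d'} = \delta_{d',a}$ and $a_{[c],d} = \delta_{d,a'}$ into the product formula of the excerpt: the inner sum $\sum_{d+d'=d''} \delta_{d',a}\delta_{d,a'}$ equals $1$ precisely when $d''=a+a'$, yielding $\tau^{a+a'}$. This extends to Novikov sums by continuity. (Note that $\tau^a$ is in general \emph{not} central when $a\notin \bG$, because left multiplication by $\tau^a$ shifts the $d$-index while right multiplication by $\tau^a$ shifts both the $d$-index and the $[c]$-index by $[a]$; this is harmless for constructing a left action, since $\Lambda_0$ is commutative so any ring map $\Lambda_0\to L_0^\bG$ suffices.)

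There is no real obstacle here; the only mildly delicate point is bookkeeping with the two Novikov conditions (the one defining $\Lambda_0$ and the componentwise one defining $L_0^{\bG}$) to ensure $\phi$ lands in the completed algebra rather than just the dense subalgebra. Once $\phi$ is in hand, the forgetful functor $\frakf \colon \Mod(L_0^{\bG}) \to \Mod(\Lambda_0)$ is defined on the abelian level by restriction of scalars, and passes to the derived $\infty$-categories by the usual argument (restriction along a ring map preserves quasi-isomorphisms).
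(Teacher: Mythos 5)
Your proposal is correct and takes essentially the same approach as the paper: the paper simply declares that $T^{d'}$ acts on $e_{[c],d}$ by $e_{[c],d}\mapsto e_{[c],d+d'}$, which is exactly left multiplication by your diagonal element $\tau^{d'}=\prod_{[c]}e_{[c],d'}$. You have merely made explicit the ring homomorphism $\Lambda_0\to L_0^{\bG}$ and verified multiplicativity from the product formula, which the paper leaves implicit; the substance is identical.
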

\begin{proof}
    On $T_{[c]}^{d}$, the element $T^{d'}\in \Lambda_0$ acts on it by $T_{[c]}^{d}\mapsto T_{[c]}^{d+d'}$. This defines the desired action.
\end{proof}

\subsection{Real valuation}
We will later use the notion of real valuation. 
\begin{definition}
    Let $A$ be an integral domain with a map $v\colon A\rightarrow \bR_{\geq 0}\cup \lc +\infty\rc$. We say $(A, v)$ is a real valuation ring if
    \begin{enumerate}
    \item $v(0)=+\infty$,
        \item $v(a+b)\geq \min\lc v(a),v(b)\rc$,
        \item $v(ab)=v(a)+v(b)$
    \end{enumerate}
    for any $a,b\in A$.
\end{definition}

\begin{example}
  For the Novikov ring $\Lambda_0$, we set
    \begin{equation}
        v(a):=\min\lc c\in \bR_{\geq 0}\relmid a_c\neq 0\rc.
    \end{equation}
    where $a=\sum_{c\in \bR_{\geq 0}}a_cT^c$. This obviously gives a real valuation of $\Lambda_0$.
\end{example}

\section{Derived complete modules}
In this section, we recall some basic properties of derived complete modules. Our references are \cite{Kedlaya} and \cite{Stacks}.

\subsection{Derived completeness}
We first recall the definition of the completeness/derived completeness. 
\begin{definition}
Let $A$ be a ring and $I$ be a finitely generated ideal of $A$. Let $M$ be an $A$-module. The inverse limit $\displaystyle{\lim_{\substack{\longleftarrow\\ n\rightarrow \infty}}M/I^nM}$ is called the $I$-adic completion of $M$. 

We say $M$ is $I$-adically complete if the natural morphism 
\begin{equation}
    M\rightarrow \lim_{\substack{\longleftarrow\\ n\rightarrow \infty}}M/I^nM
\end{equation}
is an isomorphism. In other words, $M$ is complete with respect to $I$-adic topology.
\end{definition}

\begin{example}
    We consider the case of $\Lambda_0$ and $I$-adic completeness where $I=T\Lambda_0$.
    \begin{enumerate}
        \item $\Lambda_0$ itself is complete.
        \item $\bigoplus_\bN\Lambda_0$ is not complete. The completion is denoted by $\widehat{\bigoplus}_\bN\Lambda_0$. Concretely, it consists of a sequence $(x_i)_{i\in \bN}$ of $\Lambda_0$ satisfying $\lim_{i\rightarrow \infty}v(x_i)=\infty$ for the valuation $v$.
        \item $(\widehat{\bigoplus}_\bN\Lambda_0)\otimes_{\Lambda_0}(\widehat{\bigoplus}_\bN\Lambda_0)$ is not complete. Let us write the basis explicitly: $(\widehat{\bigoplus}_{i\in \bN}\Lambda_0e_i)\otimes_{\Lambda_0}(\widehat{\bigoplus}_{j\in \bN}\Lambda_0f_j)$. For example, in the completion, we have $\sum_{i=0}^\infty T^ie_i\otimes f_i$, but that is not in $(\widehat{\bigoplus}_\bN\Lambda_0)\otimes_{\Lambda_0}(\widehat{\bigoplus}_\bN\Lambda_0)$.
        
        In particular, the category of complete modules is not monoidal with respect to $\otimes_{\Lambda_0}$. 
        \item $\prod_{\bN}\Lambda_0$ is complete.
        \item $\Lambda_0[T^{-1}]$ is not complete, since $\Lambda_0[T^{-1}]/I^n\Lambda_0[T^{-1}]=0$ for any $n$.
        \item $\Lambda_0/\frakm$ is complete, since $(\Lambda_0/\frakm)/I^n(\Lambda_0/\frakm)=\Lambda_0/\frakm$.
    \end{enumerate}
\end{example}

It is known that the category of complete modules in general does not form an abelian category:
\begin{example}[{Adaptation of \cite[110.10]{Stacks}}]\label{ex:nonabel}
        We consider the following map
    \begin{equation}
        \varphi\colon \widehat{\bigoplus}_{n\in \bN}\Lambda_0\rightarrow \prod_{n\in \bN}\Lambda_0, (x_1,x_2,x_3,...)\mapsto (x_1, Tx_2, T^2x_3,...).
    \end{equation}
    in the full subcategory of the complete modules in $\Mod^\heartsuit(\Lambda_0).$
    We would like to check the homomorphism theorem. We first compute the coimage. The coimage is defined by the cokernel of the map $\ker(\varphi) \rightarrow \widehat{\bigoplus}_{n\in \bN}\Lambda_0$. Hence it is isomorphic to $\widehat{\bigoplus}_{n\in \bN}\Lambda_0$, since $\varphi$ is injective.
    
    On the other hand, the image $\image(\varphi)$ is defined by the kernel of the map $\prod_{n\in \bN}\Lambda_0\rightarrow \coker(\phi)$. The cokernel is defined by the completion of the cokernel $\coker^{\heartsuit}(\varphi)$ taken in $\Mod^\heartsuit(\Lambda_0)$. Since
    the element $(1,T,T^2,..)\in \prod_\bN\Lambda_0$ is not coming from $\varphi$, it defines a nontrivial element in $\coker^{\heartsuit}(\varphi)$. But, for any $n$, the class defined by $(1,T,T^2,..)$ modulo $\frakm^n$ is hit by $\varphi$. Hence $(1,T,T^2,..)$ is zero in the completion. Hence $(1,T,T^2,...)\in \image(\varphi)$. Hence the canonical morphism $\coim(\varphi)\rightarrow \image(\varphi)$ is not an isomorphism.
\end{example}
As we have seen, the notion of complete modules does not behave well homologically. For this reason, we use the notion of derived complete modules.
\begin{definition}
Let $A$ be a ring and $I$ is a finitely generated ideal of $A$. We say an object $M\in \Mod(A)$ is derived complete if $\Hom_{\Mod(A)}^\bullet(A[f^{-1}], M)\cong 0$ for any $f\in I$. 
\end{definition}
We have the following properties.
\begin{lemma}[e.g. {\cite{Kedlaya}}]
Let $A$ be a ring and $I$ is a finitely generated ideal of $A$.
\begin{enumerate}
\item Any complete module is derived complete.
\item Suppose $M\in \Mod^\heartsuit(A)$ is separated i.e., $\bigcap_n I^nM=0$ and derived complete. Then $M$ is complete.
\end{enumerate}
\end{lemma}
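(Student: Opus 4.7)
For part (1), the natural tool is the telescope description
\begin{equation}
    A[f^{-1}] \simeq \mathrm{colim}\lb A \xrightarrow{f} A \xrightarrow{f} A \xrightarrow{f} \cdots \rb,
\end{equation}
which identifies $\Hom_{\Mod(A)}(A[f^{-1}], M)$ with the derived limit of the $f$-telescope $\cdots \xrightarrow{f} M \xrightarrow{f} M$ on $M$. Because $M$ is $I$-adically complete and the transition maps $M/I^{k+1}M\twoheadrightarrow M/I^k M$ are surjective, the tower $\lc M/I^k M\rc_k$ is Mittag--Leffler, so $M\cong \varprojlim_k M/I^k M$ also as a derived limit. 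Commuting the two derived limits reduces the problem to showing that the $f$-telescope on each $M/I^k M$ has zero derived limit. Since $f\in I$, one has $f^k\cdot (M/I^k M)=0$, so multiplication by $f$ is nilpotent on $M/I^k M$; hence the tower is pro-zero, and both its limit and its $\varprojlim^1$ vanish.

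For part (2), I would begin with the principal case $I=(f)$. The two-term free presentation of $A[f^{-1}]$ coming from the telescope above identifies derived completeness of $M$ with the statement that the map
\begin{equation}
    \phi\colon \prod_{k\geq 0} M\to \prod_{k\geq 0} M,\qquad (m_k)_k\mapsto (m_k-fm_{k+1})_k,
\end{equation}
is an isomorphism of ordinary $A$-modules: its kernel and cokernel compute $\mathrm{Hom}^0$ and $\mathrm{Ext}^1$ from $A[f^{-1}]$ into $M$, and both vanish by the derived completeness hypothesis. Given a compatible sequence $(x_n)\in\varprojlim_n M/f^n M$, write $x_{n+1}-x_n=f^n z_n$ for some $z_n\in M$ and apply surjectivity of $\phi$ to the element $(z_k)_k$ to obtain $(m_k)_k$ with $m_k-fm_{k+1}=z_k$; then $x:=x_0+m_0$ lifts the sequence, since the identities $m_k-fm_{k+1}=z_k$ telescope to give $x-x_n\in f^n M$ inductively. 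Injectivity of the natural map $M\to \varprojlim M/f^n M$ is exactly the separatedness hypothesis.

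To extend to a general finitely generated $I=(f_1,\ldots,f_r)$, I would induct on $r$, using that the full subcategory of derived complete modules in $\Mod(A)$ is stable under derived limits and fiber sequences, and that separatedness with respect to $I$ automatically implies separatedness with respect to every subideal generated by a subset of the $f_i$'s (since $J^n M\subset I^n M$ whenever $J\subset I$). The main obstacle, and the delicate point of the argument, is that $I$-adic completeness is strictly stronger than the conjunction of $(f_i)$-adic completenesses, so the inductive step cannot simply combine principal-case results in parallel. Instead, one has to pass through the derived completion functor with respect to $I$, which does factor as an iterated composition of the derived completions with respect to the individual $f_i$, and use this factorization together with the separatedness hypothesis to control the negative cohomologies that would otherwise obstruct the identification of derived completion with classical completion.
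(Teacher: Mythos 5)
The paper does not supply a proof of this lemma --- it is cited from Kedlaya --- so there is no in-paper argument to compare against; here is an assessment of the proposal on its own terms. Your part (1) is correct and is the standard argument: $\Hom(A[f^{-1}],M)$ is the derived limit of the $f$-telescope, $M\simeq R\lim_k M/I^kM$ because the tower has surjective transition maps (Mittag--Leffler kills $\lim^1$) and $M$ is classically complete, and after commuting the two derived limits each inner tower on $M/I^kM$ is pro-zero since $f\in I$ acts nilpotently. Your principal-ideal case of part (2) is also correct and complete: the two-term free presentation of $A[f^{-1}]$ identifies derived $(f)$-completeness with bijectivity of $\phi\colon (m_k)_k\mapsto(m_k-fm_{k+1})_k$, your telescoping lift produces a preimage of any compatible system in $\lim_n M/f^nM$, and separatedness gives injectivity.

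The genuine gap is the reduction from a general finitely generated $I=(f_1,\dots,f_r)$ to the principal case. You rightly observe that a naive parallel combination of the principal-case conclusions does not yield $I$-adic completeness, but the proposed fix --- induction on $r$ through iterated derived $f_i$-completions while ``controlling negative cohomologies'' --- is not carried out, and it is not clear it can be completed as stated: the iterated completion a priori produces a complex, and identifying its $H^0$ with the classical $I$-adic completion is exactly the content that is missing. The route Kedlaya and the Stacks project take avoids the induction entirely. Let $K_n$ denote the Koszul complex on $(f_1^n,\dots,f_r^n)$ applied to $M$; by standard facts, derived $I$-completeness of a module $M$ placed in degree $0$ is equivalent to $M\xrightarrow{\sim}R\lim_n K_n$. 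The degree-$0$ Milnor sequence
\begin{equation}
0\longrightarrow {\lim}^1_n H^{-1}(K_n)\longrightarrow H^0\bigl(R\lim_n K_n\bigr)\longrightarrow \lim_n M/(f_1^n,\dots,f_r^n)M\longrightarrow 0
\end{equation}
then has middle term $M$ (since $R\lim_n K_n\simeq M$ is concentrated in degree $0$, so the negative cohomology you worry about never arises) and right-hand term $\lim_n M/I^nM$ (the ideal systems $(f_1^n,\dots,f_r^n)$ and $I^n$ are cofinal). Surjectivity of $M\to\lim_n M/I^nM$ falls out with no induction, and separatedness again supplies injectivity. Any successful version of your inductive strategy would in effect re-derive this Koszul computation, so it is cleaner to invoke it directly.
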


\begin{definition}
We denote the subcategory of derived complete modules with respect to $I=(T^1)$ of $\Mod(\Lambda_0)$ by $\Mod_\comp(\Lambda_0)$. We also set
\begin{equation}
    \Mod_\comp(L_0^{\bG}):=\frakf^{-1}(\Mod_\comp(\Lambda_0)).
\end{equation}
\end{definition}
\begin{lemma} \label{lem:presentabilityofL}
\begin{enumerate}
\item The inclusion $\Mod_{\comp}(L_0^{\bG})\subset \Mod(L_0^{\bG})$ admits a left adjoint. We call it the completion and denote it by $M\mapsto \widehat{M}$. Explicitly, it is given by 
\begin{equation}\label{eq:completionformula}
    \widehat M:=\lim_{\substack{\longleftarrow \\ r\rightarrow +\infty}}L_0^\bG/\frakm(r)\otimes_{L_0^\bG}M
\end{equation}
\item $\Mod_{\comp}(L_0^{\bG})$ is a stable and presentable category. We denote the coproduct in $\Mod_{\comp}(L_0^{\bG})$ by $\widehat{\bigoplus}$.
\end{enumerate}
\end{lemma}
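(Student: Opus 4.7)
The strategy is to reduce both claims to the classical theory of derived completion for a ring with a principal ideal, applied to $(\Lambda_0, T\Lambda_0)$ as recorded in \cite{Stacks, Kedlaya}, and then transfer the results to $L_0^\bG$ via the forgetful functor $\frakf$ together with the $\Lambda_0$-action on $L_0^\bG$ from the preceding lemma.

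For part (1), given $M\in \Mod(L_0^\bG)$, I would first verify that the candidate reflection $\widehat M := \lim_{r\to\infty} L_0^\bG/\frakm(r) \otimes_{L_0^\bG} M$, taken as a limit in $\Mod(L_0^\bG)$, is derived complete. Using the action $T^{a}\cdot e_{[c], d} = e_{[c], d+a}$, one checks directly that $T^n$ annihilates $L_0^\bG/\frakm(r)$ once $n > r$, so each tensor factor is $T$-nilpotent as a $\Lambda_0$-module and therefore derived $(T)$-complete, since $\Hom_{\Mod(\Lambda_0)}(\Lambda_0[T^{-1}], -) = \lim(\cdots \xrightarrow{T} - \xrightarrow{T} -)$ vanishes on $T$-nilpotent objects. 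Derived completeness is closed under limits (being cut out by a $\Hom$-vanishing condition), so $\widehat M$ is derived complete. The universality $\Hom_{\Mod(L_0^\bG)}(\widehat M, N) \simeq \Hom_{\Mod(L_0^\bG)}(M, N)$ for derived complete $N$ then follows by unraveling the Koszul-style presentation of each $L_0^\bG/\frakm(r)$ and passing to the limit, reducing to the vanishing $\Hom_{\Mod(\Lambda_0)}(\Lambda_0[T^{-1}], N) = 0$ which is precisely the derived completeness of $N$.

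For part (2), $\Mod(L_0^\bG)$ is a presentable stable $\infty$-category, and $\Mod_\comp(L_0^\bG)$ is a full subcategory closed under limits (again by the $\Hom$-vanishing definition), while part (1) supplies a left adjoint to the inclusion. Hence $\Mod_\comp(L_0^\bG)$ is a reflective localization of a presentable $\infty$-category and so is itself presentable. The coproduct in $\Mod_\comp(L_0^\bG)$ is then given by $\widehat\bigoplus_i M_i := \widehat{\bigoplus_i M_i}$, since the reflector, being a left adjoint, preserves colimits; the monoidal operation $\widehat\otimes$ is defined analogously as the completion of the tensor operation inherited from $\Mod(L_0^\bG)$.

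The main subtle point I expect is identifying the specific limit formula for $\widehat{(-)}$ with the abstract left adjoint, since the filtration $\{\frakm(r)\}_{r>0}$ is not a chain of powers of a single element in $L_0^\bG$ and so one-variable derived completion arguments do not apply verbatim. The identification $L_0^\bG \cong \prod_{c\in \bR/\bG}\Lambda_0$ of \cref{lem:persistentL} should let one factor the filtration levelwise over each $\Lambda_0$-summand and reduce the universality check to the classical principal-ideal case over $\Lambda_0$.
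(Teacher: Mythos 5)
Your approach is correct, and it overlaps with the paper's for part (2) while filling a genuine gap for part (1). The paper's proof is a one-liner: it observes that the inclusion $\Mod_\comp(L_0^\bG) \subset \Mod(L_0^\bG)$ is closed under limits (the paper says ``product-preserving''), and from this, via the adjoint functor theorem / reflective localization formalism, concludes that the left adjoint exists and that the subcategory is presentable, with the coproduct given by completing the ambient coproduct. The paper does \emph{not} verify the explicit limit formula $\widehat M = \lim_r L_0^\bG/\frakm(r) \otimes_{L_0^\bG} M$; it is simply asserted. You supply this missing verification: you check that each $L_0^\bG/\frakm(r)$ is annihilated by $T^n$ for $n > r$, hence is $T$-nilpotent and therefore derived $(T)$-complete over $\Lambda_0$, and then use closure of derived completeness under limits. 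You also correctly flag the real subtlety here --- that $\frakm(r)$ is not the $r$-th power of a single ideal in $L_0^\bG$, so the one-variable Koszul-style arguments do not apply verbatim --- and propose to resolve it either by cofinality of the $\frakm(r)$-filtration with the $T^n$-filtration or by the product decomposition $L_0^\bG \cong \prod_{c\in\bR/\bG}\Lambda_0$ of \cref{lem:persistentL}. The one place you could tighten things is the universality step: you sketch it as ``unraveling a Koszul-style presentation and passing to the limit,'' but the cleanest route, consistent with how $\Mod_\comp$ is defined as $\frakf^{-1}(\Mod_\comp(\Lambda_0))$, is to note that derived completeness here is literally $(T)$-adic derived completeness after forgetting to $\Lambda_0$, so the standard single-variable universality applies once you know the candidate $\widehat M$ receives a map from $M$ and is derived complete, and that any map $M \to N$ to a derived complete $N$ factors because $\mathrm{fib}(M \to \widehat M)$ lies in the $T^{-1}$-local subcategory. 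Overall, your proof is more detailed than the paper's and addresses a point the paper leaves implicit.
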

\begin{proof}
Due to some standard facts (see e.g. \cite[Proposition 6.6.2]{Kedlaya} or \cite[Lemma 15.93.18]{Stacks}), the derived complete objects is characterized by the right orthogonal to the homotopy limit of the sequence
\begin{equation}
    N_c:=\cdots \xrightarrow{T^c}\Lambda_0\xrightarrow{T^c}\Lambda_0\xrightarrow{T^c}\Lambda_0.
\end{equation}
for some $c>0$ (and any $c>0$). We also set
\begin{equation}
        N_c^\bG:=\cdots \xrightarrow{T^c}L^\bG_0\xrightarrow{T^c}L^\bG_0\xrightarrow{T^c}L^\bG_0.
\end{equation}
for $c\in \bG\cap \bR_{>0}$. It is easy to see that the vanishing of $\Hom_{\Mod(L_0^\bG)}(N_c^\bG, M)$ is equivalent to the vanishing of $\Hom_{\Mod(\Lambda_0)}(N_c, \frakf(M))$. Hence $\Mod_\comp(L_0^\bG)$ is the full subcategory of the $S$-local objects where $S=\lc N_c^\bG \rc_{c\in \bG\cap \bR_{>0}}$. Since $\Mod(L_0^\bG)$ is compactly generated, \cite[5.5.4.15]{HTT} implies that $\Mod_\comp(L_0^\bG)$ is presentable and the inclusion $\Mod_\comp(L_0^\bG)\hookrightarrow \Mod(L_0^\bG)$ admits a left adjoint. Now it is easy to see the left adjoint is given by the formula (\ref{eq:completionformula}), and the coproduct is given by
\begin{equation}
    \widehat\bigoplus_{i\in I}\cE_i:=\widehat{\bigoplus_{i\in I}\cE_i}.
\end{equation}
\end{proof}

\begin{example}
    \begin{enumerate}
    \item From the above examples, $\Lambda_0, \prod_\bN\Lambda_0, \Lambda_0/\frakm$ are complete, hence derived complete.
    \item The coproduct $\bigoplus_\bN\Lambda_0$ is separated and not complete, hence not derived complete.
    \item $\Lambda_0[T^{-1}]$ is not complete, not separated. Since $\Hom_{\Lambda_0}(\Lambda_0[T^{-1}],\Lambda_0[T^{-1}])$ is not zero, the module $\Lambda_0[T^{-1}]$ is not derived complete.
    \item The cokernel of the map
    \begin{equation}
        \widehat{\bigoplus}_{n\in \bN}\Lambda_0\rightarrow \prod_{n\in \bN}\Lambda_0, (x_1,x_2,x_3,...)\mapsto (x_1, Tx_2, T^2x_3,...)
    \end{equation}
    in Example~\ref{ex:nonabel} taken in $\Mod^\heartsuit(\Lambda_0)$ is not complete. But it gives an exact triangle in $\Mod(\Lambda_0)$, hence derived complete.
    \end{enumerate}
\end{example}

\section{Almost modules}
In the later comparison, we have some discrepancy between sheaf and Novivkov ring which can be ignored by using almost mathematics. In this section, we recall some basic constructions. We refer to \cite{GabberRamero} for general ideas of almost mathematics.
\subsection{Almost isomorphism in $\Mod(R)$}
We have the usual Novikov ring $\Lambda_0$ and its maximal ideal $\frakm$. Note that $\frakm$ is flat.
\begin{definition}
\begin{enumerate}
    \item For $M\in \Mod(\Lambda_0)$, we say $M$ is almost zero if $M\otimes_{\Lambda_0} \frakm=0$.
    \item     Let $f\colon M\rightarrow N$ be a morphism of $\Lambda_0$-modules. We say $f$ is an almost isomorphism if $\ker(f)$ and $\coker(f)$ are almost zero modules.
\end{enumerate}
\end{definition}
We note that the full subcategory $\Sigma$ of $\Mod(\Lambda_0)$ consisting of the almost zero modules is a thick subcategory. 
We take the quotient $\Mod(\Lambda_0^\almost):=\Mod(\Lambda_0)/\Sigma$. We denote the quotient functor by 
\begin{equation}
    \fraka\colon \Mod(\Lambda_0)\rightarrow \Mod(\Lambda_0^\almost).
\end{equation}
There exists a right adjoint~\cite{GabberRamero} denoted by
\begin{equation}
    (-)_*:=\Hom_{\Mod(\Lambda_0^\almost)}(\Lambda_0,-)\colon \Mod(\Lambda_0^\almost)\rightarrow \Mod(\Lambda_0); M\mapsto M_*.
\end{equation}

We next consider $R:=L_0^\bG$ for some $\bG$. We have the forgetful functor $\Mod(L_0^\bG)\rightarrow \Mod(\Lambda_0)$. We set 
\begin{equation}
    \Sigma_{R}:=\frakf^{-1}(\Sigma).
\end{equation}
 We set
\begin{equation}
    \Mod(R^\almost):=\Mod(R)/\Sigma_R.
\end{equation}
We denote the quotient functor by
\begin{equation}
    \fraka\colon \Mod(R)\rightarrow \Mod(R^\almost).
\end{equation}
We say $M, N\in \Mod(R)$ are almost isomorphic if $\fraka(M)\cong \fraka(N)$.

\subsection{Almost isomorphisms between $\cC$-modules}
Let $\cC$ be a stable $\Lambda_0$-linear category.

\begin{definition}
    \begin{enumerate}
        \item Let $\Mod(\cC,\Lambda_0)$ be the category of $\cC$-modules over $\Lambda_0$. An almost zero module $M$ is a module such that $\frakm\otimes M\cong 0$. In other words, $M(c)$ is almost zero for any $c\in \cC$.
        \item We denote the category of almost modules by $\Mod(\cC, \Lambda_0^\almost)$ which is the quotient by the almost zero modules.
         \item Two objects $M, N$ in $\Mod(\cC, \Lambda_0)$ are said to be almost isomorphic if they are isomorphic in $\Mod(\cC, \Lambda_0^\almost)$.
        \item  We denote the $\Lambda_0$-linear Yoneda embedding by $\cY\colon \cC\rightarrow \Mod(\cC, \Lambda_0)$. We say $\cE, \cF\in \cC$ are almost isomorphic if  $\cY(\cE)$ and $\cY(\cF)$ are almost isomorphic. In this case, we denote it by $\cE\cong_\almost\cF$.
    \end{enumerate}
\end{definition}

\begin{lemma}
    Let $f,f'\colon \cE\rightarrow \cF$ be morphisms such that $f-f'$ is almost zero. Then $\Cone(f)\cong_\almost\Cone(f')$.
\end{lemma}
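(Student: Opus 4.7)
The plan is to apply the $\Lambda_0$-linear Yoneda embedding $\cY$ and reduce to checking that $\cY(f)$ and $\cY(f')$ have equal images in the quotient $\Mod(\cC, \Lambda_0^\almost)$; once that is established, the fact that the quotient functor $\fraka$ preserves cofibers will finish the job.

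Writing $g := \cY(f) - \cY(f') = \cY(f - f')$, the $\Lambda_0$-linearity of $\cY$ combined with the hypothesis gives $T^a g = 0$ in $\Hom(\cY(\cE), \cY(\cF))$ for every $a > 0$. The first step I would take is to verify that any such almost zero morphism vanishes after applying $\fraka$. Using the standard description of Hom-spaces in the almost category coming from the left adjoint $(-)_!$ of $\fraka$ (see \cite{GabberRamero}),
\[
\Hom_{\Mod(\cC, \Lambda_0^\almost)}(\fraka X, \fraka Y) \cong \Hom_{\Mod(\cC, \Lambda_0)}(\frakm \otimes X, Y),
\]
the morphism $\fraka(g)$ corresponds to the composite $\frakm \otimes X \to X \xrightarrow{g} Y$, which annihilates every simple tensor $T^a y \otimes x$ since $T^a y \cdot g(x) = y \cdot T^a g(x) = 0$. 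Hence $\fraka(g) = 0$.

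Applying this to our $g$ yields $\fraka \cY(f) = \fraka \cY(f')$ as morphisms in $\Mod(\cC, \Lambda_0^\almost)$. Because the quotient by the thick subcategory $\Sigma$ of almost zero objects is an exact (cofiber-preserving) functor, and $\cY$ is likewise exact, we obtain
\[
\fraka \cY(\Cone(f)) \cong \Cone(\fraka \cY(f)) = \Cone(\fraka \cY(f')) \cong \fraka \cY(\Cone(f')),
\]
which is precisely the statement $\Cone(f) \cong_\almost \Cone(f')$.

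I expect the one part requiring genuine thought to be the first step—identifying almost zero morphisms with morphisms annihilated by $\fraka$—but this drops out of the basic almost-mathematics formalism of \cite{GabberRamero} and a direct computation with simple tensors as above. The remainder is a formal manipulation using the exactness of $\fraka$ and $\cY$ together with the triangulated universal property of the Verdier quotient.
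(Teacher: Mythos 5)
Your proposal is correct and takes essentially the same approach as the paper: both negate the almost zero difference $f - f'$, you by passing to the almost quotient via $\fraka$ and the exactness of the quotient functor, the paper by tensoring with $\widetilde\frakm := \Lambda_+\otimes_{\Lambda_0}\Lambda_+$ and using its flatness together with the natural almost isomorphism $\widetilde\frakm\otimes M\to M$; since $\fraka$ and its adjoints are built precisely from $\widetilde\frakm\otimes(-)$, these are two phrasings of the same argument. One small caveat: the Hom-formula from Gabber--Ramero is stated with $\widetilde\frakm = \frakm\otimes_{\Lambda_0}\frakm$ rather than $\frakm$ itself, though for the universal Novikov ring these coincide (as $\frakm$ is flat and idempotent), so your computation is unaffected.
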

\begin{proof}
    Recall that $\frakm$ is flat. Then 
    \begin{equation}
       \frakm \otimes_{\Lambda_0}  \cY(\Cone(f))=\Cone(\frakm \otimes_{\Lambda_0} \cY(f)\colon \frakm \otimes_{\Lambda_0} \cY(\cE)\to \frakm \otimes_{\Lambda_0} \cY(\cF)).
    \end{equation}
    Since $f-f'$ is almost zero, $\frakm \otimes_{\Lambda_0} \cY(f)=\frakm \otimes_{\Lambda_0} \cY(f')$. Hence we have an isomorphism
\begin{equation}\label{eq:almost_isom_cone}
    \frakm \otimes_{\Lambda_0} \cY(\Cone(f'))\cong \frakm \otimes_{\Lambda_0}\cY(\Cone(f)).
\end{equation}
Since $\frakm$ is almost isomorphic to $\Lambda_0$, we have
\begin{equation}
\cY(\Cone(f'))\cong_\almost\frakm \otimes_{\Lambda_0} \cY(\Cone(f')) \cong \frakm \otimes_{\Lambda_0} \cY(\Cone(f)) \cong_\almost\cY(\Cone(f)).
\end{equation}
where the middle equality uses (\ref{eq:almost_isom_cone}).
\end{proof}

For general $\bG$, we slightly modify the setup:
\begin{definition}
Let $\bG$ be a subgroup of $\bR$.
    A category over $L_0^\bG$ is a tuple
    \begin{enumerate}
        \item A cocomplete category $\cC$, and
        \item a group homomorphism $T_\bullet\colon \bR/\bG\rightarrow \Aut(\cC)$.
    \end{enumerate}
    with a homomorphism
    \begin{equation}
        L_0^\bG\rightarrow\End(\bigoplus_{c\in \bR/\bG}T_c).
    \end{equation}
\end{definition}

Let $\cC$ be a category over $L_0^\bG$. Then we have a functor
\begin{equation}
    \cY\colon \cC\rightarrow \Mod(\cC, L_0^\bG):=\Fun(\cC, \Mod(L_0^\bG)); \cE\mapsto \Hom(\bigoplus_{c\in\bR/\bG}T_c\cE,-)
\end{equation}
where $\Fun$ is the functor category.

\begin{definition}
    \begin{enumerate}
        \item An almost zero module $M\in \Mod(\cC, L_0^\bG)$ is a module such that $M(c)\in \Sigma_{L_0^\bG}$ for any $c\in \cC$. 
        \item We denote the category of almost modules by $\Mod(\cC,{L_0^\bG}^\almost)$ which is the quotient by the almost zero modules.
        \item Two objects $M, N$ in $\Mod(\cC, {L_0^\bG})$ are said to be almost isomorphic if they are isomorphic in $\Mod(\cC, {L_0^\bG}^\almost)$.
        \item We say $\cE, \cF\in \cC$ are almost isomorphic if  $\cY(\cE)$ and $\cY(\cF)$ are almost isomorphic. In this case, we denote it by $\cE\cong_\almost\cF$.
    \end{enumerate}
\end{definition}
Similarly, we can prove that the extensions by almost same morphisms are almost isomorphic.

\subsection{Almost equivalence}
\begin{definition}
Let $\cC_1, \cC_2$ be categories defined over $\Lambda_0$.
    Let $F\colon \cC_1\rightarrow \cC_2$ be a $\Lambda_0$-linear functor. We say $F$ is an almost equivalence if it satisfies the following:
    \begin{enumerate}
        \item For any $c, c'\in \cC_1$, the induced morphism
        \begin{equation}
            \Hom_{\cC_1}(c, c')\rightarrow  \Hom_{\cC_2}(F(c), F(c'))
        \end{equation}
        is an almost isomorphism.
        \item For any $c' \in \cC_2$, there exists $c\in \cC_1$ such that $F(c)$ is almost isomorphic to $c'$.
    \end{enumerate}
\end{definition}

In the following, we give a little generalization of the above notion: 
\begin{definition}
    Let $\cC_1, \cC_2$ be cocomplete categories over $L_0^\bG$. 
    \begin{enumerate}
        \item     A morphism $F\colon \cC_1\rightarrow \cC_2$ is a functor from $\cC_1$ to $\cC_2$ commuting with $T_\bullet$.
        \item We say a morphism $F\colon \cC_1\rightarrow \cC_2$ is almost fully faithful (, or almost embedding, $\cC_1\hookrightarrow_\almost \cC_2$) if the following holds: For any $\alpha, \alpha'\in \cC_1$, the induced morphism
        \begin{equation}
            \Hom_{\cC_1}(\bigoplus_{c\in \bR/\bG}T_c\alpha, \alpha')\rightarrow  \Hom_{\cC_2}(\bigoplus_{c\in \bR/\bG}T_cF(\alpha), F(\alpha'))
        \end{equation}
        is an almost isomorphism in $\Mod(L_0^\bG)$.
        \item We say a morphism $F\colon \cC_1\rightarrow \cC_2$ is almost essentially surjective if the following holds: For any $\alpha' \in \cC_2$, there exists $\alpha\in \cC_1$ such that $F(\alpha)$ is almost isomorphic to $c'$.
        \item We say $\cC_1$ and $\cC_2$ are almost equivalent (, or $\cC_1\cong_\almost \cC_2$) if there exists a functor $f$ from $\cC_1$ to $\cC_2$ such that $f$ is almost fully faithful and almost essentially surjective.
    \end{enumerate}
\end{definition}

We will deal with the following two examples:
\begin{example}
    We consider the category $\Mod(L_0^\bG)$. Since an object of $\Mod(L_0^\bG)$ carries an $\bR/\bG$-grading, we can shift it. We denote the resulting shift functor by $T_c$. Then, for any $M, N\in \Mod(L_0^\bG)$, we have
    \begin{equation}
        \Hom_{\Mod(L_0^\bG)}(\bigoplus_{c\in \bR/\bG} T_cM, N),
    \end{equation}
    which is a right $L_0^\bG$-module. 
\end{example}

\begin{example}
The category $\mu^\bG(T^*X)$ will be introduced in the next section. We have shift operations $T_c$ parametrized by $c\in \bR/\bG$. Then, for any $\cE, \cF\in \mu^\bG(T^*X)$, we have
    \begin{equation}
        \Hom_{ \mu^\bG(T^*X)}(\bigoplus_{c\in \bR/\bG} T_c\cE, \cF),
    \end{equation}
    which is a right $L_0^\bG$-module. 
\end{example}

\section{Equivariant sheaves and the Novikov ring}
In this section and the next section, we relate Tamarkin categories with Novikov rings precisely.
\subsection{Basics}
Let $\bR_t$ be the $1$-dimensional real vector space with the standard coordinate $t$. 

We consider the addition action of a subgroup $\bG\subset \bR$ on $\bR_t$ as a discrete group action. Then we can consider the derived $\infty$-category of equivariant $\bK$-module sheaves $\Sh^{\bG}(\bR_t,\bK)$. The notion of microsupport $\SS$ is defined for each object of $\Sh(\bR_t,\bK)$. For each object $\Sh^{\bG}(\bR_t,\bK)$, we define its microsupport by that of the image under the forgetful functor $\Sh^{\bG}(\bR_t,\bK)\to \Sh(\bR_t,\bK)$ forgetting the equivariancy.

We denote the subcategory spanned by the object whose microsupport contained in $\bR\times \bR_{\leq 0}\subset \bR\times \bR\cong T^*\bR_t$ by $\Sh^{\bG}_{\bR_{\leq 0}}(\bR_t,\bK)$. We set
\begin{equation}
   \mu^{\bG}(*):= \Sh^{\bG}_{\bR_{>0}}(\bR_t,\bK):=\Sh^{\bG}(\bR_t,\bK)/\Sh^{\bG}_{\bR_{\leq 0}}(\bR_t,\bK).
\end{equation}
By Tamarkin's cutoff functor (see \cite{WKBkuw}), the category $\mu^\bG(*)$ has a canonical fully faithful embedding $\mu^\bG(*)\hookrightarrow \Sh^\bG(\bR, \bK)$. Most of the hom-computations below are done through this embedding.

We have the following:
\begin{lemma}[\cite{WKBkuw, hRH}]\label{lem:unitend}
\begin{enumerate}
    \item $\mu^{\bG}(*)$ has a monoidal structure $\star$ defined by the convolution product.
    \item  We equip the sheaf $1_\mu:=\bigoplus_{c\in \bG}\bK_{t\geq c}$ with an obvious equivariant structure. Then it defines an object of $\mu^{\bG}(*)$ and is a monoidal unit.
    \item We have  $H^0\End_{\mu^{\bG}(*)}(1_\mu) \cong \Lambda_0^{\bG}$. As a corollary of 2 and 3, $\mu^{\bG}(*)$ is enriched over $\Lambda_0^{\bG}$.
    \item The forgetful functor $\mu^\bG(*)\rightarrow \mu^\bO(*)$ forgetting the equivariancy admits left adjoint given by $\cE\mapsto \bigoplus_{c\in \bG}\bK_{t\geq c}\star \cE$ with the obvious equivariant structure on the image.
\end{enumerate}
\end{lemma}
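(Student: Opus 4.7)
I would prove the three parts in order, following the Tamarkin paradigm together with the $\bG$-equivariant setup of \cite{WKBkuw}.

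For (1), define $\cF \star \cG := s_!(\cF \boxtimes \cG)$ on $\Sh(\bR_t)$, where $s\colon \bR_t \times \bR_t \to \bR_t$ is addition. Since $\bG$ acts by translations commuting with $s$, the bifunctor lifts to $\Sh^\bG(\bR_t)$. By the Kashiwara--Schapira microsupport estimates for proper pushforward and external products, $\musupp(\cF \star \cG)$ is contained in the fiberwise sum $\musupp(\cF) +_s \musupp(\cG)$. Hence $\Sh^\bG_{\bR_{\leq 0}}$ is a two-sided monoidal ideal and $\star$ descends to $\mu^\bG(*)$; associativity and unitality (with the naive unit $\bK_{\{0\}}$) pass to the quotient.

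For (2), the equivariant structure on $1_\mu$ is transparent: translation by $g \in \bG$ sends $\bK_{t \geq c}$ to $\bK_{t \geq c+g}$, merely permuting the $\bG$-indexed summands. The summand $\bK_{t \geq 0}$ has microsupport $\{t = 0, \xi \geq 0\} \cup \{t > 0, \xi = 0\}$, which is not contained in $\bR \times \bR_{\leq 0}$, so $1_\mu$ is a nonzero object of $\mu^\bG(*)$. To verify the unit property, I would first recall Tamarkin's classical observation that $\bK_{t \geq 0}$ represents the convolution unit in the non-equivariant quotient $\mu(*)$: the endofunctor $\bK_{t \geq 0} \star (-)$ is a model for the reflection onto $\mu(*) \hookrightarrow \Sh(\bR_t)$. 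Passing to the equivariant setting, $1_\mu$ is the induction of $\bK_{t \geq 0}$ along the forgetful $\Sh \to \Sh^\bG$, and the induced convolution with $1_\mu$ implements the corresponding reflection onto $\mu^\bG(*)$, making $1_\mu$ the unit.

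For (3), for each $a \in \bG \cap \bR_{\geq 0}$ the restriction maps $\bK_{t \geq c} \to \bK_{t \geq c + a}$ are $\bG$-equivariant in the index $c$ and assemble into an endomorphism $T^a\colon 1_\mu \to 1_\mu$ satisfying $T^a \circ T^b = T^{a+b}$. This yields a ring map $\bK[\bG \cap \bR_{\geq 0}] \to H^0 \End_{\mu^\bG(*)}(1_\mu)$. To extend to the Novikov completion, I would analyse $\Hom_{\mu^\bG(*)}(1_\mu, 1_\mu)$ by decomposing into summands: for each target summand $\bK_{t \geq d}$, only finitely many $T^{a_i}$ contribute modulo any $\frakm(r)$, so Novikov sums $\sum_i \lambda_i T^{a_i}$ with $a_i \to \infty$ define convergent endomorphisms. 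Explicit Hom computations between summands, combined with the reflective description of the Verdier quotient, should identify $H^0\End(1_\mu)$ with $\Lambda_0^\bG$. The principal obstacle is exactly this last step: producing precisely $\Lambda_0^\bG$, neither the uncompleted polynomial algebra $\bK[\bG \cap \bR_{\geq 0}]$ nor a larger object. The key input is that the infinite direct sum $1_\mu$, as the induced object from $\bK_{t \geq 0}$, has endomorphisms controlled by a projective limit over the successive quotients $\bK[\bG \cap \bR_{\geq 0}]/\frakm(r)$, which is precisely the defining property of $\Lambda_0^\bG$.
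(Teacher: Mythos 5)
First, a note on the comparison: the paper does not actually prove Lemma~\ref{lem:unitend}; it is cited directly from \cite{WKBkuw, hRH}, so there is no in-paper argument to check yours against. Evaluating your proposal on its own terms, there is a genuine gap at the very first step, and it propagates. The naive convolution $\cF\star\cG := s_!(\cF\boxtimes\cG)$ does not lift to a monoidal structure on $\Sh^\bG(\bR_t)$ in the way you claim. The external product $\cF\boxtimes\cG$ is $\bG\times\bG$-equivariant and $s$ is equivariant for the addition map $\bG\times\bG\to\bG$, but restricting along the diagonal $\bG\hookrightarrow\bG\times\bG$ makes $s$ intertwine with the action $t\mapsto t+2g$ rather than $t\mapsto t+g$, and restricting along either factor throws away half the equivariant data. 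You can see that something must be wrong by testing the alleged unit: with the naive $s_!$, for any equivariant $\cF$ one computes $1_\mu\star\cF\cong\bigoplus_{c\in\bG}(\bK_{t\geq c}\star\cF)\cong\bigoplus_{c\in\bG}T_c\cF\cong\bigoplus_{c\in\bG}\cF$, an infinite coproduct, not $\cF$. The correct equivariant convolution has to take coinvariants with respect to the residual antidiagonal $\bG$-action on $s_!(\cF\boxtimes\cG)$ (equivalently, one should work with the group structure on $[\bR_t/\bG]$ and its intrinsic convolution, for which the skyscraper at the identity pulls back to $\bigoplus_{c\in\bG}\bK_{\{c\}}\simeq_{\mu}1_\mu$); once that quotient is in place, the induced object $1_\mu$ genuinely becomes the unit. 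As written, (1) is incorrect and the unit argument in (2) does not follow. (Minor slip in (2): you wrote ``induction along the forgetful $\Sh\to\Sh^\bG$''; the forgetful functor goes $\Sh^\bG\to\Sh$, and induction is its left adjoint.)

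For (3), the outline is the right one, but it stops exactly where the content begins. The nontrivial assertion is that $H^0\End(1_\mu)$ is the completed ring $\Lambda_0^\bG$ and not the bare semigroup algebra $\bK[\bG\cap\bR_{\geq 0}]$ nor the full product; this comes down to showing that $\Hom(\bK_{t\geq 0},\bigoplus_{c\in\bG}\bK_{t\geq c})$, computed in the localization, is the $\frakm$-adic completion of the coproduct. Your phrase ``should identify $H^0\End(1_\mu)$ with $\Lambda_0^\bG$'' flags this as an expectation rather than an argument. The computation itself is not difficult and is essentially the degree-zero part of the exact-triangle analysis carried out in the proof of Lemma~\ref{lem:almostisom1} in the paper (the failure of $\Hom$ out of $\bK_{t\geq 0}$ to preserve infinite coproducts is exactly the completion), but it needs to be written out rather than asserted.
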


We strengthen the result a little more.
\begin{lemma}\label{lem:almostisom1}
We have an almost isomorphism of almost $\Lambda_0^\bG$-modules
\begin{equation}
    \End_{\mu^{\bG}(*)}(1_\mu)\cong_\almost\Lambda_0^{\bG}.
\end{equation}
\end{lemma}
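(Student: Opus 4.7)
The strategy is to upgrade Lemma~\ref{lem:unitend}'s identification at $H^0$ to an almost isomorphism at the full derived level, by computing $\End_{\mu^\bG(*)}(1_\mu)$ directly and certifying any discrepancy from $\Lambda_0^\bG$ lies in the almost zero subcategory. The crucial input is the presentation $1_\mu = \bigoplus_{c \in \bG} T_c \bK_{t \geq 0}$, which realizes $1_\mu$ as the equivariant object induced from $\bK_{t \geq 0}$.

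First I would apply the induction-restriction adjunction for the forgetful functor $\Sh^\bG \to \Sh$, which descends to the respective Tamarkin quotients, to reduce the endomorphism to a Hom in the non-equivariant Tamarkin category $\mu(*)$:
\begin{equation*}
\End_{\mu^\bG(*)}(1_\mu) \cong \Hom_{\mu(*)}\Big(\bK_{t \geq 0},\ \bigoplus_{c \in \bG} \bK_{t \geq c}\Big).
\end{equation*}
Here the forgetful functor sends the free $\bG$-equivariant object to its underlying direct sum. Using that $\bK_{t \geq 0}$ is the monoidal unit in $\mu(*)$, the functor $\Hom(\bK_{t \geq 0}, -)$ behaves like the projector onto positive-microsupport sheaves; each term $\bK_{t \geq c}$ contributes $\bK$ in degree zero precisely when $c \geq 0$, with the negative-$c$ contributions absorbed by the quotient.

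Next I would pass from the resulting naive sum to the Novikov completion $\Lambda_0^\bG$. Since $\bK_{t \geq 0}$ is not compact in $\Sh(\bR_t)$, $\Hom$ does not commute with the infinite coproduct on the nose; the Verdier quotient by $\Sh^\bG_{\bR_{\leq 0}}$ refines the resulting Hom with a natural $\frakm(r)$-adic topology, producing precisely the completion defining $\Lambda_0^\bG = \varprojlim_r \bK[\bR_{\geq 0} \cap \bG]/\frakm(r)$. The $H^0$ part matches by Lemma~\ref{lem:unitend}; for higher cohomology, each individual Hom is concentrated in degree zero since $[c, \infty)$ is contractible, and any residual higher-degree class arising from the exchange of $\Hom$ with the coproduct is $T^\epsilon$-divisible for every $\epsilon > 0$, hence almost zero. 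The $L_0^\bG$-module structure is compatible by construction: the shift operators $T_c$ acting on $1_\mu$ by permutation of summands induce the quiver algebra composition on endomorphisms, matching the $L_0^\bG$-action on $\Lambda_0^\bG$ via the forgetful map $\frakf$.

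The main obstacle is the precise identification of the completion discrepancy and the verification that it is almost zero. This requires a careful analysis of the interaction between the infinite direct sum in $\Sh^\bG$, the Verdier localization defining $\mu^\bG(*)$, and the $\frakm$-adic topology on $\Lambda_0$. The case $\bG = \bR$ is the most delicate because of the uncountable indexing in the coproduct, and it is here that the $\frakm$-divisibility argument must be applied to the full product $\prod_{c \in \bR_{\geq 0}} \bK$ modulo the Novikov-summable elements; absorbing this discrepancy is precisely what the almost mathematics is designed to do.
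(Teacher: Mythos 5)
Your reduction to $\Hom_{\mu(*)}(\bK_{t\geq 0},\bigoplus_{c\in\bG}\bK_{t\geq c})$ and the identification of $H^0$ with $\Lambda_0^\bG$ match the paper's opening move, and you correctly locate the whole problem in the higher cohomology produced by exchanging $\Hom$ with the infinite coproduct. But the key step, the one the lemma is actually about, is stated backwards. You assert that the residual higher-degree classes are ``$T^\epsilon$-divisible for every $\epsilon>0$, hence almost zero.'' Divisibility ($\frakm M = M$) is the \emph{opposite} of almost vanishing ($\frakm M = 0$): a nonzero $T^\epsilon$-divisible module such as $\Lambda_0[T^{-1}]$ is precisely \emph{not} almost zero, while a nonzero almost zero module such as $\Lambda_0/\frakm$ is precisely \emph{not} divisible. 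What the lemma needs, and what the paper proves, is that $T^a$ \emph{annihilates} every class in $H^1$ for every $a>0$.

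Moreover, that annihilation is not an automatic consequence of ``almost mathematics absorbing the discrepancy'': it requires an actual computation. The paper first observes $H^i$ vanishes for $i>1$ (one-dimensional base), then shows $H^1\Hom(\bK_{\bR},\bigoplus_c\bK_{t\geq c})=0$, so that
\[
H^1\End(1_\mu)\ \cong\ \coker\bigl(H^0\Hom(\bK_{\bR},{\textstyle\bigoplus_c}\bK_{t\geq c})\to H^0\Hom(\bK_{t<0},{\textstyle\bigoplus_c}\bK_{t\geq c})\bigr),
\]
and crucially identifies this with $\coker\bigl(H^0\Hom(\bK_{-a<t},-)\to H^0\Hom(\bK_{-a<t<0},-)\bigr)$ for every $a>0$; since the $T^a$-action shifts supports by $a$ and the ambient interval has length $a$, the action on this cokernel is zero. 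That identity is the mechanism that makes the cokernel $\frakm$-torsion, and it is absent from your argument. As written, your proof is a correct outline up to $H^0$ and a non sequitur for $H^{\geq 1}$; to complete it you need to replace the divisibility claim with the annihilation claim and actually verify it, e.g.\ by the cokernel reindexing above.
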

More precisely, the higher cohomologies of $\End_{\mu^{\bG}(*)}(1_\mu)$ are almost zero, but not zero.
\begin{proof}
For the case when $\bG=\{0\}$ and $\bG\cong \bZ$, there are no higher cohomologies. In the following, we only prove the case when $\bG=\bR$. Other cases (i.e., dense subgroups of $\bR$) can be proved similarly.

In the rest of this proof, $\Hom$ means dg-Hom space in $\Sh(\bR,\bK)$. 
    We first consider the following exact triangle:
    \begin{equation}\label{5.2_extriangle1}
        \Hom(\bK_{t\geq 0}, \bigoplus_{c\in \bR} \bK_{t\geq  c})\rightarrow \Hom(\bK_{\bR}, \bigoplus_{c\in \bR} \bK_{t\geq  c})\rightarrow \Hom(\bK_{t<0}, \bigoplus_{c\in \bR} \bK_{t\geq c})\rightarrow.
    \end{equation}
    Since the $i$-th cohomologies vanish for $i>1$ due to \cite[Proposition 3.2.2]{KS}, we will only take care of $H^1\Hom(\bK_{\bR}, \bigoplus_{c\in \bR} \bK_{t\geq  c})$. 

    Now we consider the exact triangle:
    \begin{equation}
        \Hom(\bK_{\bR}, \bigoplus_{c\in \bR} \bK_{t< c})\rightarrow \Hom(\bK_{\bR}, \bigoplus_{c\in \bR} \bK_{\bR})\rightarrow \Hom(\bK_{\bR}, \bigoplus_{c\in \bR} \bK_{t\geq c})\rightarrow 
    \end{equation}
    Note that 
    \begin{equation}
        \Hom(\bK_{\bR}, \bigoplus_{c\in \bR} \bK_{\bR})\simeq \bigoplus_{c\in \bR} \bK
    \end{equation}
    since $\bK_\bR$ is compact in the full subcategory of constant sheaves on $\bR$.
    Hence we have $H^1\Hom(\bK_{\bR}, \bigoplus_{c\in \bR} \bK_{t\geq c})\simeq 0$. This together with (\ref{5.2_extriangle1}) implies that 
    \begin{equation}\label{eq:coker}
        H^1\Hom(\bK_{t\geq 0}, \bigoplus_{c\in \bR} \bK_{t\geq  c})\simeq \coker(H^0\Hom(\bK_{\bR}, \bigoplus_{c\in \bR} \bK_{t\geq  c})\rightarrow H^0\Hom(\bK_{t<0}, \bigoplus_{c\in \bR} \bK_{t\geq  c})).
    \end{equation}
    Note that the right hand side is isomorphic to 
    \begin{equation}
            \coker(H^0\Hom(\bK_{-a<t}, \bigoplus_{c\in \bR} \bK_{t\geq  c})\rightarrow H^0\Hom(\bK_{-a<t<0}, \bigoplus_{c\in \bR} \bK_{t\geq  c}))
    \end{equation}
 for any $a>0$ by the non-characteristic deformation lemma. This implies that the action of $T^a$ on (\ref{eq:coker}) is zero for any $a>0$. Hence it is almost zero. This completes the proof.
\end{proof}
We further have the following:
\begin{lemma}\label{lem:almostisom2}
We have an almost isomorphism of almost $L_0^\bG$-modules
\begin{equation}
    \End_{\mu^{\bG}(*)}( \bigoplus_{c\in \bR/\bG}T_c1_\mu)\cong_\almost L_0^{\bG}.
\end{equation}
\end{lemma}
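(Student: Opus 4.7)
The plan is to reduce the statement to the already-proven Lemma~\ref{lem:almostisom1} via the $\bR/\bG$-shift equivariance of $\mu^\bG(*)$, then to repackage the result through Lemma~\ref{lem:persistentL}.

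First, since $\Hom$ out of a direct sum is a product,
\begin{equation*}
\End_{\mu^\bG(*)}\Bigl(\bigoplus_{c \in \bR/\bG} T_c 1_\mu\Bigr) \;\simeq\; \prod_{c \in \bR/\bG} \Hom_{\mu^\bG(*)}\Bigl(T_c 1_\mu,\ \bigoplus_{c' \in \bR/\bG} T_{c'} 1_\mu\Bigr).
\end{equation*}
Applying the automorphism $T_{-c}$ and reindexing $d = c' - c$ identifies each factor with the same abelian group $\Hom_{\mu^\bG(*)}(1_\mu,\ \bigoplus_{d \in \bR/\bG} T_d 1_\mu)$. Since $1_\mu = \bigoplus_{g \in \bG} \bK_{t \geq g}$ is the free $\bG$-equivariant generator on $\bK_{t \geq 0}$, and $\bigoplus_{d \in \bR/\bG} T_d 1_\mu \simeq \bigoplus_{x \in \bR} \bK_{t \geq x}$, this common factor is identified with $\Hom(\bK_{t \geq 0},\ \bigoplus_{x \in \bR} \bK_{t \geq x})$.

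Second, this is precisely the expression that appears in the proof of Lemma~\ref{lem:almostisom1} (which treats the case $\bG = \bR$, where the target is already indexed by all of $\bR$). The two excision triangles used there --- the source triangle $\bK_{t < 0} \to \bK_\bR \to \bK_{t \geq 0}$ and the target triangle $\bigoplus_x \bK_{t < x} \to \bigoplus_x \bK_\bR \to \bigoplus_x \bK_{t \geq x}$ --- together with the ``$T^a$-annihilates-the-cokernel-by-truncating-to-$\bK_{-a < t}$'' trick, apply verbatim. This gives $H^0 \cong \Lambda_0$ with higher cohomology almost zero.

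Third, assembling the factors via Lemma~\ref{lem:persistentL},
\begin{equation*}
\End_{\mu^\bG(*)}\Bigl(\bigoplus_{c \in \bR/\bG} T_c 1_\mu\Bigr) \;\cong_\almost\; \prod_{c \in \bR/\bG} \Lambda_0 \;\cong\; L_0^\bG.
\end{equation*}
Finally, to upgrade this $\bK$-module isomorphism to an $L_0^\bG$-module (indeed $L_0^\bG$-algebra) identification, one checks that the canonical restriction $T_c 1_\mu \to T_{c+d} 1_\mu$ for $d \in \bR_{\geq 0}$ corresponds to the quiver generator $e_{c, d}$, so that composition matches the multiplication $e_{c+d, d'} \cdot e_{c, d} = e_{c, d+d'}$ from Section~2.

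The main subtlety is bookkeeping: in the second step, the Hom is computed in $\mu^\bG(*)$ rather than in $\mu^\bR(*)$, but the ``negative microsupport'' quotient and the $T^a$-truncation argument only see the real line structure, not the subgroup $\bG$, so the argument of Lemma~\ref{lem:almostisom1} ports over. The one point to verify with care is that the identification $\prod_c \Lambda_0 \cong L_0^\bG$ from Lemma~\ref{lem:persistentL} is compatible with the natural algebra structures (composition on the left, quiver multiplication on the right).
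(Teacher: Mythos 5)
Your proposal is correct and follows the same route as the paper: decompose the $\End$ as a product via the coproduct in the source, use $T_{-c}$-invariance to identify each factor with the non-equivariant $\Hom(\bK_{t\geq 0}, \bigoplus_{x\in\bR}\bK_{t\geq x})$ computed in Lemma~\ref{lem:almostisom1}, and then assemble via Lemma~\ref{lem:persistentL}, checking compatibility of the algebra structures at the end. The paper's proof is a terser version of exactly this argument (it writes the middle term as $\prod_{c\in\bR/\bG}\Hom_{\mu^{\bO}(*)}(\bK_{t\geq 0},\bigoplus_{c\in\bR}\bK_{t\geq c})$ and defers the algebra-structure check to "one can see"), so there is no substantive difference in approach.
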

\begin{proof}
By using the preceding lemma, we have a sequence of almost isomorphisms
\begin{equation}
    \begin{split}
            \End_{\mu^{\bG}(*)}( \bigoplus_{c\in \bR/\bG}T_c1_\mu)\cong_\almost \prod_{c\in \bR/\bG}\Hom_{\mu^{\bO}(*)}(\bK_{t\geq 0}, \bigoplus_{c\in \bR}\bK_{t\geq c})\cong_\almost \prod_{c\in \bR/\bG}\Lambda_0^\bG\cong_\almost L_0^\bG
    \end{split}
\end{equation}
By Lemma~\ref{lem:persistentL}, we get an isomorphism of the underlying $\bK$-modules. One can see that this isomorphism preserves the algebra structure.
\end{proof}

\subsection{Derived completeness}
\begin{lemma}\label{lem:derivedcompleteness}
For any $\cE, \cF\in \mu^\bG(*)$, we have $\Hom_{\mu^\bG(*)}(\bigoplus_{c\in \bR/\bG}T_c\cE,\cF)\in \Mod_\comp(L_0^\bG)$.
\end{lemma}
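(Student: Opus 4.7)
The plan is to reduce derived completeness to the vanishing of a single homotopy colimit in $\mu^\bG(*)$, then verify that vanishing by a stalkwise computation on the unit $1_\mu$.

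Set $X := \bigoplus_{c\in \bR/\bG} T_c\cE$ and $M := \Hom_{\mu^\bG(*)}(X,\cF)$. Since $\Mod_\comp(L_0^\bG) = \frakf^{-1}(\Mod_\comp(\Lambda_0))$, it suffices to show that $M$ is $T$-adically derived complete as a $\Lambda_0$-module, where I abbreviate $T := T^1$. By definition this means $\Hom_{\Lambda_0}(\Lambda_0[T^{-1}],M)\cong 0$. Using the sequential presentation $\Lambda_0[T^{-1}] \cong \varinjlim(\Lambda_0\xrightarrow{T}\Lambda_0\xrightarrow{T}\cdots)$ and passing the resulting limit through $\Hom$, this becomes
\begin{equation*}
  \Hom_{\Lambda_0}(\Lambda_0[T^{-1}],M)\cong \Hom_{\mu^\bG(*)}(Y,\cF),\qquad Y := \varinjlim(X\xrightarrow{T}X\xrightarrow{T}\cdots),
\end{equation*}
so it suffices to prove $Y\cong 0$ in $\mu^\bG(*)$.

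Next, I would exploit the convolution monoidal structure $\star$ of $\mu^\bG(*)$ with unit $1_\mu$. Translation invariance of $\star$ together with $1_\mu\star \cE\cong \cE$ give $X\cong Z\star \cE$, where $Z := \bigoplus_{c\in\bR/\bG} T_c 1_\mu\cong \bigoplus_{a\in \bR} \bK_{t\geq a}$, and the $L_0^\bG$-action on $X$ is induced by starring the action on $Z$ with $\cE$. Since $\star$ preserves colimits in each variable, $Y\cong \bigl(\varinjlim(Z\xrightarrow{T} Z\to\cdots)\bigr)\star \cE$, so it suffices to show that $\varinjlim(Z\xrightarrow{T} Z\xrightarrow{T}\cdots) = 0$ in $\mu^\bG(*)$.

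Finally, since the localization $\Sh^\bG(\bR_t)\to \mu^\bG(*)$ preserves colimits and filtered colimits of sheaves on $\bR_t$ are computed stalkwise, I would compute at a point $t_0\in \bR_t$. The $T$-action on $Z$ sends $\bK_{t\geq a}$ to $\bK_{t\geq a+1}$ via the natural restriction, so at stalks $Z_{t_0} = \bigoplus_{a\leq t_0}\bK$ is acted upon by the shift $\bK_a\mapsto \bK_{a+1}$, which vanishes as soon as $a+1 > t_0$. This shift is locally nilpotent, so the sequential colimit vanishes at every stalk, hence $\varinjlim(Z\xrightarrow{T} Z\to\cdots)= 0$ as required. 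The main subtlety will be verifying the compatibility of the $L_0^\bG$-action on $X$ with the one induced from $Z$ via $X\cong Z\star \cE$; this ultimately follows from the universal description of $L_0^\bG$ as $\End(Z)$ up to almost isomorphism provided by Lemma~\ref{lem:almostisom2}.
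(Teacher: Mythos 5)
Your proof is correct, and it takes a genuinely different route from the paper's, even though both arguments ultimately rest on the same elementary fact: a sequential colimit of half-line constant sheaves $\bK_{\{t\geq a\}}$ with $a\to+\infty$ vanishes, because at any fixed stalk the transition maps are eventually zero. The paper reaches this fact by passing the $T$-tower limit through the convolution internal hom $\cHom^{\star}$ and then evaluating on $\bK_{\{t\geq 0\}}$, so the limit over $T$ becomes a limit of $\Gamma(\bK_{\{t\geq c\}},\cG)$ and one invokes $\varinjlim_{c\to\infty}\bK_{\{t\geq c\}}\simeq 0$. You instead turn the derived $\Hom_{\Lambda_0}(\Lambda_0[T^{-1}],M)$ into $\Hom_{\mu^{\bG}(*)}(\varinjlim(X\xrightarrow{T}X\to\cdots),\cF)$, factor $X\cong Z\star\cE$ with $Z:=\bigoplus_{c}T_c 1_\mu\cong\bigoplus_{a\in\bR}\bK_{\{t\geq a\}}$, and then compute the $T$-colimit of $Z$ stalkwise. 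Your version has the advantage that the vanishing object $\varinjlim(Z\xrightarrow{T}Z\to\cdots)$ is computed once and for all and is independent of $\cE,\cF$, whereas the paper has to keep track of the auxiliary sheaf $\cHom^{\star}(\cE,\cF)$; the paper's version avoids needing the factorization $X\cong Z\star\cE$ and the ensuing compatibility of $T$-actions. Your one point of real care, which you flag yourself, is precisely that compatibility: the $T$-endomorphism of $X=\bigoplus_c T_c\cE$ inducing the $L_0^\bG$-module structure on $\Hom(X,\cF)$ must agree with $(T_Z)\star\mathrm{id}_\cE$ under $X\cong Z\star\cE$. This does hold because both are built from the canonical restriction morphisms $\bK_{\{t\geq a\}}\to\bK_{\{t\geq a+d\}}$, and the identification is functorial rather than merely ``up to almost isomorphism''; you should state this positively rather than appealing to Lemma~\ref{lem:almostisom2}, which only controls the endomorphism ring up to almost zero and is not needed here. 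One more small point to make explicit: the stalk computation is a computation in $\Sh(\bR_t)$ after forgetting equivariance, and since the forgetful functor and the localization $\Sh^{\bG}(\bR_t)\to\mu^{\bG}(*)$ both preserve colimits (and the former is conservative), vanishing of the underlying colimit of sheaves indeed implies vanishing in $\mu^{\bG}(*)$.
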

\begin{proof}
We first show the space $\Hom_{\mu^\bR(*)}(\cE, \cF)$ for $\cE, \cF$ is derived complete. 
In other words, it is enough to show the homotopy limit of the sequence
\begin{equation}
    \cdots\xrightarrow{T}\Hom(\cE, \cF)\xrightarrow{T}\Hom(\cE, \cF)\xrightarrow{T}\Hom(\cE, \cF).
\end{equation}
is zero~\cite[Cor 4.2.8]{DAGXII}. For the notation, we denote it by 
\begin{equation}
    \lim_{\substack{\longleftarrow\\ i\rightarrow \infty}}\Hom(\cE, \cF).
\end{equation}
By using the internal hom defined in \cite{Tam, WKBkuw}, we have
\begin{equation}
\begin{split}
     \lim_{\substack{\longleftarrow\\ i\rightarrow \infty}}\Hom_{\mu^\bR(*)}(\cE, \cF)&=\Hom_{\mu^\bR(*)}(\bigoplus_{c\in \bR}\bK_{t\geq c}, \lim_{\substack{\longleftarrow\\ i\rightarrow \infty}}\cHom^{\star_\bR}(\cE,\cF))\\
     &=\Hom_{\mu^\bO(*)}(\bK_{t\geq 0}, \lim_{\substack{\longleftarrow\\ i\rightarrow \infty}}\cHom^{\star_\bR}(\cE,\cF))\\
\end{split}
\end{equation}
where we use Lemma~\ref{lem:unitend}(4) for the second equality.

We set $\cHom^{\star_\bG}(\cE, \cF)=:\cG$. Here $\displaystyle{\lim_{\substack{\longleftarrow\\ i\rightarrow \infty}}\cG}$ is the homotopy limit of the sequence
\begin{equation}
    \cdots\xrightarrow{T} \cG\xrightarrow{T} \cG\xrightarrow{T} \cG.
\end{equation}
We then have
\begin{equation}
\begin{split}
    \Gamma(\bK_{\lc t\geq 0\rc},\lim_{\substack{\longleftarrow\\ i\rightarrow \infty}}\cG)
    &\cong \lim_{\substack{\longleftarrow\\ i\rightarrow \infty}}\Gamma(\bK_{\lc t\geq 0\rc }, \cG)\\
    &\cong \lim_{\substack{\longleftarrow\\ c\rightarrow \infty}}\Gamma(\bK_{\lc t\geq c\rc }, \cG)\\
    &\cong \Gamma(\lim_{\substack{\longrightarrow\\ c\rightarrow \infty}}\bK_{\lc t\geq c\rc }, \cG)\cong 0.
\end{split}
\end{equation}
Here, in the second equality, we use the following commutative diagram to transfer the $T$-action from $\cG$ to $\bK_{t\geq c}$:
\begin{equation}
    \xymatrix{
    \Gamma(\bK_{\lc t\geq 0\rc }, \cG)\ar[r]^{T\circ(-)}\ar[d]^\cong & \Gamma(\bK_{\lc t\geq 0\rc }, \cG)\ar[d]^{\cong} \\
    \Gamma(\bK_{\lc t\geq n\rc },\cG)\ar[r]&\Gamma(\bK_{\lc t\geq n+1\rc },\cG)
    }
\end{equation}
where the lower horizontal arrow is induced by the canonical morphism $\bK_{t\geq n}\to \bK_{t\geq n+1}$, and vertical arrows are induced by translations. This completes the proof.

For general $\bG$, we can equip $\bigoplus_{c\in \bR/\bG}T_c\cE$ with the $\bG$-equivariance  (which is the left adjoint of the forgetful functor $\mu^\bR(*)\to \mu^\bG(*)$. See Lemma ~\ref{lem:unitend} (4) for a related assertion). This structure induces $\Lambda_0$-action on $\Hom_{\mu^\bG(*)}(\bigoplus_{c\in \bR/\bG}T_c\cE,\cF)$, which is nothing but the structure of $\frakf(\Hom_{\mu^\bG(*)}(\bigoplus_{c\in \bR/\bG}T_c\cE, \cF))$. Hence the derived completeness of $\Hom_{\mu^\bG(*)}(\bigoplus_{c\in \bR/\bG}T_c\cE,\cF)$ is again the vanishing of 
the homotopy limit of the sequence
\begin{equation}
    \cdots\xrightarrow{T}\Hom(\bigoplus_{c\in \bR/\bG}T_c\cE, \cF)\xrightarrow{T}\Hom(\bigoplus_{c\in \bR/\bG}T_c\cE, \cF)\xrightarrow{T}\Hom(\bigoplus_{c\in \bR/\bG}T_c\cE, \cF),
\end{equation}
which can be shown by the same argument as above.
\end{proof}

\subsection{Morita functor}
By Lemma~\ref{lem:derivedcompleteness}, we have the Morita functor
\begin{equation}
   \frakA \colon \mu^\bG(*)\rightarrow \Mod_{\comp}(L_0^{\bG}); \cE\mapsto \Hom_{\mu^\bG(*)}(\bigoplus_{c\in \bR/\bG}T_{c}1_\mu, \cE).
\end{equation}

\begin{theorem}\label{thm:main}
    The functor $\frakA$ is an almost equivalence.
\end{theorem}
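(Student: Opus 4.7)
The plan is to verify the two parts of being an almost equivalence—almost fully faithful and almost essentially surjective—in turn.

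For almost fully faithfulness, the key observation is that the base case $\alpha = T_{c_0} 1_\mu$ is immediate from Lemma~\ref{lem:almostisom2}: the left-hand side of the comparison map
\[
\Hom_{\mu^\bG(*)}\Big(\bigoplus_{c\in\bR/\bG} T_{c}\alpha,\alpha'\Big) \to \Hom_{L_0^\bG}\Big(\bigoplus_{c\in\bR/\bG} T_{c}\frakA(\alpha), \frakA(\alpha')\Big)
\]
is $\frakA(\alpha')$ by definition (up to reindexing by $c_0$), while the right-hand side computes $\Hom_{L_0^\bG}(L_0^\bG,\frakA(\alpha')) \cong \frakA(\alpha')$ after applying the almost identification $\bigoplus_c T_c\frakA(1_\mu)\cong_\almost L_0^\bG$. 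I would then argue that the full subcategory of $\alpha\in\mu^\bG(*)$ for which the comparison is an almost isomorphism is closed under cones, retracts, and the sheaf-theoretic colimits that $\frakA$ sends to completed direct sums $\widehat\bigoplus$ in $\Mod_\comp(L_0^\bG)$ (using Lemma~\ref{lem:derivedcompleteness} to identify the coproduct in the target); combined with a generation statement for $\mu^\bG(*)$ by $\{T_c 1_\mu\}_{c\in\bR}$, this settles almost fully faithfulness.

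For almost essential surjectivity, given $M\in\Mod_\comp(L_0^\bG)$ I would build, one step at a time, a resolution
\[
\cdots\to\widehat{\bigoplus}_{j\in J}L_0^\bG\to\widehat{\bigoplus}_{i\in I}L_0^\bG\to M\to 0
\]
by completed free modules; each free module is, almost, $\frakA$ of a corresponding completed direct sum $\widehat\bigoplus T_{c_i}1_\mu$ in $\mu^\bG(*)$, and almost fully faithfulness on the generators lifts the differentials to the sheaf side. The totalization of the resulting simplicial diagram in $\mu^\bG(*)$ produces the required $\cE$ with $\frakA(\cE)\cong_\almost M$.

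The main obstacle I anticipate is controlling the interaction of $\frakA$ with infinite direct sums: the object $\bigoplus_{c} T_{c}1_\mu$ is not compact in $\mu^\bG(*)$, so $\frakA$ does not a priori commute with arbitrary colimits, and one has to pin down exactly which sheaf-theoretic colimits $\frakA$ turns into $\widehat\bigoplus$ on the module side. This is also the point at which the \emph{almost} in the statement becomes essential: as the proof of Lemma~\ref{lem:almostisom1} shows, the discrepancy between the naive and completed constructions is concentrated in higher $\mathrm{Ext}$'s that are annihilated by every $T^a$ with $a>0$, which vanish after passing to the almost quotient. A secondary but nontrivial step is the generation claim—that $\{T_c 1_\mu\}_{c\in\bR}$ generates $\mu^\bG(*)$ under the relevant completed colimits—which rests on the explicit geometric description of objects microsupported in $\bR_{>0}$ already used in \cite{Tam, WKBkuw}.
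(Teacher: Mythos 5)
Your proposal follows essentially the same route as the paper: generate $\mu^\bG(*)$ by the shifted units $T_c 1_\mu$, identify $\frakA$ of the generator with $L_0^\bG$ up to almost isomorphism via Lemma~\ref{lem:almostisom2}, and propagate full faithfulness and surjectivity from there.

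However, there is a genuine gap. The ``main obstacle'' you flag---that $\bigoplus_c T_c 1_\mu$ is not compact, so $\frakA$ does not a priori commute with coproducts---is not an anticipated difficulty to be noted and deferred; it \emph{is} the technical core of the argument, and you do not resolve it. The paper's Step 2 proves precisely the almost isomorphism
\begin{equation*}
\Hom\Big(\bigoplus_{c\in\bR/\bG}T_c 1_\mu,\ \bigoplus_{i\in I}T_{d_i}1_\mu\Big)\ \cong_\almost\ \widehat{\bigoplus}_{i\in I}\Hom\Big(\bigoplus_{c\in\bR/\bG}T_c 1_\mu,\ T_{d_i}1_\mu\Big),
\end{equation*}
by an explicit cohomology computation: one reduces to $\Hom(\bK_{t\geq 0},\bigoplus_I\bigoplus_c\bK_{t\geq c})$, shows $H^1$ of the intermediate term vanishes, and then identifies the failure of the comparison to be an isomorphism with a $\coker(\bigoplus\bK\rightarrow\widehat\bigoplus\bK)$ in degree one that is annihilated by every $T^a$ with $a>0$. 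Without this statement, neither the closure-under-colimits step for full faithfulness nor the lift-a-free-resolution step for essential surjectivity actually goes through. Your claim that ``the discrepancy $\dots$ is concentrated in higher $\mathrm{Ext}$'s annihilated by every $T^a$'' is correct in spirit but is only verified (in Lemma~\ref{lem:almostisom1}) for a single copy of $1_\mu$ against $\bigoplus_c T_c 1_\mu$; the coproduct case requires a separate argument, and that is exactly where the paper spends its effort.

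Two smaller points. First, your essential surjectivity argument---manually building a completed free resolution of $M$ and lifting differentials almost-isomorphism by almost-isomorphism---would require tracking coherence of those lifts when assembling the totalization, which is delicate; the paper sidesteps this by observing that $\frakA$ is cocontinuous and $L_0^\bG$ generates $\Mod_\comp(L_0^\bG)$, so almost full faithfulness on the generator already forces almost essential surjectivity. Second, the generation claim for $\mu^\bG(*)$ that you leave to a citation is handled in the paper by the clean one-two punch of Step 1 (conservativity: the right orthogonal of the generator vanishes because it forces the microsupport into $\tau\le 0$) combined with the Bousfield-localization/Schwede--Shipley argument in Step 3; it does not need the full geometric classification of objects microsupported in $\bR_{>0}$.
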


\begin{proof}[Proof of Theorem~\ref{thm:main}]

\subsection*{Step 1}
We first show the conservativity of the functor: If an object $\cE\in \mu^\bG(*)$ satisfies $\Hom_{\mu^\bG(*)}(\bigoplus_{c\in \bR/\bG}T_c1_\mu, \cE)=0$, then $\cE=0$. Suppose $\cE$ satisfies $\Hom_{\mu^\bG(*)}(\bigoplus_{c\in \bR/\bG}T_c 1_\mu, \cE)=0$. Since $\Cone(T^{c'}\colon 1_\mu\to 1_\mu)=\bigoplus_{c\in \bG} \bK_{c\leq t<c+c'}$ for any $c'>0$, we have
\begin{equation}
\begin{split}
        0&=\Hom_{\mu^\bG(*)}(\bigoplus_{c''\in \bR/\bG}T_{c''} \bigoplus_{c\in \bG} \bK_{c\leq t<c+c'}, \cE)\\ &\cong \Hom_{\Sh(\bR_t)}(\bigoplus_{c\in \bR/\bG}T_{\widetilde c} \bK_{c\leq t<c+c'}, \cE)\\
        &\cong \prod_{c\in \bR/\bG}\Hom_{\Sh(\bR_t)}(\bK_{c+\widetilde c\leq t<c+\widetilde c+c'}, \cE)
\end{split}
\end{equation}
where ${\widetilde c}$ is a representative of $c\in \bR/\bG$. Hence the microstalk of $\cE$ at any positive codirections vanish. Hence $\cE$ has non-positive microsupport. This implies $\cE=0$, since $\cE$ is an object of (equivariant) Tamarkin category.

\subsection*{Step 2}
We next prove the following:
\begin{claim}
    \begin{equation}
    \Hom(\bigoplus_{c\in \bR/\bG}T_c 1_\mu, \bigoplus_{i\in I}T_{d_i}1_\mu)\cong_\almost \widehat{\bigoplus}_{i\in I}\Hom(\bigoplus_{c\in \bR/\bG}T_c 1_\mu, T_{d_i}1_\mu)
\end{equation}
in $\Mod_{\comp}(L_0^{\bG})$.
\end{claim}
\begin{proof}[Proof of claim]
We consider the case of $\bG=\bR$. Other cases are similar.

We first prove 
    \begin{equation}\label{eq:firstequality}
    \Hom(T_c 1_\mu, \bigoplus_{i\in I}T_{d_i}1_\mu)\cong \widehat{\bigoplus}_{i\in I}\Hom(T_c 1_\mu, T_{d_i}1_\mu).
\end{equation}
We only consider the case when $c=0$, since other cases are similar. We first replace the left hand side with
\begin{equation}
    \Hom(1_\mu, \bigoplus_{i\in I}1_\mu)\cong \Hom(\bK_{t\geq 0}, \bigoplus_{I}\bigoplus_{c\in \bR}\bK_{t\geq c}).
\end{equation}
Then we have
\begin{equation}
\Hom(\bK_{t\geq 0}, \bigoplus_{I}\bigoplus_{c\in \bR}\bK_{t\geq c})\rightarrow
 \Hom(\bK_\bR, \bigoplus_{I}\bigoplus_{c\in \bR}\bK_{t\geq c})\rightarrow \Hom(\bK_{(-\infty, 0)}, \bigoplus_{I}\bigoplus_{c\in \bR}\bK_{t\geq c})
    \rightarrow.
\end{equation}
As in the proof of Lemma~\ref{lem:almostisom1}, we can see that $H^1( \Hom(\bK_\bR, \bigoplus_{I}\bigoplus_{c\in \bR}\bK_{t\geq c}))\simeq 0$. Then one can go to the cohomology exact sequence as
\begin{equation}
0\rightarrow 
H^0\Hom(\bK_{t\geq 0}, \bigoplus_{I}\bigoplus_{c\in \bR}\bK_{t\geq c})
    \rightarrow \widehat{\bigoplus}_{c\in (-\infty,+\infty),I}\bK\rightarrow \widehat{\bigoplus}_{c\in (-\infty,0),I}\bK \rightarrow H^1\Hom(\bK_{t\geq 0}, \bigoplus_{I}\bigoplus_{c\in \bR}\bK_{t\geq c})\rightarrow 0.
\end{equation}
Here $\widehat{\bigoplus}_{c\in (-\infty,+\infty),I}\bK$ is the subspace of $\prod_{c\in (-\infty,+\infty),I}\bK$ spanned by the elements of the form $\prod_{c\in (-\infty, +\infty), i\in I}a_{c,i}$ ($a_{c,i}\in \bK$) satisfying 
\begin{equation}
    \#\lc (c,i)\in (-\infty, N)\times I\relmid a_{c,i}\neq 0 \rc<+\infty
\end{equation}
for any $N\in \bR$. Similarly, $\widehat{\bigoplus}_{c\in (-\infty,0),I}\bK$ is the subspace of $\prod_{c\in (-\infty,0),I}\bK$ spanned by the elements of the form $\prod_{c\in (-\infty, 0), i\in I}a_{c,i}$ ($a_{c,i}\in \bK$) satisfying 
\begin{equation}
    \#\lc (c,i)\in (-\infty, N)\times I\relmid a_{c,i}\neq 0 \rc<+\infty
\end{equation}
for any $N\in \bR_{<0}$. This computation is done in exactly same manner as the proof of Lemma~\ref{lem:unitend}.3, where its proof can be found in \cite{WKBkuw}.

Hence we conclude that
\begin{equation}
    H^i\Hom(\bK_{t\geq 0}, \bigoplus_{I}\bigoplus_{c\in \bR}\bK_{t\geq c})\cong \begin{cases}
        & \Lambda_0\text{ if $i=0$}\\
        & \coker\lb \bigoplus_{(-\infty,0),I} \bK \rightarrow \widehat{\bigoplus}_{(-\infty,0),I} \bK\rb\text{ if $i=1$}\\
        &0\text{ otherwise}.
    \end{cases}
\end{equation}
where the morphism in the second line is a natural one. Then the almostization kills the degree one morphisms.

By (\ref{eq:firstequality}), we have
\begin{equation}
    \Hom(1_\mu, \bigoplus_{i\in I}T_{d_i}1_\mu)\cong_\almost \widehat \bigoplus_{i\in I}\Hom(T_c 1_\mu, T_{d_i}1_\mu).
\end{equation}
This completes the proof.
\end{proof}

\subsection*{Step 3}
With Step 1, the argument of \cite[Proposition 5.4.5]{GaitsgoryRosenblyum} immediately shows that $\bigoplus_{c\in \bR/\bG}T_c1_\mu$ generates the whole category by colimits. Since $\frakA(\bigoplus_{c\in \bR/\bG}T_c1_\mu)\cong_\almost L_0^\bG$ by Lemma~\ref{lem:almostisom2} and 
\begin{equation}
\End(\bigoplus_{c\in \bR/\bG}T_c1_\mu)\cong_\almost L_0^\bG\cong \End(L_0^\bG),
\end{equation}
we conclude that the functor $\frakA$ is almost fully faithful by using Step 2.

Also, since $L_0^\bG$ is a generator of $\Mod_{\comp}(L_0^\bG)$ and $\frakA$ is cocontinuous, the almost essential surjectivity follows. This completes the proof.
\end{proof}

\begin{proposition}[The inverse functor]\label{prop:inverse}
    The functor $\frakB\colon \Mod_\comp(L_0^\bG)\to \mu^\bG(*); M\mapsto M\otimes_{L_0^\bG}\lb \bigoplus_{c\in \bR/\bG} T_c1_\mu\rb$ is a left adjoint of $\frakA$ and an almost equivalence. Moreover $\frakA\circ \frakB(M)\cong_{\almost}M, \frakB\circ \frakA(\cE)\cong_\almost \cE$ for any $M,\cE$.
\end{proposition}
\begin{proof}
    For $M, N\in \Mod_\comp(L_0^\bG)$, we have
    \begin{equation}
        \Hom(M, \frakA(\cE))=\Hom(M, \Hom(\bigoplus_{c\in \bR/\bG}T_c1_\mu, \cE))\cong \Hom(M\otimes_{L_0^\bG}\lb \bigoplus_{c\in \bR/\bG} T_c1_\mu\rb, \cE).
    \end{equation}
    Hence $\frakB$ is a left adjoint of $\frakA$.
    
    The assertion $\frakA\circ \frakB(M)\cong_\almost M$ follows from the proof of the almost essential surjectivity of $\frakA$. We then have
    \begin{equation}
        \Hom(\frakB(M), \frakB(N))\cong \Hom(M, \frakA\circ \frakB(N))\cong_\almost \Hom(M,N),
    \end{equation}
    which shows the almost fully faithfulness of $\frakB$. Since $\frakA$ is almost fully faithful, we have
    \begin{equation}
        \Hom(\frakB\circ \frakA(\cE), \cF)\cong \Hom(\frakA(\cE), \frakA(\cF))\cong_{\almost} \Hom(\cE,\cF)
    \end{equation}
    This shows $\frakB\circ \frakA(\cE)\cong_\almost \cE$. This also show the almost essential surjectivity of $\frakB$. This completes the proof.
\end{proof}

\section{Global version}
\subsection{Reminders on the Lurie tensor product}
We follow Volpe's exposition~\cite{Volpe}. Let $\Mod_{\Mod(\bK)}(Cat_{comp})$ be the category of the cocomplete $\bK$-linear categories and the morphisms are $\bK$-linear cocontinuous functors. For $\bK$-linear  cocomplete categories $\cC, \cD$, there exists a $\bK$-linear cocomplete category $\cC\otimes_L \cD$ with a functor $\cC\times \cD\rightarrow \cC\otimes_L \cD$ satisfying 
\begin{equation}
    Fun_{!\times !}(\cC\times \cD, \cE)\cong Fun_{!}(\cC\otimes_L \cD, \cE)
\end{equation}
where the left hand side $Fun_{!\times !}$ denotes the $\bK$-linear functors preserving variable-wise colimits and the right hand side $Fun_!$ denotes the $\bK$-linear functors preserving colimits. The resulting category $\cC\otimes_L \cD$ is called Lurie's tensor product. In the following, we simply denote $\otimes_L$ by $\otimes$.

We will mainly use the following properties.
\begin{lemma}[{\cite[Corollary 2.30]{Volpe}}]\label{lem:Lurietensor}Let $X$ and $Y$ be manifolds. 
    \begin{enumerate}
        \item We have an equivalence. $\Sh(X,\bK)\otimes\Sh(Y,\bK)\cong \Sh(X\times Y,\bK)$.
        \item Let $\cC$ be a presentable $\bK$-linear category. Then we have $\Sh(X,\bK)\otimes\cC\cong \Sh(X,\cC)$.
    \end{enumerate}
\end{lemma}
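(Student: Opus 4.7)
The plan is to prove part (2) first — identifying $\cC$-valued sheaves with the Lurie tensor product — and then derive part (1) as the special case $\cC = \Sh(N,\bK)$. Both statements are manifestations of the universal property of $\otimes$ in the $\infty$-category $\mathrm{Pr}^{L}_{\bK}$ of presentable $\bK$-linear categories, combined with descent on $M$ and $N$.

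For part (2), I would begin at the presheaf level. Setting $\mathrm{PSh}(M,\bK) := \Fun(\Open(M)^{op}, \Mod(\bK))$, the universal property of free presentable categories identifies $\mathrm{PSh}(M,\bK)$ with the free presentable $\bK$-linear category on $\Open(M)^{op}$, so tensoring with $\cC$ yields
\[
\mathrm{PSh}(M,\bK) \otimes \cC \simeq \Fun(\Open(M)^{op}, \cC) =: \mathrm{PSh}(M,\cC).
\]
Next, $\Sh(M,\bK) \hookrightarrow \mathrm{PSh}(M,\bK)$ is a reflective localization at the class $S$ of morphisms exhibiting \v{C}ech descent. Lurie's tensor product preserves reflective localizations in $\mathrm{Pr}^{L}$, so $\Sh(M,\bK) \otimes \cC$ is the localization of $\mathrm{PSh}(M,\cC)$ at $S \otimes \mathrm{id}_{\cC}$. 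Unwinding definitions, an object of $\mathrm{PSh}(M,\cC)$ is local for this class precisely when it satisfies \v{C}ech descent as a $\cC$-valued presheaf, i.e., when it defines an object of $\Sh(M,\cC)$.

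For part (1), apply (2) with $\cC = \Sh(N,\bK)$ to obtain $\Sh(M,\bK) \otimes \Sh(N,\bK) \simeq \Sh(M,\Sh(N,\bK))$. It then suffices to identify the right-hand side with $\Sh(M \times N, \bK)$. This is a Fubini-type statement: since open rectangles $U \times V$ form a basis of the product topology, a $\bK$-sheaf $\cF$ on $M \times N$ is determined by its restriction to this basis, and the assignment $U \mapsto (V \mapsto \cF(U \times V))$ visibly lands in $\Sh(N,\bK)$ and satisfies descent in $U$. Conversely, any sheaf on $M$ valued in $\Sh(N,\bK)$ can be recovered as a sheaf on $M \times N$ by evaluating on rectangles and then sheafifying. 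This establishes the equivalence $\Sh(M,\Sh(N,\bK)) \simeq \Sh(M \times N, \bK)$.

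The main obstacle I anticipate is the compatibility of the Lurie tensor product with the reflective localization defining sheaves: concretely, one must check both that tensoring with $\cC$ preserves local objects and that the tensored class $S \otimes \mathrm{id}_{\cC}$ cuts out exactly the descent condition on $\cC$-valued presheaves. This step is essentially the Bousfield-localization-compatibility of $\otimes$ on $\mathrm{Pr}^{L}$, and it is where the argument depends genuinely on the structure of $\mathrm{Pr}^{L}_{\bK}$ rather than on ordinary additive category theory; the technical details are carried out in \cite{Volpe}.
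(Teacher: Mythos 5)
The paper does not actually prove this lemma; it simply cites Volpe~\cite{Volpe}, so there is no ``paper's proof'' to compare against --- your sketch is filling in the cited reference. With that said, your outline does reproduce the strategy of the cited proof: establish the universal-property statement (2) first by tensoring the free presentable $\bK$-linear category $\Fun(\Open(M)^{op},\Mod(\bK))$ and then passing to the descent localization, using that the Lurie tensor product in $\mathrm{Pr}^L$ is compatible with accessible localizations, and then derive (1) as a Fubini corollary. This is correct in outline.

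The step you should be more careful about is the Fubini identification $\Sh(M,\Sh(N,\bK))\simeq\Sh(M\times N,\bK)$ in part (1). ``Visibly lands in $\Sh(N,\bK)$'' and ``then sheafifying'' conceals the actual content: you need the comparison between sheaves on the full product topology and sheaves defined on the basis of rectangles $U\times V$, i.e.\ that the Grothendieck topology on $\Open(M\times N)$ is generated, up to the same localization, by the rectangle covers. This basis-comparison lemma is where the argument genuinely uses properties of the spaces. Moreover, the manifold hypothesis is not decorative: the Lurie tensor product produces the \v{C}ech (non-hypercomplete) sheaf category, and for this to agree with the usual derived category $\Sh(-,\bK)$ one needs hypercompleteness, which holds for manifolds (finite covering dimension) but fails for general locally compact spaces. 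A similar caveat applies to part (2): when you localize $\mathrm{PSh}(M,\cC)$ at $S\otimes\mathrm{id}_\cC$ you must verify the local objects are exactly the $\cC$-valued sheaves, which again rests on the well-behaved basis structure of a manifold. These are precisely the technical points Volpe carries out, so your sketch is a correct roadmap, but those two steps are the ones that would need to be fleshed out and are not automatic from general $\mathrm{Pr}^L$ formalism alone.
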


\subsection{Tamarkin-type category}Let $X$ be a manifold. We consider the category $\Sh^{\bG}(X\times\bR_t,\bK)$ of the equivariant $\bK$-module sheaves on $X\times \bR_t$ with respect to the discrete $\bG$-action on the right component. Again, microsupport for each object of $\Sh^{\bG}(X\times\bR_t,\bK)$ is defined through the forgetful functor forgetting the equivariancy.

We denote the subcategory spanned by the object whose microsupport contained in $T^*X\times \bR\times \bR_{\leq 0}$ by $\Sh^{\bG}_{\leq 0}(X\times \bR_t,\bK)$. We set
\begin{equation}
   \mu^{\bG}(T^*X):= \Sh^{\bG}_{>0}(X\times \bR_t,\bK):=\Sh^{\bG}(X\times \bR_t,\bK)/\Sh^{\bG}_{\leq 0}(X\times \bR_t,\bK).
\end{equation}
Then $\mu^\bG(T^*X)$ is defined over $\Mod(L_0^\bG)$.

The following is known (cf. \cite{IK, KSZ}):
\begin{lemma}[Lurie tensor product]\label{lem:lurie}
    \begin{equation}
        \mu^\bG(T^*X)\cong \Sh(X,\bK)\otimes \mu^\bG(*).
    \end{equation}
\end{lemma}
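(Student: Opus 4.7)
My plan is to reduce this to the combination of (a) compatibility of equivariant sheaves on a product with Lurie's tensor product, and (b) the stability of presentable localizations under Lurie's tensor product.

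First, I would apply Lemma~\ref{lem:Lurietensor}(2) with $\cC=\Sh^\bG(\bR_t,\bK)$ to get
\begin{equation}
\Sh(M,\bK)\otimes \Sh^\bG(\bR_t,\bK)\cong \Sh(M,\Sh^\bG(\bR_t,\bK)).
\end{equation}
Since the discrete $\bG$-action is only on the $\bR_t$ factor, the right-hand side identifies with $\Sh^\bG(M\times \bR_t,\bK)$: objects of $\Sh(M,\Sh^\bG(\bR_t,\bK))$ are by definition families of $\bG$-equivariant sheaves on $\bR_t$ parametrized by $M$, which is exactly the equivariant sheaf category on $M\times \bR_t$ with the trivial action in the $M$-direction.

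Second, I would identify $\Sh^\bG_{\leq 0}(M\times\bR_t,\bK)$ with the image of $\Sh(M,\bK)\otimes \Sh^\bG_{\leq 0}(\bR_t,\bK)$ under the equivalence of the previous step. The microsupport condition defining $\Sh^\bG_{\leq 0}$ only constrains the covector in the $\bR_t$-direction, and is therefore pointwise in $M$. Concretely, I would argue both inclusions using generators: for $\cF\in \Sh(M,\bK)$ and $\cG\in \Sh^\bG_{\leq 0}(\bR_t,\bK)$, the microsupport estimate for the external tensor product gives $\musupp(\cF\boxtimes\cG)\subset \musupp(\cF)\times \musupp(\cG)\subset T^*M\times \bR_t\times \bR_{\leq 0}$, and conversely any object of $\Sh^\bG_{\leq 0}(M\times\bR_t,\bK)$ is a colimit of such external products by the density of external tensor products in $\Sh(M\times\bR_t,\bK)$. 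Alternatively, and more conceptually, the condition of having non-positive microsupport can be expressed as left orthogonality to the shifted generators $\bK_M\boxtimes \bK_{t\geq c}$, which is manifestly preserved by the tensor decomposition. Combining this with the general fact that Lurie tensor product preserves presentable Verdier quotients (because $-\otimes \Sh(M,\bK)$ is cocontinuous, hence sends the reflective subcategory $\Sh^\bG_{\leq 0}(\bR_t,\bK)\subset \Sh^\bG(\bR_t,\bK)$ to a reflective subcategory with the same quotient), yields
\begin{equation}
\mu^\bG(T^*M)\cong \Sh(M,\bK)\otimes \mu^\bG(*).
\end{equation}

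The main obstacle is the second step, specifically the reverse inclusion identifying the non-positive microsupport subcategory in the tensor product. The cleanest route is probably to reformulate the microsupport condition via orthogonality to $\bK_M\boxtimes \bK_{t\geq c}$, so that one never has to manipulate microsupport estimates through the Lurie tensor product directly; this reduces the question to the cocontinuity of $\Sh(M,\bK)\otimes -$ and a comparison of right orthogonals, which is standard in the presentable setting.
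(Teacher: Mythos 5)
The paper states this lemma with only a citation to \cite{IK, KSZ} and gives no proof, so there is no in-paper argument to compare against; your proposal supplies one along standard lines and is sound in outline. Two places deserve more care. First, the identification $\Sh(M,\Sh^\bG(\bR_t,\bK))\cong \Sh^\bG(M\times \bR_t,\bK)$ is not purely definitional: the equivariant category $\Sh^\bG(\bR_t,\bK)$ is a limit over (the simplicial nerve of) $\bG$ of copies of $\Sh(\bR_t,\bK)$, and passing $\Sh(M,\bK)\otimes -$ through this limit uses that $\Sh(M,\bK)$ is dualizable in $\Pr^L_\bK$ for $M$ a manifold (equivalently, that $\Sh(M,-)$ commutes with limits of coefficient categories). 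Second, your density argument for the reverse inclusion does not work as stated: density of external products in $\Sh(M\times\bR_t,\bK)$ only expresses an object as a colimit of $\cF\boxtimes\cG$, with no control forcing $\cG\in\Sh^\bG_{\leq 0}(\bR_t,\bK)$. You correctly flag this and propose the orthogonality reformulation, which is the right fix; just note that the non-positive microsupport condition is \emph{right} orthogonality to the generators $\bK_U\boxtimes\bK_{t\geq c}$ (i.e.\ $\Hom(\bK_U\boxtimes\bK_{t\geq c},-)\simeq 0$), not left. A slightly cleaner packaging of the same idea: Tamarkin's projector $\bigoplus_{c}\bK_{t\geq c}\star_t(-)$ is a cocontinuous idempotent acting only in the $\bR_t$-variable, hence is carried along by $\Sh(M,\bK)\otimes-$; its image and kernel on $\Sh^\bG(M\times\bR_t,\bK)$ are precisely $\mu^\bG(T^*M)$ and $\Sh^\bG_{\leq 0}(M\times\bR_t,\bK)$, which gives both subcategory identifications and the quotient statement at once, without separately invoking the stability of Verdier quotients under tensoring.
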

We then have the following:

\begin{corollary}\label{cor:main}
    We have an almost embedding:
    \begin{equation}
    \mu^\bG(T^*X)\hookrightarrow_\almost \Sh(X,L_0^\bG).
    \end{equation}
    This functor will also be called $\frakA$ in the following.
\end{corollary}
\begin{proof}
Since $\Mod_\comp(L_0^\bG)\hookrightarrow \Mod(L_0^\bG)$ is a right adjoint, it induces
\begin{equation}
     \Sh(X,\Mod_\comp(L_0^\bG))\hookrightarrow  \Sh(X,\Mod(L_0^\bG)).
\end{equation}
Since $X$ is a manifold (in particular, hypercomplete), the category $ \Sh(X,\Mod(L_0^\bG))$ gets identified with the derived category $\Sh(X, L_0^\bG)$ of sheaves on $X$ whose values are $L_0^\bG$-modules. Hence, to finish the proof, it is enough to show $\mu^\bG(T^*X)\cong_\almost  \Sh(X,\Mod_\comp(L_0^\bG))$.

By Lemma~\ref{lem:Lurietensor} and Lemma~\ref{lem:lurie} and the presentability of $\mu^\bG(*)$, we have $\mu^\bG(T^*X)\cong \Sh(X, \mu^\bG(*))$. So, the problem is reduced to showing that $\Sh(X, \mu^\bG(*))\cong_\almost \Sh(X,\Mod_\comp(L_0^\bG))$. 

The functor $\frakA$ from Theorem~\ref{thm:main}, induces a functor $\Sh(X, \mu^\bG(*))\rightarrow \Sh(X,\Mod_\comp(L_0^\bG))$, which will also be denoted by $\frakA$. Similarly, we consider $\frakB\colon \Sh(X,\Mod_\comp(L_0^\bG))\to \Sh(X, \mu^\bG(*))$. For any $\cE\in \Sh(X,\Mod_\comp(L_0^\bG))$, the counit morphism $\cE\to \frakA\circ \frakB(\cE)$ is an almost isomorphism. Hence $\frakA$ is almost essentially surjective. For $\cE, \cF\in \Sh(X,\Mod_\comp(L_0^\bG))$,
\begin{equation}
    \Hom(\frakA(\cE), \frakA(\cF))\cong \Hom(\frakB\circ \frakA(\cE),\cF).
\end{equation}
Since $\frakB\circ \frakA(\cE)\cong_\almost \cE$, we conclude $\Hom(\frakB\circ \frakA(\cE),\cF)\cong_\almost\Hom(\cE,\cF)$. This shows the almost fully faithfulness of $\frakA$. 
\end{proof}

\subsection{Sheaf quantizations}
Let $X$ be a manifold. We denote the cotangent coordinate of $\bR_t$ by $\tau$.
We set 
\begin{equation}
    \lc \tau>0\rc:=\lc (p, (t, \tau))\in T^*X\times T^*\bR_t\relmid \tau>0\rc.
\end{equation}
It is known that $\SS(\cE)\cap \{\tau>0\}$ is well-defined for $\cE\in \mu^\bG(T^*X)$ where $\SS(\cE)$ is the microsupport of the underlying sheaf. We set
\begin{equation}
    \begin{split}
        \rho&\colon \lc \tau>0\rc \rightarrow T^*X; (p,t,\tau)\mapsto \tau^{-1}p\\
        &\musupp(\cE):=\text{the closure of }\rho(\SS(\cE)\cap \lc\tau>0\rc).
    \end{split}
\end{equation}

\begin{definition}[Sheaf quantization]
An object of $\mu^\bG(T^*X)$ is a sheaf quantization of a Lagrangian submanifold $L$ if 
\begin{enumerate}
    \item $\musupp(\cE)=L$, and
    \item the microstalks are finite dimensional.
\end{enumerate}
\end{definition}
For the construction and properties of sheaf quantizations, see the companion paper~\cite{IK}.
We denote the full subcategory of $\mu^\bG(T^*X)$ consisting of sheaf quantizations of projection-finite end-conic Lagrangians by $\SQ^\bG(T^*X)$.

\begin{corollary}\label{lem:embeddingSQ}
    We have an almost embedding
    \begin{equation}
        \SQ^\bG(T^*X)\hookrightarrow_\almost \Sh(X, L_0^\bG).
    \end{equation}
\end{corollary}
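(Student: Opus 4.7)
The plan is to derive this as a direct restriction of Corollary~\ref{cor:main}. By construction, $\SQ^\bG(T^*M)$ is defined as the full subcategory of $\mu^\bG(T^*M)$ whose objects are sheaf quantizations of projection-finite end-conic Lagrangians (i.e.\ objects whose non-conic microsupport is such a Lagrangian and whose microstalks are finite-dimensional). This is a property-cut on objects only: the Hom-spaces computed in $\SQ^\bG(T^*M)$ coincide with those computed in $\mu^\bG(T^*M)$.

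First, I would invoke Corollary~\ref{cor:main} to obtain the almost embedding
\begin{equation}
    \frakA \colon \mu^\bG(T^*M) \hookrightarrow_\almost \Sh(M, L_0^\bG).
\end{equation}
Second, I would restrict $\frakA$ along the inclusion $\SQ^\bG(T^*M) \hookrightarrow \mu^\bG(T^*M)$. Since almost fully faithfulness is a pointwise condition on morphism spaces (for any $\alpha, \alpha'$, the induced map on $\Hom(\bigoplus_{c \in \bR/\bG} T_c \alpha, \alpha')$ is an almost isomorphism in $\Mod(L_0^\bG)$), it is manifestly inherited by any full subcategory of the source. The shift operators $T_c$ act on $\SQ^\bG(T^*M)$ because shifting a sheaf quantization in the $t$-direction preserves both the microsupport condition and finite-dimensionality of microstalks, so the relevant $L_0^\bG$-module structure on Hom-spaces is intrinsic to the subcategory.

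There is no serious obstacle: the only thing to verify is that the restriction is well-defined as a morphism of categories over $L_0^\bG$ in the sense of the paper, which amounts to the closure of $\SQ^\bG(T^*M)$ under the shift functors $T_c$ noted above. Essential surjectivity is not claimed (we only assert an embedding), which is consistent with the fact that the image of $\SQ^\bG(T^*M)$ inside $\Sh(M, L_0^\bG)$ picks out only those derived-complete sheaves of $L_0^\bG$-modules arising from Lagrangian sheaf quantizations.
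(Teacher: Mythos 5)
Your argument is correct and is essentially the only reasonable route: Corollary~\ref{lem:embeddingSQ} is stated in the paper as an immediate consequence of Corollary~\ref{cor:main}, obtained by restricting the almost embedding $\frakA$ along the full inclusion $\SQ^\bG(T^*M)\hookrightarrow\mu^\bG(T^*M)$, exactly as you describe. The only point you flag that the paper leaves implicit --- that $\SQ^\bG(T^*M)$ is stable under the shift functors $T_c$, so that the $L_0^\bG$-module structure on Hom-spaces makes sense intrinsically --- is a correct and worthwhile observation (shifting in the $t$-direction leaves $\musupp$ and the microstalks unchanged), so your proposal matches the intended proof while filling in a small unstated detail.
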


\subsection{Variant 1: Energy cutoff}
We sometimes would like to discuss the energy cutoff setup.

Let $X$ be a manifold and $\bR_{s<c}:=(-\infty,c)$ for $c>0$. Then we run the above theory to get $\mu^\bG(T^*X\times T^*\bR_{s<c})$. 
We consider the subcategory $\mu^\bG_{<c}(T^*X)$ consisting of the objects satisfying 
\begin{equation}
\begin{split}
    \SS(\cE)\cap \lc \tau>0\rc\subset & \lc (p, s, 0)\in (T^*X\times T^*\bR_t)\times T^*\bR_{s<c}\relmid s\geq 0\rc \\
    &\cup \lc (p', t, \tau, s, \sigma)\in T^*X\times T^*\bR_t\times T^*\bR_{s<c}\relmid s\geq 0, \tau=-\sigma\rc \\
    &\cup \lc (p',t,\tau, s, \sigma) \relmid s= 0, 0 \leq\tau \leq -\sigma\rc.
\end{split}
\end{equation}
We have \begin{equation}
    \mu^\bG_{<c}(T^*X)\cong \Sh(X,\bK)\otimes \mu^\bG_{<c}(*),
\end{equation}
see \cite{IK, KSZ}.

This category is a natural living place of sheaf quantizations of ``obstructed" Lagrangian branes. Those are realized as doubling movie sheaf quantization of Lagrangian; objects satisfying 
\begin{equation}
\begin{split}
    \SS(\cE)\cap \lc \tau>0\rc\subset &\lc (p, s, 0)\in (T^*X\times T^*\bR_t)\times T^*\bR_{s<c}\relmid p\in A, s\geq 0\rc \\
    &\cup \lc (p', t, \tau, s, \sigma)\in T^*X\times T^*\bR_t\times T^*\bR_{s<c}\relmid (p', t-s, \tau)\in A, s\geq 0, \tau=-\sigma\rc\\
    &\cup \lc (p',t,\tau, s, \sigma) \relmid s= 0, 0 \leq\tau \leq -\sigma\rc.
\end{split}
\end{equation}
for some Lagrangian subset $A\subset T^*X\times \bR_t\times \bR_{\tau>0}$. As obstructed Lagrangian Floer complex is defined over $\Lambda_0/T^c\Lambda_0$, the parallel story on the sheaf side can be discussed as a variant of the main story of this paper:
\begin{lemma}
    \begin{equation}
        \frakB_c\colon \Mod(L_0^\bG/T^cL_0^\bG)\hookrightarrow_\almost \mu^\bG_{<c}(*) 
    \end{equation}
    The right adjoint is given by $\frakA_c:=\Hom(\bigoplus_{c'\in \bR/\bG}T_{c'} 1_{\mu,c},-)$. We also have $\frakA_c\circ \frakB_c(M)\cong_\almost M$ for any $M$.
\end{lemma}
\begin{proof}
We set 
\begin{equation}
    \mu^\bG_{<c}(*)\ni 1_{\mu, c}:=\bigoplus_{c'\in \bG}\bK_{\lc (s,t)\relmid 0<s<c, c'\leq t< s+c' \rc}.
\end{equation}
By mimicking the argument in \cite{WKBkuw} and Lemma~\ref{lem:almostisom1}, one can see that the endmorphism is almost isomorphic to $L_0^\bG/T^cL_0^\bG$.
We then have a functor
\begin{equation}
    \Mod(L_0^\bG/T^cL_0^\bG)\rightarrow \mu^\bG_{<c}(*) ; M\mapsto M\otimes 1_{\mu,c}.
\end{equation}
Then this is almost fully faithful. The claim about the adjoint can be proved in the same way as Proposition~\ref{prop:inverse}.
\end{proof}

\begin{remark}
    The functor is not essentially surjective. We give an example which are not in the essential image. Consider the following nonzero infinite complex 
    \begin{equation}
        \cdots\xrightarrow{T^{1/2}}1_{\mu,1}\xrightarrow{T^{1/2}}1_{\mu,1}\xrightarrow{T^{1/2}}1_{\mu,1}\xrightarrow{T^{1/2}}\cdots  \end{equation}
One can see that this is right orthogonal to $1_{\mu,1}$. Indeed, if one apply $\Hom(1_{\mu,1},-)$ to the sequence the resulting sequence is
    \begin{equation}
        \cdots\xrightarrow{T^{1/2}}\Lambda_0/T\Lambda_0\xrightarrow{T^{1/2}}\Lambda_0/T\Lambda_0\xrightarrow{T^{1/2}}\Lambda_0/T\Lambda_0\xrightarrow{T^{1/2}}\cdots,\end{equation}
        which is acyclic. Hence $1_{\mu,1}$ does not generate the whole category. The author would like to thank T. Asano for asking a related question on the draft version.
\end{remark}

We immediately have the following.
\begin{corollary}
    \begin{equation}
        \mu^\bG_{<c}(T^*X)\hookleftarrow_\almost \Sh(X,\Mod(L_0^\bG/T^cL_0^\bG)).
    \end{equation}
    In particular, if $\bG=\bR$, we have
    \begin{equation}
        \mu^\bR_{<c}(T^*X) \hookleftarrow_\almost  \Sh(X, \Lambda_0/T^c\Lambda_0).
    \end{equation}
\end{corollary}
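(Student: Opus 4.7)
The plan is to bootstrap from the preceding lemma by applying the Lurie tensor product with $\Sh(N,\bK)$. Starting from the almost embedding $\Mod(L_0^\bG/T^aL_0^\bG)\hookrightarrow_\almost \mu^\bG_{<a}(*)$ already established, I would tensor both sides with $\Sh(N,\bK)$. The right-hand side becomes $\Sh(N,\bK)\otimes \mu^\bG_{<a}(*)$, which by the energy-cutoff analogue of the Lurie decomposition (recorded immediately before the lemma) is $\mu^\bG_{<a}(T^*N)$. The left-hand side becomes $\Sh(N,\bK)\otimes \Mod(L_0^\bG/T^aL_0^\bG)$, which by Lemma~\ref{lem:Lurietensor}(2) is canonically $\Sh(N, \Mod(L_0^\bG/T^aL_0^\bG))$. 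Composing these equivalences with $\mathrm{id}\otimes$ of the lemma's embedding produces the desired functor.

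The substantive point is to verify that Lurie's tensor product preserves almost fully faithfulness. Given a cocontinuous almost embedding $F\colon \cC_1 \hookrightarrow_\almost \cC_2$ of presentable $\bK$-linear categories, one must check that $\mathrm{id}\otimes F\colon \Sh(N,\bK)\otimes \cC_1 \to \Sh(N,\bK)\otimes \cC_2$ remains almost fully faithful. On pure tensors $\cF_i\boxtimes m_i$ with $\cF_i\in \Sh(N,\bK)$ and $m_i\in \cC_1$, the Hom decomposes compatibly with the $\cC_1$-factor, so the almost isomorphism on $\Hom_{\cC_1}(m_1,m_2)$ transports to an almost isomorphism of the global Hom; since both sides are generated under colimits by such pure tensors and the subcategory of almost zero modules forms a tensor ideal stable under filtered colimits, the general case follows by colimit extension.

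For the essential image, I would observe that the right adjoint to the almost embedding in the lemma is the Morita-type functor $\Hom(\bigoplus_{c\in \bR/\bG}T_c 1_{\mu,a}, -)$, and tensoring with $\Sh(N,\bK)$ produces a parametrized version of this Morita functor on $\mu^\bG_{<a}(T^*N)$; this parametrized functor realizes the left inverse up to almost equivalence, as required for the embedding symbol $\hookleftarrow_\almost$ in the corollary statement. The specialization to $\bG=\bR$ is immediate: since $L_0^\bR = \Lambda_0$, the quotient $L_0^\bR/T^a L_0^\bR$ is $\Lambda_0/T^a\Lambda_0$, and $\Sh(N,\Mod(\Lambda_0/T^a\Lambda_0))$ identifies with the derived category $\Sh(N,\Lambda_0/T^a\Lambda_0)$ since $N$ is a manifold.

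The main obstacle is the compatibility of the almost quotient with the Lurie tensor product. This requires that the almost zero subcategory $\Sigma_{L_0^\bG/T^aL_0^\bG}$ inside $\Mod(L_0^\bG/T^aL_0^\bG)$ tensor-absorbs under $\Sh(N,\bK)\otimes(-)$, or equivalently that the localization functor $\fraka$ commutes with tensoring by $\Sh(N,\bK)$. Flatness of $\Sh(N,\bK)$ over $\bK$ and the description of almost zero objects via $\frakm\otimes(-)$ should make this routine, but this is the one point where care is needed; once it is in place, the corollary follows immediately.
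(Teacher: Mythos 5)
Your core approach is the same as the paper's implicit one: the corollary is obtained by tensoring the preceding lemma's almost embedding $\Mod(L_0^\bG/T^aL_0^\bG)\hookrightarrow_\almost \mu^\bG_{<a}(*)$ with $\Sh(N,\bK)$ and invoking the two Lurie tensor identifications $\Sh(N,\bK)\otimes\mu^\bG_{<a}(*)\cong \mu^\bG_{<a}(T^*N)$ and $\Sh(N,\bK)\otimes\Mod(L_0^\bG/T^aL_0^\bG)\cong \Sh(N,\Mod(L_0^\bG/T^aL_0^\bG))$.

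One small misstep: your paragraph about the essential image and the parametrized Morita functor as a left inverse is unnecessary and slightly off target. The corollary only asserts an almost embedding $\hookleftarrow_\almost$, not an almost equivalence, so you only need to preserve almost fully faithfulness under $\Sh(N,\bK)\otimes(-)$. Indeed, the remark following the lemma explicitly exhibits an object of $\mu^\bG_{<a}(*)$ orthogonal to $1_{\mu,a}$, showing the functor is \emph{not} essentially surjective; the same failure persists after tensoring. Your discussion of whether the Lurie tensor product preserves almost fully faithfulness and whether the almost quotient commutes with $\Sh(N,\bK)\otimes(-)$ is a legitimate technical concern that the paper glosses over with ``We immediately have,'' so it is reasonable to flag it, but the essential-image/left-inverse reasoning should be dropped.
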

\begin{proof}
    For any $\cE, \cF\in\Sh(X,\Mod(L_0^\bG/T^cL_0^\bG))$, we have 
    \begin{equation}
        \Hom(\frakB_c(\cE), \frakB_c(\cF))\cong \Hom(\cE, \frakA_c\circ \frakB_c(\cF)).
    \end{equation}
    The RHS is almost isomorphic to $\Hom(\cE,\cF)$, since $\frakA_c\circ \frakB_c(\cF)\cong_\almost \cF$.
\end{proof}

\subsection{Variant 2: Higher-dimensional version}
For the use of $\hbar$-Riemann--Hilbert correspondence, we would like to mention the following version:

For a proper convex cone $\gamma$ in $\bR^n$, we say that $\gamma$ is simplicial if there exists a linear isomorphism of $\bR^n$ which gives an isomorphism $\gamma\cong \bR^n_{\geq 0}$.

Let $\gamma$ be a simplicial closed polyhedral proper convex cone in $\bR^n$ with nonempty interior. Then $\gamma$ has a semigroup structure with respect to the addition. We denote the corresponding polynomial ring by $\bK[\gamma]$. We denote the indeterminate corresponding to $a\in \gamma$ by $T^a$. Let $|\cdot|$ be the Euclidean norm of $\bR^n$. For $r\in \bR_{>0}$, we denote the ideal of $\bK[\gamma]$ generated by $T^a$'s with $|a|>r$ by $\frakm(r)$. We set
\begin{equation}
    \Lambda_0^{\gamma}:=\lim_{\substack{\longleftarrow \\ r\rightarrow \infty}}\bK[\gamma\cap \bG]/\frakm(r).
\end{equation}

We consider $\bR^n$-equivariant sheaves on $X\times \bR^n$. We denote the subcategory spanned by the object whose microsupport contained in $T^*X\times \bR^n\times (\bR^n\bs\Int(\gamma^{\vee}))$ by $\Sh^{\bR^n}_{\bR^n\bs \Int(\gamma^\vee)}(X\times \bR^n,\bK)$ where $\vee$ is polar dual.
We set
\begin{equation}
   \mu^{\gamma}(T^*X):= \Sh^{\bR^n}_{\Int(\gamma^\vee)}(X\times \bR^n,\bK):=\Sh^{\bR^n}(X\times \bR^n,\bK)/\Sh^{\bR^n}_{(\bR^n\bs \Int(\gamma^\vee))}(X\times \bR^n,\bK).
\end{equation}

By tensoring Theorem~\ref{cor:main} $n$ times, we get the following:
\begin{theorem}
Suppose $\gamma$ is simplicial.
    We have an almost embedding
    \begin{equation}
        \mu^{\gamma}(T^*X)\hookrightarrow_\almost \Sh(X, \Lambda_0^{\gamma}).
    \end{equation}
\end{theorem}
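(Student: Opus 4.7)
The strategy is exactly the one sketched in the statement's preamble: reduce to $\gamma = \bR^n_{\geq 0}$ by simpliciality, and then iterate the one-variable result $n$ times through the Lurie tensor product.

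First, I would fix a linear isomorphism of $\bR^n$ sending $\gamma$ to $\bR^n_{\geq 0}$, so that after this change of coordinates $\gamma^\vee = \bR^n_{\geq 0}$ and the dual cone condition factors as $\tau_i \leq 0$ for each $i = 1, \dots, n$. The $\bR^n$-action on $M \times \bR^n$ is a product action, so Lemma~\ref{lem:Lurietensor} applied iteratively gives
\begin{equation}
    \Sh^{\bR^n}(M \times \bR^n, \bK) \cong \Sh(M,\bK) \otimes \Sh^{\bR}(\bR_{t_1},\bK) \otimes \cdots \otimes \Sh^{\bR}(\bR_{t_n},\bK).
\end{equation}
Because the microsupport cone $-\gamma^\vee$ is the product of the one-dimensional half-lines $\{\tau_i \leq 0\}$, the quotient by negative-microsupport sheaves is compatible with this tensor decomposition. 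Consequently,
\begin{equation}
    \mu^{\gamma}(T^*M) \cong \Sh(M,\bK) \otimes \mu^{\bR}(*)^{\otimes n}.
\end{equation}

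Next I would apply Theorem~\ref{thm:main} to each of the $n$ factors $\mu^{\bR}(*)$, getting an almost equivalence $\mu^{\bR}(*)^{\otimes n} \cong_\almost \Mod_\comp(\Lambda_0)^{\otimes n}$. Combined with Corollary~\ref{cor:main} style reasoning and Lemma~\ref{lem:Lurietensor}(2), this already embeds $\mu^\gamma(T^*M)$ almost fully faithfully inside $\Sh(M, \Mod(\Lambda_0)^{\otimes n})$. The remaining arithmetic step is to identify the $n$-fold Lurie tensor product $\Mod_\comp(\Lambda_0)^{\otimes n}$ with $\Mod(\Lambda_0^\gamma)$. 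Under the identification $\gamma \cong \bR^n_{\geq 0}$, an element of $\Lambda_0^\gamma$ is a Novikov-type sum $\sum_{a \in \bR^n_{\geq 0}} c_a T^a$ in which for each $r > 0$ only finitely many terms with $|a| \leq r$ are nonzero; this is precisely the description of the $n$-fold completed tensor product of $\Lambda_0$ with itself. Thus $\Lambda_0^\gamma$ is the monoidal unit of $\Mod_\comp(\Lambda_0)^{\otimes n}$, and presentability together with its generator status in each factor gives $\Mod_\comp(\Lambda_0)^{\otimes n} \cong \Mod_\comp(\Lambda_0^\gamma)$.

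Finally, composing with the right-adjoint inclusion $\Mod_\comp(\Lambda_0^\gamma) \hookrightarrow \Mod(\Lambda_0^\gamma)$ (which stays fully faithful after tensoring with $\Sh(M,\bK)$) and invoking Lemma~\ref{lem:Lurietensor}(2) to identify $\Sh(M,\bK) \otimes \Mod(\Lambda_0^\gamma)$ with $\Sh(M, \Lambda_0^\gamma)$ yields the asserted almost embedding.

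The genuinely delicate step is the identification $\Mod_\comp(\Lambda_0)^{\otimes n} \cong \Mod_\comp(\Lambda_0^\gamma)$: the Lurie tensor product on presentable categories must be shown to correspond to the derived completed tensor on the module side, and the computation $\Lambda_0 \widehat{\otimes} \cdots \widehat{\otimes} \Lambda_0 \cong \Lambda_0^\gamma$ needs the filtration by ideals $\frakm(r)$ to be compatible with the product filtration coming from the $n$ factors — a fact that ultimately rests on the Euclidean norm on $\bR^n$ being equivalent to the sup-norm, which is exactly what simpliciality provides. Everything else (the product microsupport statement, the iteration of Theorem~\ref{thm:main}, and the passage from $\Mod_\comp$ to $\Mod$) is formal once that identification is in place.
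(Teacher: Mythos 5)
Your proposal is correct and takes essentially the same approach as the paper, which in fact disposes of this theorem in a single sentence (``By tensoring Theorem~\ref{cor:main} $n$ times, we get the following''). Your fleshed-out version — reduce to $\gamma=\bR^n_{\geq 0}$ by simpliciality, factor $\mu^\gamma(T^*M)$ as $\Sh(M,\bK)\otimes\mu^\bR(*)^{\otimes n}$ via the Lurie tensor product, apply Theorem~\ref{thm:main} factor-by-factor, and then identify $\Mod_\comp(\Lambda_0)^{\otimes n}$ with $\Mod_\comp(\Lambda_0^\gamma)$ — is precisely what the paper's one-liner is tacitly invoking, and you are right that the identification of the completed $n$-fold Lurie tensor product with $\Mod_\comp(\Lambda_0^\gamma)$ (resting on the equivalence of the Euclidean and sup norms, which simpliciality supplies) is the genuinely non-formal step that the paper does not spell out either.
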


\begin{remark}
If $n=2$, every $\gamma$ is simplicial. It should be possible to remove the restriction on $\gamma$ and $\bG$. But, so far, we don't know any application of such a generalization.
\end{remark}

\section{Application I: Non-conic microlocal sheaf theory}
By Theorem~\ref{cor:main}, one can imagine that microlocal sheaf theory over Novikov rings are non-conic. In this section, we develop such a theory.

\subsection{Non-conic microsupport}
Let $R$ be a real valuation ring. Our examples are $\Lambda_0^\bG$ for a dense subgroup $\bG\subset \bR$. We denote the valuation by $v$. We set
\begin{equation}
    R_c:=R/\lc r\in R\relmid v(r)>c\rc.
\end{equation}

Let $U$ be an open subset of $X$. 
Let $\phi$ be a continuous function on $U$ which is bounded below. For any connected open subset $V\subset U$, we denote the infimum value of $\phi$ by $\phi_V$. We define a sheaf $R^\phi$ as follows: For any connected open subset $V$, we set $R^\phi(V):=R$. For an open inclusion of connected open subsets $W\subset V$, we have $\phi_W\geq \phi_V$, we set a structure morphism $R^\phi(V)\rightarrow R^\phi(W)$ by $T^{\phi_W-\phi_V}$. Sheafifying this, we get a sheaf on $U$.
We also set $R^\phi_c:=R^\phi\otimes_{R}R_c$ for $c\geq 0$.

\begin{definition}
For $\cE\in \Sh(X,R)$, we set $\musupp(\cE)$ to be the closure of the complement of the following set:
\begin{equation}
    \lc(x, \xi)\in T^*X\relmid \cHom(R^\phi_c,\cE)_x\simeq 0 \text{ for any $c\geq 0$ and $C^1$-function $\phi$ with $d\phi(x)=\xi$}\rc.
    \end{equation}
\end{definition}

The followings are obvious from the definition.
\begin{lemma}
\begin{enumerate}
    \item     $\musupp(\cE)$ is closed.
    \item Let $\cE_1\rightarrow \cE_2\rightarrow \cE_3\xrightarrow{[1]}$ be an exact triangle. Then we have
    \begin{equation}
        \musupp(\cE_2)\subset \musupp(\cE_1)\cup \musupp(\cE_3).
    \end{equation}
\end{enumerate}
\end{lemma}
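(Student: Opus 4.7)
The plan is to argue both parts directly from the definition, which is phrased in terms of the open complement of $\musupp(\cE)$. I would introduce the auxiliary notation
\begin{equation}
    V(\cE):=\lc (x,\xi)\in T^*M\relmid \cHom(R^\phi_c,\cE)_x\simeq 0 \text{ for all $c\geq 0$ and $C^1$-functions $\phi$ with $d\phi(x)=\xi$} \rc,
\end{equation}
so that by definition $\musupp(\cE)=\overline{T^*M\setminus V(\cE)}$. Both claims will then follow by manipulating $V(\cE)$.

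Part (1) is immediate: the closure of any subset of a topological space is closed, so $\musupp(\cE)$ is closed by construction.

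For part (2), the key input is that for any fixed $\phi$ and $c$, the functor $\cHom(R^\phi_c,-)$ is exact (as a derived functor into $\Mod(R_M)$) and taking stalks at $x$ is also exact. Applying these to the exact triangle $\cE_1\rightarrow \cE_2\rightarrow \cE_3\xrightarrow{[1]}$ produces an exact triangle
\begin{equation}
    \cHom(R^\phi_c,\cE_1)_x\rightarrow \cHom(R^\phi_c,\cE_2)_x\rightarrow \cHom(R^\phi_c,\cE_3)_x\xrightarrow{[1]}.
\end{equation}
Hence if the outer two terms vanish at $(x,\xi)$ for every admissible $(\phi,c)$, so does the middle one. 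This gives the inclusion $V(\cE_1)\cap V(\cE_3)\subset V(\cE_2)$, or equivalently $V(\cE_2)^{\comp}\subset V(\cE_1)^{\comp}\cup V(\cE_3)^{\comp}$. Taking closures and using that closure commutes with finite unions yields
\begin{equation}
    \musupp(\cE_2)=\overline{V(\cE_2)^{\comp}}\subset \overline{V(\cE_1)^{\comp}}\cup \overline{V(\cE_3)^{\comp}}=\musupp(\cE_1)\cup \musupp(\cE_3),
\end{equation}
which is the desired inclusion.

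I do not anticipate any obstacle: part (1) is formal and part (2) only needs the exactness of $\cHom(R^\phi_c,-)_x$ applied term-by-term to the given triangle, together with the distributivity of closure over finite unions. This matches the author's assertion that the statement is obvious from the definition.
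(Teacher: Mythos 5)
Your argument is correct and is exactly the argument the paper has in mind: the paper dismisses both claims as "obvious from the definition," and what you wrote out is the natural unwinding of that claim. Part (1) is closure of a set; part (2) uses that $\cHom(R^\phi_c,-)$ and stalks are triangulated functors, so vanishing of the outer two terms forces vanishing of the middle, giving $V(\cE_1)\cap V(\cE_3)\subset V(\cE_2)$, from which the inclusion follows by taking complements and using that closure distributes over finite unions.
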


Although we do not address here, generalizing functoriality results of microsupport in \cite{KS} to our setup should be an interesting problem.

\subsection{Relation to usual microsupport}
Since an object of $\Sh(X,R)$ is a sheaf on a manifold, we can also define the usual microsupport of \cite{KS}.
\begin{definition}
    For a subset $A\subset T^*X$, we set
    \begin{equation}
        \bR_{\geq 0}\cdot A:=\lc (x, \xi)\in T^*X\relmid (x, \xi')\in A, c\in \bR_{\geq 0} \text{ s.t.} (x, \xi)= (x, c\xi')\rc
    \end{equation}
\end{definition}
Note that the usual microsupport does not have much information for our sheaves, as the following proposition suggests. 
\begin{proposition}
For $\cE\in \Sh(X,R)$, we have $\bR_{>0}\cdot \musupp(\cE)\subset \SS(\cE)$.
\end{proposition}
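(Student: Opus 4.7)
By the contrapositive and conicity of $\SS(\cE)$, it suffices to show that every $(y,\eta) \in T^*M \setminus \SS(\cE)$ satisfies $(y,\eta) \notin \musupp(\cE)$: once this is established, applying the same argument along the ray $\bR_{>0}\cdot(y,\eta)$ (which is also disjoint from $\SS(\cE)$) shows the ray is disjoint from $\musupp(\cE)$ too, giving the inclusion $\bR_{>0}\cdot\musupp(\cE) \subset \SS(\cE)$. Concretely, I need to exhibit an open neighborhood $W$ of $(y,\eta)$ in $T^*M$ such that for every $(x,\xi) \in W$, every $c \geq 0$, and every $C^1$-function $\phi$ with $d\phi(x)=\xi$, the complex $\cHom(R^\phi_c,\cE)_x$ vanishes.

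To build $W$, I would fix an open conic $\Omega \subset T^*M \setminus \SS(\cE)$ with $(y,\eta)\in\Omega$, which exists since $\SS(\cE)$ is closed and conic. The standard Kashiwara--Schapira characterization of microsupport \cite{KS} then gives $(R\Gamma_{\{\phi \geq \phi(x)\}}\cE)_x \simeq 0$ for every $(x,\xi) \in \Omega$ and every admissible $\phi$. The substantive step is to relate $\cHom(R^\phi_c,\cE)_x$ to these Kashiwara--Schapira stalks. My approach is to filter $R^\phi_c$ by the Novikov valuation $v$ on $R$: the subsheaf $R^\phi_{\geq s}$ of sections of valuation at least $s$ is identified, locally near $x$ and as a sheaf of $\bK$-modules, with the constant sheaf on the open sublevel set $\{\phi<\phi(x)+s\}$, so that the associated graded of the truncation $R^\phi_c$ over $s\in[0,c]$ consists of sheaves of the form $\bK_{\{\phi \leq \phi(x)+s\}}$. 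Taking $\cHom(-,\cE)$ and stalks at $x$ then presents $\cHom(R^\phi_c,\cE)_x$ as an iterated extension whose graded pieces are $(R\Gamma_{\{\phi \geq \phi(x)+s\}}\cE)_x$, each of which vanishes by the Kashiwara--Schapira input above.

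The main obstacle is making the valuation filtration rigorous sheaf-theoretically and checking that derived $\cHom$ commutes with the relevant limits/colimits over $s$ and over shrinking neighborhoods $V\ni x$. A cleaner alternative route is to go through the Tamarkin/Novikov equivalence of Corollary~\ref{cor:main}: on the equivariant side, $R^\phi$ corresponds to $\bigoplus_{c\in\bG}\bK_{\{t\geq \phi+c\}}$, whose microsupport in $T^*(M\times\bR_t)$ is contained in the Lagrangian $\{(x,-\tau\,d\phi(x),\phi(x),\tau):\tau\geq 0\}$, so that the $T^*M$-projection is precisely $\bR_{>0}\cdot\{(x,d\phi(x))\}$. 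The standard conic microsupport estimate $\SS(\cHom(\cF,\cG))\subset -\SS(\cF)+\SS(\cG)$ together with the identification of $\SS(\cE)$ with the $T^*M$-projection of the Tamarkin-side microsupport then produces the desired vanishing, and hence the proposition, without the explicit filtration.
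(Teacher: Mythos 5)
The paper does not actually provide a proof of this proposition: in the subsection ``Relation to usual microsupport'' the statement appears with no accompanying proof environment (the preceding lemma is dismissed as ``obvious from the definition,'' and this proposition is simply asserted). So there is nothing in the paper to compare your argument against; I assess it on its own terms.

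Your opening reduction is fine: $\SS(\cE)$ is closed and conic, $\musupp(\cE)$ is by definition a closure, so it suffices to show the open conic set $T^*M\setminus\SS(\cE)$ lies inside the set $S$ of points where all the test stalks $\cHom(R^\phi_c,\cE)_x$ vanish. The gaps are in the step that analyzes $\cHom(R^\phi_c,\cE)_x$.

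\textbf{Sign error.} The paper's next proposition (which does have a proof) identifies $R^\phi_c$ as the almost image under $\frakA$ of $\bigoplus_c\bK_{\{-\phi+c\le t<-\phi+c+c'\}}$ — note the $-\phi$. Correspondingly, $\SS(\bK_{\{t\ge-\phi\}})$ over $\{\tau>0\}$ lies along $\xi=\tau\,d\phi(x)$, which under $\rho$ gives $\musupp=\{(x,+d\phi(x))\}$, matching the Kashiwara--Schapira test set $\{\phi\ge\phi(x)\}$. Your formula $\bigoplus_c\bK_{\{t\ge\phi+c\}}$ has $\SS$ along $\xi=-\tau\,d\phi(x)$, projecting to $\bR_{>0}\cdot\{-d\phi(x)\}$, so you would be controlling the ray through $-\xi$, not $\xi$. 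This sign also infects the ``valuation filtration'' claim: germs of $R^\phi$ at $x$ of valuation $\ge s$ extend over $\{\phi>\phi(x)-s\}$, not $\{\phi<\phi(x)+s\}$.

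\textbf{The filtration does not close up.} Even with the sign fixed, the corrected ``pieces'' live on the open sets $\{\phi>\phi(x)-s\}$, which for $s>0$ are neighborhoods of $x$; $\cHom(\bK_U,\cE)_x\simeq\cE_x$ for such $U$, which has no reason to vanish. Conversely, the pieces you actually write down, $(R\Gamma_{\{\phi\ge\phi(x)+s\}}\cE)_x$, vanish \emph{trivially} for $s>0$ because $\{\phi\ge\phi(x)+s\}$ does not contain $x$ — so if those were the graded pieces, the proposition would be vacuous. Neither behavior is correct, which signals that $\cHom(R^\phi_c,\cE)_x$ is not an iterated extension of any such graded pieces. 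Note also that the stalk $(R^\phi)_x$ at a noncritical point of $\phi$ is $\frakm$, not $R$, so the ``constant sheaf on a sublevel set'' picture is too coarse. The genuine content is exactly the commutation of $\cHom$ with the completion/limit over $s$, which you flag as ``the main obstacle'' but do not resolve.

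\textbf{The Tamarkin alternative is circular.} An arbitrary $\cE\in\Mod(R_M)$ need not lie in the essential image of $\frakA$, so the only thing the equivariant picture can supply here is $\SS(R^\phi_c)$ as a $\bK$-module sheaf — and asserting that projection is $\bR_{>0}\cdot\{(x,d\phi(x))\}$ is essentially the content of the proposition for the test object. It still needs a proof (with the sign corrected), and you would then still have to argue that the $R$-linear $\cHom$ stalk is controlled by the $\bK$-linear microsupport estimate without any properness/compact-support hypothesis, which the standard bound $\SS(\cHom(\cF,\cG))\subset\SS(\cF)^a+\SS(\cG)$ does not give for free.

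In short: the strategy is reasonable and the paper offers no competing proof, but the crucial step — vanishing of $\cHom(R^\phi_c,\cE)_x$ from $(x,d\phi(x))\notin\SS(\cE)$ — is not established, the sign of $\phi$ is systematically wrong, and the proposed filtration yields pieces that are either trivially zero or trivially nonzero, showing that the decomposition does not reduce the problem to the Kashiwara--Schapira criterion.
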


\subsection{Relation to non-conic microsupport}
\begin{proposition}
    For an object $\cE\in \mu^\bG(T^*X)$, we have
    \begin{equation}
        \musupp(\cE)=\musupp(\frakA(\cE)).
    \end{equation}
\end{proposition}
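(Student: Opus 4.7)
The plan is to compare the two non-conic microsupports via explicit ``slab'' test objects that correspond under $\frakA$, and then to transport the vanishing condition using the almost full faithfulness of Corollary~\ref{cor:main}. For a $C^1$-function $\phi : U \to \bR$ on an open $U \subset M$ and a parameter $c \geq 0$, I would set
\begin{equation}
\cT_c^\phi := \bigoplus_{g \in \bG} \bK_{\{(x,t) \in U \times \bR_t \, : \, \phi(x) + g \leq t < \phi(x) + g + c\}} \in \mu^\bG(T^*U),
\end{equation}
(with its natural $\bG$-equivariant structure) and compute $\frakA(\cT_c^\phi)$ by a direct stalkwise argument modeled on the proofs of Lemmas~\ref{lem:almostisom1} and \ref{lem:almostisom2}. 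The claim is that $\frakA(\cT_c^\phi) \cong_\almost R_c^\phi$ in $\Sh(U, L_0^\bG)$: sectionwise both sides give (almost) $L_0^\bG/T^c L_0^\bG$ by the slab analog of Lemma~\ref{lem:almostisom2}, and the restriction maps match because shrinking $V$ to $W \subset V$ forces the slab over $W$ to shorten by exactly $\phi_W - \phi_V$ in the $t$-direction, which reproduces the multiplication-by-$T^{\phi_W-\phi_V}$ structure map of $R_c^\phi$.

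Next I would re-express $\musupp(\cE)$ on the Tamarkin side in terms of these slabs. Using the projection $\rho$ and the standard microlocal cut-off lemma applied in the half-space $\{\tau > 0\}$, a point $(x_0, \xi_0) \in T^*M$ lies in the complement of $\musupp(\cE)$ if and only if for every $C^1$-function $\phi$ with $d\phi(x_0) = \xi_0$ and every $c \geq 0$,
\begin{equation}
\cHom_{\mu^\bG(T^*M)}(\cT_c^\phi, \cE)_{x_0} \simeq 0
\end{equation}
on some neighborhood of $x_0$; this is the slab version of the familiar Tamarkin doubling criterion. By Corollary~\ref{cor:main} together with the identification in the first step, the left-hand side is almost isomorphic to $\cHom_{\Sh(M, L_0^\bG)}(R_c^\phi, \frakA(\cE))_{x_0}$, so the condition matches the defining condition of $\musupp(\frakA(\cE))$. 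Taking closures on both sides yields the desired equality.

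The main obstacle is the first step: proving $\frakA(\cT_c^\phi) \cong_\almost R_c^\phi$ as sheaves of $L_0^\bG$-modules on $U$. The underlying values on each connected open follow from a mild modification of Lemma~\ref{lem:almostisom2}, but one has to verify that the restriction maps acquire precisely the $T^{\phi_W-\phi_V}$ factor, which requires a careful analysis of the Hom map induced by the open inclusion shrinking the $t$-slab. A secondary issue is to confirm that $\musupp$ is insensitive to almost isomorphisms in the test argument, so that an almost identification of test objects is enough; this should follow because a stalk of $\cHom$ vanishes iff it vanishes after tensoring with $\frakm$, which is the defining property of almost zero $L_0^\bG$-modules. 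The second step is a mild extension of the classical Tamarkin-type characterization and should be routine once the slab objects are in place.
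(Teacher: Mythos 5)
Your approach is essentially the same as the paper's very terse proof: construct a $\bG$-equivariant slab sheaf that corresponds to the test object $R_c^\phi$ under $\frakA$, then transfer the vanishing criterion. You also helpfully flag the real technical content (that the restriction maps of $\frakA$ applied to the slab reproduce the $T^{\phi_W-\phi_V}$ structure maps of $R^\phi_c$), which the paper simply asserts.

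However, there is a sign error in your test object. The paper's slab is $\bigoplus_{c\in\bR}\bK_{\{-\phi(x)+c\le t<-\phi(x)+c+c'\}}$, not $\bigoplus_{g}\bK_{\{\phi(x)+g\le t<\phi(x)+g+c\}}$. The sign matters: for a $C^1$-function $\phi$, the sheaf $\bK_{\{t\ge-\phi(x)\}}$ has $\SS$ lying over $\{(x,\lambda d\phi(x),\,t=-\phi(x),\,\tau=\lambda):\lambda\ge 0\}$, so after applying $\rho$ its Tamarkin non-conic microsupport is $\{(x,d\phi(x))\}$. The slab $\{-\phi+c\le t<-\phi+c+c'\}$ therefore probes the covector $d\phi(x_0)=\xi_0$, matching the definition of $\musupp$ on the $\Lambda_0$-side, which tests with functions satisfying $d\phi(x_0)=\xi_0$. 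Your slab $\{\phi+g\le t<\phi+g+c\}$ has $\musupp=\{(x,-d\phi(x))\}$ and thus probes $-\xi_0$ instead; as written, your argument would actually identify $\musupp(\frakA(\cE))$ with the antipode of $\musupp(\cE)$. Replacing $\phi$ by $-\phi$ throughout your Step 1 (and correspondingly flipping the sign when checking the restriction-map factor, since $\sup_V(-\phi)=-\phi_V$) fixes the issue and brings your argument into line with the paper.
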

\begin{proof}
Let $(x,\xi)$ be a point in $T^*X$. Consider any $C^1$-function with $\phi$ with $d\phi(x)=\xi$. For any $c'>0$, the equivariant sheaf $\bigoplus_{c\in \bR}\bK_{-\phi+c+c'>t\geq -\phi+c}$ is almost sent to $R_{c'}^\phi$ on sufficiently small open subset around $x$ under $\frakA$. Hence the test sheaves to estimate the both sides of the desired equality coincide. This completes the proof.
\end{proof}

\section{Application II: Curved sheaves}
In Fukaya category theory, we have to deal with curved complexes, bounding cochains, and bulk deformations. In sheaf theory (in the setup of Tamarkin category), introducing such notions is not easy (although we can do them as partly explained in \cite{IK}).
Our interpretation of the category $\Sh^\bR_{\tau>0}(X\times \bR_t)$ as the sheaf category of $\Lambda_0$-modules allows us to introduce such deformations easily.

\subsection{Curved complex and sheaves}
We set $R:=\Lambda_0$. Let $V$ be a $\bZ$-graded $R$-module and $d$ be a $\deg=1$-endomorphism of $V$. We call such a pair $\bV:=(V, d)$, a curved $R$-module complex. We sometimes use the notation $d_\bV:=d$. We call $d^2$ is the curvature of $\bV$. 

A morphism between a curved complex is a graded $R$-linear morphism between underlying $\bZ$-graded $R$-modules. We denote the category of curved $R$-module complexes by $CCh^c(R)$.

Let $\bV_i:=(V_i, d_i)$ ($i=1,2$) be curved complexes. The tensor product is defined by the graded tensor product $V_1\otimes V_2$ equipped with the differential $d_{\bV_1}\otimes 1+1\otimes d_{\bV_2}$. This defines a monoidal category structure.
\begin{definition}
    We say a category enriched over $CCh^c(R)$ is an $R$-linear curved dg-category.
\end{definition}

Let $(V_i, d_i)$ be curved complexes. The space of linear maps $\Hom(V_1, V_2)$ is again a curved complex $\bH om(V_1, V_2)$ where its differential is defined by
\begin{equation}
    f\mapsto d_{V_2}\circ f-f\circ d_{V_1}.
\end{equation}
Hence $CCh^c(R)$ is itself a curved dg-category.

\begin{definition}[Flat part]
    Let $\bV$ be a curved complex. Note that $\bV_0:=(\ker(d^2), d)$ is a usual complex. We say $\bV_0$ is the flat part of $\bV$.
\end{definition}

Let $(V_i, d_i)$ ($i=1,2$) be curved complexes. Considering $\bH om(V_1, V_2)_0$, we obtain the dg-category $CCh(R)$ of curved complexes. This category contains the dg-category of chain complexes of $R$-modules $Ch(R)$ as a subcategory.

\begin{definition}
Let us consider a sequence of curved complexes:
\begin{equation}
    \bV_1\rightarrow \bV_2\rightarrow \cdots \bV_n.
\end{equation}
Suppose that this sequence is an exact sequence for each graded part. Such a sequence is called an acyclic complex.
\end{definition}

Now we construct the so called totalization complex. For the definition, we refer to \cite{Positselski}.

\begin{remark}
In \cite{Positselski}, there are three kinds of derived categories. We choose the one caled ``absolute" one. Since the objects we are interested in are finite in some sense, we believe that this choice is not essential for our purpose.
\end{remark}

We denote the full subcategory spanned by the totalizations of the acyclic complexes by $Acycl$, We set the Drinfeld quotient by 
\begin{equation}
    \mathrm{CCh}(R):=CCh(R)/Acycl.
\end{equation}

Similarly, we consider the category of curved sheaves as a global version of the above story. Let $X$ be a manifold. A curved sheaf $\cE$ is a $\bZ$-graded sheaf with a degree 1 endomorphism. We consider the category defined by
\begin{enumerate}
    \item the objects is the curved sheaves
    \item a morphism is a graded morphism between graded sheaves underlying curved sheaves.
\end{enumerate}
We denote this category by $CSh^c(R_X)$, which is a curved dg-category. By replacing the hom-spaces by flat parts, we obtain a dg-category $CSh(R_X)$. We can similarly define the subcategory of the totalization of the acyclic complexes $Acycl$. Then we define
\begin{equation}
    \mathrm{CSh}(X, R):=CSh(R_X)/Acycl.
\end{equation}
This is the derived dg category of curved sheaves. 

\subsection{Twisted sheaves}
Inside $\mathrm{CSh}(X, R)$, there exists a well-behaved subclass of objects: weakly unobstructed sheaves. For any object $\cE\in \CSh(X,R)$, we have $\cE\otimes R_X\cong \cE$ under derived tensor product. This implies that we have a morphism
\begin{equation}
    w\colon \cE\rightarrow \cE[2]
\end{equation}
associated to each $w\in H^2(X,R)$.
\begin{definition}
Take $w \in H^2(X,R)$. We denote the full subcategory of $\CSh(X,R)$ consisting of the objects whose curvature is cohomologically $w$ by $\Sh(X, R, w)$.
\end{definition}
We first note that we have the following isomorphism:
\begin{equation}
   e^{(-)} \colon \frakm\xleftrightarrow{\cong} 
 1+\frakm \colon \log.
\end{equation}

Recall that $\frakm$ is the maximal ideal of $R$.
Take a cohomology $e^w\in H^2(X, 1+\frakm)$. Fix a Cech 2-cocycle $e^{c_{ijk}}\in 1+\frakm$. We consider the category consisting of objects as follows:
\begin{enumerate}
    \item For each $U_i$, we have an object $\cE_i$ of $\Sh(U_i, R)$,
    \item On the restriction to $U_i\cap U_j$, we have  a specified isomorphism $\cE_i\cong \cE_j$ in $\Sh(U_i\cap U_j, R)$.
    \item On the restriction to $U_i\cap U_j\cap U_k$, the associated automorphism of $\cE_i$ is $e^{c_{ijk}}$.
\end{enumerate}
The resulting category does not depend on the choice of Cech representative of an element $e^w\in H^2(X,1+\frakm)$. We denote the resulting category by $\Sh_{tw}(X, R, e^w)$.

We now deduce that the above two categories are just two presentations of the same category.
\begin{theorem}
For $w\in H^2(X, \frakm)$,
we have $\Sh_{tw}(X, R, e^w)\simeq \Sh(X, R, w)$.
\end{theorem}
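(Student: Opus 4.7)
The plan is to construct quasi-inverse functors using local trivializations of the curvature on a sufficiently fine open cover $\lc U_i \rc$ of $X$ on which the class $w\in H^2(X,\frakm)$ is represented by the Cech 2-cocycle $c_{ijk}\in\frakm$, so that $e^{c_{ijk}}$ is the corresponding Cech cocycle in $1+\frakm$ under the exponential bijection. The guiding principle is that on contractible $U_i$ the restriction of $w$ vanishes, so a curvature representing $w$ can be locally gauged away, and the ambiguity in gauging is exactly the twisted sheaf data.

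First, I would define a functor $F\colon \Sh(X,R,w)\to \Sh_{tw}(X,R,e^w)$ as follows. For $(\cE,d)\in \Sh(X,R,w)$, on each $U_i$ pick a degree-one endomorphism $\beta_i\in \End^1(\cE|_{U_i})$ whose commutator with $d|_{U_i}$ realizes the trivialization $d^2|_{U_i}$, so that the modified differential $d_i:=d|_{U_i}+\beta_i$ squares to zero; this produces an object $\cE_i:=(\cE|_{U_i},d_i)\in \Sh(U_i,R)$. On double overlaps $U_i\cap U_j$, the difference $\beta_i-\beta_j$ is $d$-closed of degree one, and the identity of $\cE$ is intertwined with the flat structures by an isomorphism of the form $g_{ij}=\exp(b_{ij})$ for some $b_{ij}\in \frakm\cdot\End^0(\cE|_{U_{ij}})$. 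The composite $g_{ki}\circ g_{jk}\circ g_{ij}$ on triple overlaps differs from the identity by precisely $e^{c_{ijk}}$ because the Cech coboundary of $b_{ij}$ equals $c_{ijk}$, by construction of $c_{ijk}$ as a Cech representative of the curvature class.

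In the opposite direction, given twisted data $(\cE_i, g_{ij})$ with $g_{jk}g_{ij}=e^{c_{ijk}}g_{ik}$, I would define $G\colon \Sh_{tw}(X,R,e^w)\to \Sh(X,R,w)$ by a Cech-style totalization: glue the underlying graded sheaves via the $g_{ij}$, and build the differential from the local flat differentials on the $\cE_i$ together with the logarithms $b_{ij}=\log g_{ij}\in \frakm$ (well-defined via the bijection $e^{(-)}\colon \frakm\leftrightarrow 1+\frakm$). The square of the resulting global differential picks up exactly the Cech coboundary of the $b_{ij}$, which is $c_{ijk}$, so the curvature represents $w$; acyclic contributions arising from different choices of cover refinements are killed upon passing to the Drinfeld quotient by $\mathrm{Acycl}$.

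The main obstacle is independence of all the choices (cover, local trivializations $\beta_i$, primitives $b_{ij}$): different choices of $\beta_i$ on the curved side change $b_{ij}$ by Cech coboundaries of degree-zero endomorphisms in $\frakm$, which corresponds on the twisted side to changing the gluing data by coboundaries in $1+\frakm$, hence yields isomorphic twisted sheaves. Dually, refining the cover on the twisted side gives isomorphic objects after totalization. The key verification is that the exponential map intertwines the $\frakm$-valued Cech differential (producing curvatures) with the multiplicative $(1+\frakm)$-valued Cech differential (producing gluing cocycles), so the quasi-isomorphism $F\circ G\simeq \id$ and $G\circ F\simeq \id$ reduces to a purely cochain-level identity once the gauge ambiguity is cleanly parametrized. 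Once this is done, fully faithfulness on morphism complexes is automatic because $F$ and $G$ are inverse constructions on local charts and the morphism spaces on both sides are computed by the same Cech double complex.
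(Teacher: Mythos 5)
Your approach is essentially the same as the paper's: fix a good cover, locally trivialize the curvature on each $U_i$, glue via isomorphisms on overlaps, and read off the Cech cocycle $e^{c_{ijk}}\in 1+\frakm$ on triple overlaps. The main difference is \emph{how} you trivialize locally. The paper (over $\bK=\bC$) twists by a primitive $1$-form $\alpha_i$ of $w|_{U_i}$ in a de~Rham model, where the identity $\alpha_i\wedge\alpha_i=0$ makes the local trivialization a linear problem, and the transition function $e^{f_{ij}}$ comes directly from Cech--de~Rham. You instead pick an abstract degree-one endomorphism $\beta_i\in\End^1(\cE|_{U_i})$ with $d_i:=d+\beta_i$ flat; but $(d+\beta_i)^2 = d^2 + [d,\beta_i] + \beta_i^2$, so the equation you need to solve is a Maurer--Cartan equation, not the linear equation $[d,\beta_i]=-d^2$ your phrasing suggests. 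You neither argue that the quadratic term vanishes (as it does for $1$-form twists) nor that the MC equation is solvable (say by $\frakm$-adic iteration, using that $w\in H^2(X,\frakm)$), so the existence of the $\cE_i$ is a genuine gap as written. On the other hand, you are more explicit than the paper about the inverse functor $G$, independence of choices, and fully faithfulness via matching Cech double complexes, which the paper leaves implicit; once the local trivialization step is repaired (or replaced by the de~Rham/Cech-resolution argument the paper uses), your account is a reasonable, somewhat more detailed, version of the same proof.
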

\begin{proof}
For the sake of simplicity, we assume $\bK=\bC$. Then we can use the de Rham model for the cohomology e.g. $H^2(X,\frakm)\subset \prod_{c\in \bR_{>0}} H^2(X, \bC)\cong \prod_{c\in \bR_{>0}} H^2_{dR}(X, \bC)$.  

Take a good cover $\{U_i\}$. Over each $U_i$, the restriction $w|_{U_i}$ has a primitive 1-form $\alpha_i$. 

Take an object $\cE\in \Sh(X, R, w)$. Twisting by $\alpha_i$ gives an equivalence of $\cE|_{U_i}\in \Sh(U_i, R, w|_{U_i})\cong \Sh(U_i, R)\ni \cE_i$. On the overlap $U_i\cap U_j$, we have an isomorphism given by 
\begin{equation}
    \cE_i\xrightarrow{\times e^{f_{ij}}} \cE_j
\end{equation}
where $f_{ij}\in C^\infty(U_{i}\cap U_j, \Lambda_+)$ is a primitive of $\alpha_i-\alpha_j$. As in the usual Cech--de Rham isomorphism, the composition $e^{f_{ij}}e^{f_{jk}}e^{f_{ki}}$ is a constant and given by $e^{c_{ijk}}$ where $c_{ijk}$ is a Cech representative of $w$. This completes the proof.  

For general coefficients, the same proof works by replacing the de Rham resolution with Cech resolution.
\end{proof}

\begin{example}[Curved connection]
There exists a category closely related to the above idea. 
Note that, over the field $\bC$, the isomorphism $e^{(-)}$ is extended to 
\begin{equation}
    e^{(-)}\colon \Lambda_0\rightarrow \bC^*+\frakm
\end{equation}
where the RHS is the units of $\Lambda_0$. 

Let $\cE$ be a $C^\infty$-module with a flat connection. If $\cE$ is associated to a vector bundle, it is well-known that the flat sections form a locally constant sheaf and the assignment gives an equivalence.

Similarly, if $\cE$ is $C^\infty$-module with a connection whose curvature is $w\in \Omega^2(X,R)$, they form a dg-category. Then, the above theorem tells us that the category of vector bundles with connections such that the curvature $w$ can be embedded into the category of twisted sheaves.

This construction should be related to the B-field deformation/bulk deformation of Fukaya category.
\end{example}

\subsection{Twisted sheaf quantization and bounding cochain}
In this section, we explain how we can run the theory in \cite{IK} in the twisted setup.
For the details, we refer to \cite{IK}.

Note that any object in $\Sh(X,R,w)$ can be locally viewed as an object of $\Sh(X,R)$, one can define $\musupp$. Similarly, we say an object $\cE$ in $\Sh(X,R,w)$ is a sheaf quantization if it is locally a sheaf quantization viewed as an object of $\Sh(X,R)$.

Also, the low-energy standard sheaf quantization construction has local nature, we get a low-energy sheaf quantization in $\Sh(X,R,w)$ for any Lagrangian brane. Then, in the exactly the same way, one can construct a curved dga associated to a Lagrangian brane. An existence of a Maurer--Cartan element implies an existence of sheaf quantization in $\Sh(X,R,w)$.

\begin{remark}
    It is even possible to formulate curved sheaf quantizations: namely, instead of considering curved twisted complex of hom-spaces as in \cite{IK}, one can directly construct a curved twisted complex of sheaves in $\CSh(X, R)$. Then one can formulate the Maurer--Cartan equation in \cite{IK} as a Maurer--Cartan equation of the curved sheaf itself. The resulting theory is obviously the same one obtained in \cite{IK}.
\end{remark}

\footnotesize
\bibliographystyle{alpha}
\bibliography{bibs.bib}

@book {GaitsgoryRosenblyum,
    AUTHOR = {Gaitsgory, Dennis and Rozenblyum, Nick},
     TITLE = {A study in derived algebraic geometry. {V}ol. {I}.
              {C}orrespondences and duality},
    SERIES = {Mathematical Surveys and Monographs},
    VOLUME = {221},
 PUBLISHER = {American Mathematical Society, Providence, RI},
      YEAR = {2017},
     PAGES = {xl+533pp},
      ISBN = {978-1-4704-3569-1},
   MRCLASS = {14F05 (18D05 18G55)},
  MRNUMBER = {3701352},
MRREVIEWER = {Adrian\ Langer},
       DOI = {10.1090/surv/221.1},
       URL = {https://doi.org/10.1090/surv/221.1},
}

@book {HTT,
    AUTHOR = {Lurie, Jacob},
     TITLE = {Higher topos theory},
    SERIES = {Annals of Mathematics Studies},
    VOLUME = {170},
 PUBLISHER = {Princeton University Press, Princeton, NJ},
      YEAR = {2009},
     PAGES = {xviii+925},
      ISBN = {978-0-691-14049-0; 0-691-14049-9},
   MRCLASS = {18-02 (18B25 18E35 18G30 18G55 55U40)},
  MRNUMBER = {2522659},
MRREVIEWER = {Mark\ Hovey},
       DOI = {10.1515/9781400830558},
       URL = {https://doi.org/10.1515/9781400830558},
}

@unpublished{KudomiKuwagaki,
        note={In preparation},
	year = 2026,
	author = {Kudomi, Kazuki and Kuwagaki, Tatsuki},
	title = {Irregular characteristic cycles}
}

@misc{ScholzeSixfunctor,
      title={Six-Functor Formalisms}, 
      author={Peter Scholze},
      note={arXiv:2510.26269},
      archivePrefix={arXiv},
      primaryClass={math.AG},
      url={https://arxiv.org/abs/2510.26269}, 
}

@unpublished{Vaintrob,
        note={Preprint, available at the author's webpage},
	year = 2017,
	author = {Dmitry Vaintrob},
	title = {Coherent-constructible correspondences and
log-perfectoid mirror symmetry for the torus}
}

@article {GPS,
    AUTHOR = {Ganatra, Sheel and Pardon, John and Shende, Vivek},
     TITLE = {Microlocal {M}orse theory of wrapped {F}ukaya categories},
   JOURNAL = {Ann. of Math. (2)},
  FJOURNAL = {Annals of Mathematics. Second Series},
    VOLUME = {199},
      YEAR = {2024},
    NUMBER = {3},
     PAGES = {943--1042},
      ISSN = {0003-486X,1939-8980},
   MRCLASS = {53D37 (14J33 53D40)},
  MRNUMBER = {4740209},
MRREVIEWER = {Christopher\ T.\ Woodward},
       DOI = {10.4007/annals.2024.199.3.1},
       URL = {https://doi.org/10.4007/annals.2024.199.3.1},
}

@article {positselski,
    AUTHOR = {Positselski, Leonid},
     TITLE = {Two kinds of derived categories, {K}oszul duality, and
              comodule-contramodule correspondence},
   JOURNAL = {Mem. Amer. Math. Soc.},
  FJOURNAL = {Memoirs of the American Mathematical Society},
    VOLUME = {212},
      YEAR = {2011},
    NUMBER = {996},
     PAGES = {vi+133},
      ISSN = {0065-9266,1947-6221},
      ISBN = {978-0-8218-5296-5},
   MRCLASS = {16E35 (16E45 16S37 16T15 18E30)},
  MRNUMBER = {2830562},
MRREVIEWER = {Ji-Wei\ He},
       DOI = {10.1090/S0065-9266-2010-00631-8},
       URL = {https://doi.org/10.1090/S0065-9266-2010-00631-8},
}

@article {IrregPerv,
    AUTHOR = {Kuwagaki, Tatsuki},
     TITLE = {Irregular perverse sheaves},
   JOURNAL = {Compos. Math.},
  FJOURNAL = {Compositio Mathematica},
    VOLUME = {157},
      YEAR = {2021},
    NUMBER = {3},
     PAGES = {573--624},
      ISSN = {0010-437X,1570-5846},
   MRCLASS = {14F10 (14F08)},
  MRNUMBER = {4236195},
MRREVIEWER = {Alberto\ Casta\~{n}o Dom\'{\i}nguez},
       DOI = {10.1112/s0010437x20007678},
       URL = {https://doi.org/10.1112/s0010437x20007678},
}

@misc{fukaya2021gromovhausdorff,
      title={Gromov-Hausdorff distance between filtered ${A}_{\infty}$ categories 1: Lagrangian {F}loer theory}, 
      author={Kenji Fukaya},
      note={arXiv:2106.06378},
      archivePrefix={arXiv},
      primaryClass={math.SG}
}

@book {Sato,
    AUTHOR = {Sato, Mikio and Kimura, Tatsuo},
     TITLE = {Mikio Sato's mathematics (in Japanese)},
    SERIES = {},
 PUBLISHER = {NIPPON HYORON SHA},
      YEAR = {2014},
     PAGES = {},
       URL = {https://www.nippyo.co.jp/shop/book/6628.html},
}

@misc{IK,
      title={Microlocal categories over Novikov rings}, 
      author={Yuichi Ike and Tatsuki Kuwagaki},
      note={arXiv: 2307.01561},
      archivePrefix={arXiv},
      primaryClass={math.SG},
      url={https://arxiv.org/abs/2307.01561}, 
}

@misc{Kedlaya,
author={Kiran S Kedlaya},
title={Notes on prismatic cohomology},
note={available at the author's webpage},
}

@misc{Stacks,
author={The Stacks Project Authors},
title={Stacks Project}
}

@misc{KPS,
author={Kuwagaki, Tatsuki and Petr, Adrian and Shende, Vivek},
title={On Fukaya categories and prequantization bundles}
}

@misc{DAGXII,
author={Jacob Lurie},
title={Derived Algebraic Geometry XII: Proper Morphisms, Completions,
and the Grothendieck Existence Theorem}
}

@book {GabberRamero,
    AUTHOR = {Gabber, Ofer and Ramero, Lorenzo},
     TITLE = {Almost ring theory},
    SERIES = {Lecture Notes in Mathematics},
    VOLUME = {1800},
 PUBLISHER = {Springer-Verlag, Berlin},
      YEAR = {2003},
     PAGES = {vi+307},
      ISBN = {3-540-40594-1},
   MRCLASS = {13D10 (13B40 13D03 14G22 18D10)},
  MRNUMBER = {2004652},
       DOI = {10.1007/b10047},
       URL = {https://doi.org/10.1007/b10047},
}

@article {WKBkuw,
    AUTHOR = {Kuwagaki, Tatsuki},
     TITLE = {Sheaf quantization from exact {WKB} analysis},
   JOURNAL = {J. Reine Angew. Math.},
  FJOURNAL = {Journal f\"{u}r die Reine und Angewandte Mathematik. [Crelle's
              Journal]},
    VOLUME = {814},
      YEAR = {2024},
     PAGES = {165--203},
      ISSN = {0075-4102,1435-5345},
   MRCLASS = {53D50},
  MRNUMBER = {4793343},
MRREVIEWER = {Andrea\ D'Agnolo},
       DOI = {10.1515/crelle-2024-0041},
       URL = {https://doi.org/10.1515/crelle-2024-0041},
}

@misc{hRH,
      title={$\hbar$-{R}iemann-{H}ilbert correspondence}, 
      author={Tatsuki Kuwagaki},
      note={arXiv:2202.04400},
      archivePrefix={arXiv},
      primaryClass={math.SG}
}

@article {DK,
    AUTHOR = {D'Agnolo, Andrea and Kashiwara, Masaki},
     TITLE = {Riemann-{H}ilbert correspondence for holonomic {D}-modules},
   JOURNAL = {Publ. Math. Inst. Hautes \'{E}tudes Sci.},
  FJOURNAL = {Publications Math\'{e}matiques. Institut de Hautes \'{E}tudes
              Scientifiques},
    VOLUME = {123},
      YEAR = {2016},
     PAGES = {69--197},
      ISSN = {0073-8301},
   MRCLASS = {14F10 (14F05 32C38)},
  MRNUMBER = {3502097},
MRREVIEWER = {Jen-Chieh Hsiao},
       DOI = {10.1007/s10240-015-0076-y},
       URL = {https://doi.org/10.1007/s10240-015-0076-y},
}

@article {Volpe,
    AUTHOR = {Volpe, Marco},
     TITLE = {The six operations in topology},
   JOURNAL = {J. Topol.},
  FJOURNAL = {Journal of Topology},
    VOLUME = {18},
      YEAR = {2025},
    NUMBER = {4},
     PAGES = {Paper No. e70050, 69},
      ISSN = {1753-8416,1753-8424},
   MRCLASS = {18F20 (18N60 54B40)},
  MRNUMBER = {4991460},
       DOI = {10.1112/topo.70050},
       URL = {https://doi.org/10.1112/topo.70050},
}

@book {KS,
    AUTHOR = {Kashiwara, Masaki and Schapira, Pierre},
     TITLE = {Sheaves on manifolds},
    SERIES = {Grundlehren der Mathematischen Wissenschaften [Fundamental
              Principles of Mathematical Sciences]},
    VOLUME = {292},
      NOTE = {With a chapter in French by Christian Houzel,
              Corrected reprint of the 1990 original},
 PUBLISHER = {Springer-Verlag, Berlin},
      YEAR = {1994},
     PAGES = {x+512},
      ISBN = {3-540-51861-4},
   MRCLASS = {58G07 (18F20 32C38 35A27)},
  MRNUMBER = {1299726},
}

@book {FOOO,
    AUTHOR = {Fukaya, Kenji and Oh, Yong-Geun and Ohta, Hiroshi and Ono,
              Kaoru},
     TITLE = {Lagrangian intersection {F}loer theory: anomaly and
              obstruction. {P}art {I}},
    SERIES = {AMS/IP Studies in Advanced Mathematics},
    VOLUME = {46},
 PUBLISHER = {American Mathematical Society, Providence, RI; International
              Press, Somerville, MA},
      YEAR = {2009},
     PAGES = {xii+396},
      ISBN = {978-0-8218-4836-4},
   MRCLASS = {53D40 (53D12 53D37)},
  MRNUMBER = {2553465},
MRREVIEWER = {Michael J. Usher},
}

@article{Guillermou,
  title={Quantization of exact Lagrangian submanifolds in a cotangent bundle, Lectures at the 2016 summer school ``Symplectic topology, Sheves, and Mirror Symmetry},
  author={St{\'e}phane Guillermou},
  journal={available at the author's webpage},
  year={2016}
}

@incollection {Tam,
    AUTHOR = {Tamarkin, Dmitry},
     TITLE = {Microlocal condition for non-displaceability},
 BOOKTITLE = {Algebraic and analytic microlocal analysis},
    SERIES = {Springer Proc. Math. Stat.},
    VOLUME = {269},
     PAGES = {99--223},
 PUBLISHER = {Springer, Cham},
      YEAR = {2018},
   MRCLASS = {53D37},
  MRNUMBER = {3903318},
       DOI = {10.1007/978-3-030-01588-6_3},
       URL = {https://doi.org/10.1007/978-3-030-01588-6_3},
}

@article {Viterbo,
    AUTHOR = {Claude Viterbo},
     TITLE = {An Introduction to Symplectic Topology
through
Sheaf theory},
   JOURNAL = {http://www.math.polytechnique.fr/cmat/viterbo/Eilenberg/Eilenberg.pdf},
  FJOURNAL = {},
    VOLUME = {},
      YEAR = {},
    NUMBER = {},
     PAGES = {},
      ISSN = {},
   MRCLASS = {},
  MRNUMBER = {},
MRREVIEWER = {},
       DOI = {},
       URL = {},
}

@article {AI,
    AUTHOR = {Asano, Tomohiro and Ike, Yuichi},
     TITLE = {Persistence-like distance on {T}amarkin's category and
              symplectic displacement energy},
   JOURNAL = {J. Symplectic Geom.},
  FJOURNAL = {The Journal of Symplectic Geometry},
    VOLUME = {18},
      YEAR = {2020},
    NUMBER = {3},
     PAGES = {613--649},
      ISSN = {1527-5256,1540-2347},
   MRCLASS = {53D35 (14F08 18F99 55N31)},
  MRNUMBER = {4142482},
MRREVIEWER = {Andrea\ D'Agnolo},
       DOI = {10.4310/JSG.2020.v18.n3.a1},
       URL = {https://doi.org/10.4310/JSG.2020.v18.n3.a1},
}

@misc{KSZ,
      title={On the Hochschild cohomology of Tamarkin categories}, 
      author={Christopher Kuo and Vivek Shende and Bingyu Zhang},
      note={arXiv: 2312.11447},
      archivePrefix={arXiv},
      primaryClass={math.SG},
      url={https://arxiv.org/abs/2312.11447}, 
}

\noindent
Tatsuki Kuwagaki\\
Department of Mathematics, Kyoto University \\
Kitashirakawa Oiwakecho, Sakyo-ku, Kyoto 606-8502, Japan\\
Email: {tatsuki.kuwagaki.a.gmail.com}\\
\end{document}